%

\documentclass{birkjour_t2}
\usepackage{subfigure}
\usepackage{multirow}

\usepackage{color}
\usepackage{hyperref}
\hypersetup{colorlinks=true,linkcolor=blue,citecolor=red} %

\newtheorem{prop}{Proposition}[section]
\newtheorem{remark}{Remark}[section]
\newtheorem{lemma}{Lemma}[section]
\newtheorem{theorem}{Theorem}[section]
\newtheorem{corollary}{Corollary}[section]
\raggedbottom

\def\vec#1{{\bf #1}}
\def\bm#1{{\bf #1}}
\def\pt{\partial}

\usepackage{lipsum}
\usepackage{bm}

\newcommand\blfootnote[1]{%
\begingroup
\renewcommand\thefootnote{}\footnote{#1}%
\addtocounter{footnote}{-1}%
\endgroup
}

\begin{document}

\markboth{}{}

%

%

\title[On physical-constraints-preserving
	schemes for special relativistic magnetohydrodynamics]
{On physical-constraints-preserving
	schemes for special\\ relativistic magnetohydrodynamics with a general equation of state}

\author[Kailiang Wu]{Kailiang Wu}


\address{Department of Mathematics,\\
	The Ohio State University, Columbus, OH 43210, USA\\
wu.3423@osu.edu}

\author[Huazhong Tang]{Huazhong Tang$^{\dagger}$
\blfootnote{$^{\dagger}$Corresponding author. Tel:~+86-10-62757018;
Fax:~+86-10-62751801.}}

\address{HEDPS, CAPT and LMAM, School of Mathematical Sciences,\\
	Peking
	University, Beijing 100871, P.R. China\\
School of Mathematics and Computational Science,\\
Xiangtan University, Xiangtan 411105,\\
Hunan Province, P.R. China\\
hztang@math.pku.edu.cn}

\maketitle


\begin{abstract}
The paper studies the physical-constraints-preserving  (PCP) schemes
for multidimensional special relativistic magnetohydrodynamics 
with a general equation of state (EOS) on more general
meshes.
It is an extension of the work (Ref.\ \cite{WuTangM3AS2017})
which focuses on the ideal EOS and uniform Cartesian meshes.
The general EOS without a special expression poses some additional difficulties
in
%
discussing the mathematical  properties of  admissible state set with
the physical constraints on
the fluid velocity, density and pressure.
Rigorous analyses are provided for the PCP property of finite volume or discontinuous Galerkin schemes with  the  Lax-Friedrichs (LxF) type flux on a general mesh with non-self-intersecting polytopes. Those are built on a more general form of generalized
LxF splitting property and a different convex decomposition technique.
It is shown in theory that the PCP property is closely connected with a discrete divergence-free condition, which is proposed on the general mesh and milder than that in Ref.\ \cite{WuTangM3AS2017}.

\noindent\keywords{{\bf Keywords}: Relativistic magnetohydrodynamics; equation of state;  physical-constraints-preserving schemes; admissible state set; convexity;
generalized Lax-Friedrichs splitting; discrete divergence-free condition.}

\noindent{{\bf AMS Subject Classification}: 65N30, 76M10, 76Y05}

\end{abstract}

\section{Introduction}

The paper is concerned with physical-constraints-preserving (PCP) numerical methods for the special relativistic magnetohydrodynamics (RMHD). The  governing equations of $d$-dimensional special RMHDs in the   laboratory frame   can be written in the divergence form
\begin{equation}\label{eq:RMHD1D}
\frac{{\partial \vec U}}{{\partial t}} +
\sum_{i=1}^d {\frac{{\partial {\vec F_i}(\vec U)}}{{\partial x_i}}}  = {\bf 0},
\end{equation}
together with the divergence-free condition on the magnetic field
$\vec B=(B_1,B_2,B_3)$, i.e.,
\begin{equation}\label{eq:2D:BxBy0}
\sum_{i=1}^d\frac{\partial B_i } {\partial x_i}=0,
\end{equation}
where the conservative vector $\vec U = \big( D,\vec m,\vec B,E \big)^{\top}$  and the flux in the $x_i$-direction $\vec F_i(\vec U)$ is defined by
\begin{equation*}
\vec F_i(\vec U) = \bigg( D v_i,  v_i \vec m  -  B_i \Big( W^{-2} \vec B + (\vec v \cdot \vec B) \vec v \Big)  + p_{tot}  \vec e_i, v_i \vec B - B_i \vec v  ,m_i \bigg)^{\top},~i=1,\cdots,d,
\end{equation*}
with the mass density $D = \rho W$, the momentum density (row) vector $\vec m = \rho h{W^2}\vec v + |\vec B{|^2} \vec v - (\vec v \cdot \vec B)\vec B$, the energy density $E=\rho h W^2 - p_{tot} +|\vec B|^2$, and
the row vector $\vec e_i$ denoting the $i$-th row of the unit matrix of size $3$.
Here $\rho$ is the rest-mass density, $\vec v=(v_1,v_2,v_3)$ denotes
the fluid velocity vector,  $p_{tot}$ is the total pressure containing the gas pressure $p$ and  magnetic pressure $p_m:=\frac12 \left(W^{-2} |\vec B|^2 +(\vec v \cdot \vec B)^2 \right)$,
$W=1/\sqrt{1- v^2}$ is the Lorentz factor with $v:=\left(v_1^2+v_2^2+v_3^2\right)^{1/2}$,
$h$ is the specific enthalpy defined by
$$h = 1 + e + \frac{p}{\rho},$$
with units in which the speed of light $c$ is equal to one, and $e$ is the specific internal energy.
It can be seen that the  variables
$\vec m$ and $E$ depend on the magnetic field $\vec B$  nonlinearly.
Note that similar to the study in the literature, the $d$-dimensional discussion in this paper
focuses on the general setup with ${\bf v}, {\bf B} \in {\mathbb R}^3$ instead of ${\mathbb R}^d$.

The system \eqref{eq:RMHD1D} {is closed} mathematically,
only if an additional thermodynamic equation relating state variables,
i.e., the so-called equation of
state (EOS), is given. A general EOS may be expressed as
\begin{equation}
\label{eq:EOS:h}
h = h(p,\rho)= 1+e(p,\rho) + p/\rho,
\end{equation}
which will be discussed in detail in the next section.
A simple example is  the ideal EOS
\begin{equation}\label{eq:iEOS}
h = 1 + \frac{\Gamma p }{(\Gamma -1)\rho},
\end{equation}
where the adiabatic index $\Gamma \in (1,2]$.
The system \eqref{eq:RMHD1D} takes into account the relativistic description for the dynamics of electrically-conducting
fluid (plasma) at nearly speed of light in vacuum in the presence of magnetic fields. The relativistic magneto-fluid flow appears in investigating numerous astrophysical phenomena from stellar to galactic scales, e.g., core collapse super-novae, coalescing neutron stars, X-ray binaries, active galactic nuclei, formation of black holes, super-luminal jets and gamma-ray bursts etc.
However, due to the relativistic effect, especially the appearance of Lorentz factor, the system \eqref{eq:RMHD1D}   involves strong
nonlinearity, making its analytic treatment extremely difficult. Numerical simulation is a primary and powerful approach to improve our understanding of the physical
mechanisms in the RMHDs. In comparison with the non-relativistic MHD case, the numerical difficulties mainly come from highly nonlinear coupling between the RMHD equations in \eqref{eq:RMHD1D}, which leads to no explicit expression of the primitive variables $(\rho,\vec v,p) $ and the flux $\vec F_i$ in terms of $\vec U$.

Since nearly 2000s, numerical study of the RMHDs has attracted
considerable attention, and various modern shock-capturing methods have been developed for the RMHD equations. They include but are not limited to: the Godunov-type scheme based on  the linear Riemann solver \cite{GodunovRMHD}, the total variation diminishing scheme \cite{Balsara2001},
the third-order accurate central-type scheme based on two-speed approximate Riemann solver \cite{Zanna:2003}, the high-order kinetic flux-splitting method \cite{Qamar2005},
the exact Riemann solver \cite{Giacomazzo2006},
the HLLC (Harten-Lax-van Leer-contact) type schemes \cite{Honkkila:2007,Kim2014,MignoneHLLCRMHD},
the adaptive methods with mesh refinement \cite{Anderson:2006,Host:2008},
the adaptive moving mesh method \cite{HeTang2012RMHD},
the locally divergence-free Runge-Kutta discontinuous Galerkin (RKDG) method and
exactly divergence-free central RKDG method with the weighted essentially non-oscillatory ({WENO}) limiters \cite{ZhaoTang2016},
the ADER (Arbitrary high order schemes using DERivatives) DG method \cite{Zanotti2015},  and the ADER-WENO type schemes with subluminal reconstruction \cite{BalsaraKim2016}, etc. The readers are also referred to the
early review articles \cite{font2008,Marti2015}. Besides the
standard difficulty in solving the nonlinear hyperbolic systems,
an additional numerical challenge for the RMHD system \eqref{eq:RMHD1D} comes from
the divergence-free condition \eqref{eq:2D:BxBy0}.
Numerically preserving  \eqref{eq:2D:BxBy0}
is very non-trivial (for $d\ge 2$) but important for the robustness of numerical scheme,
and has to be respected.
In physics, numerically incorrect magnetic field topologies may lead to nonphysical plasma transport orthogonal to the magnetic field, see e.g.,  \cite{Brackbill1980}.
The condition \eqref{eq:2D:BxBy0} is also very crucial for the stability of induction equation \cite{Yang2016,Balsara2017}.
Existing numerical experiments in the non-relativistic MHD case indicated
that violating the divergence-free condition of magnetic field
may lead to numerical instability and nonphysical or inadmissible solutions \cite{Brackbill1980,Balsara2004,Rossmanith2006,Balsara2012}.
Up to now, many numerical treatments have been proposed to reduce such risk, see e.g.,  \cite{Evans1988,Toth2000,Balsara2004,Li2005,Balsara2009,Li2011,ZhaoTang2016,WuShu2018} and references therein.

The physically meaningful and admissible solutions of the RMHD system \eqref{eq:RMHD1D}
must satisfy the constraints such as $\rho>0$, $p >0$ and $v < c=1$ etc.
However, most of existing RMHD schemes do not always preserve those constraints,
even though they have been used to simulate some RMHD flows successfully.
There exists a large and long-standing risk of failure when a numerical scheme is applied to the RMHD problems with large Lorentz factor, low density or pressure, or strong discontinuity. This is
because once the negative density or pressure, or the superluminal fluid velocity is obtained, the eigenvalue of  Jacobian matrix or Lorentz factor become
imaginary, causing ill-posedness of the discrete problem and the break down of the codes.
Therefore, it is highly desirable to design
physical-constraints-preserving (PCP) numerical schemes, in the
sense of that the solutions of PCP schemes always
belong to the set of physically admissible states
\begin{equation}\label{eq:RMHD:definitionG}
{\mathcal G} := \left\{ \left. \vec U=(D,\vec m,\vec B,E)^{\top}\in {\mathbb R^{8}}~ \right| ~\rho(\vec U)>0,\ p(\vec U)>0,\ v(\vec U) <c = 1  \right\}.
\end{equation}
Because the functions $\rho(\vec U)$, $p(\vec U)$ and $v(\vec U)$ in \eqref{eq:RMHD:definitionG}   and $\vec F_i(\vec U)$ are highly nonlinear and cannot be explicitly formulated  in terms of $\vec U$,
it is extremely difficult to check  whether a given state $\vec U$ is admissible,
or  a numerical scheme is PCP.
For such a reason, developing the
PCP schemes for the RMHDs is highly challenging.

Recent years have witnessed some advances in
developing bound-preserving high-order accurate schemes for hyperbolic conservation laws.
Those schemes are mainly built on two types of limiting procedures.
One is the simple scaling limiting procedure
for the reconstructed or evolved solution polynomials in
a finite volume or discontinuous Galerkin (DG) method,
see e.g., \cite{zhang2010,zhang2010b,Xing2010,zhang2012,cheng,Zhang2017}.
Such a limiter has been shown to maintain the high-oder accuracy, see \cite{zhang2010,zhang2010b,Zhang2017}.
Another is the flux-corrected limiting procedure,
which can be used to high-order finite difference, finite volume
and DG methods, see e.g.,  \cite{Xu_MC2013,Hu2013,Liang2014,XiongQiuXu2014,Christlieb}.
A survey of the maximum-principle-satisfying or positivity-preserving
high-order schemes based on the first type limiter was presented in Ref.\ \cite{zhang2011b}.
The readers are also referred to Ref.\ \cite{xuzhang2016} for a review of
those two approaches.
The first work on PCP methods for relativistic hydrodynamics (RHD) was made in Ref.\ \cite{WuTang2015},
where the Lax-Friedrichs (LxF) scheme was rigorously proved to be PCP and
the PCP high-order accurate finite
difference WENO schemes {were} developed.
The bound-preserving {DG} methods were later extended from the non-relativistic
case \cite{zhang2010b} to the ideal special RHD case in Ref.\ \cite{Qin2016}.
More recently, the PCP high-order accurate central DG methods were proposed in Ref.\ \cite{WuTang2017ApJS} for the special RHDs with a general EOS \eqref{eq:EOS:h}.
Extension of the PCP methods from special to general RHDs is very nontrivial.
An earlier work \cite{Radice2014} attempted to construct the PCP scheme for the general RHDs, but only enforced the density
positivity. The importance and difficulty of designing completely PCP schemes were mentioned in Refs. \cite{Rezzolla2013,Radice2014}.
Very recently, the frameworks of designing provably PCP high-order accurate finite difference, finite volume and DG methods were established in Ref.\ \cite{Wu2017} for the general RHDs with a general EOS.
There was no work theoretically showed the PCP property of any numerical scheme of RMHDs
until the recent 
breakthrough in Ref.\ \cite{WuTangM3AS2017}.
With the sophisticated analysis on skillfully mining
the important mathematical properties of admissible state set,
the work \cite{WuTangM3AS2017} first developed several one- and two-dimensional PCP schemes
for RMHDs with the ideal EOS \eqref{eq:iEOS}, and  also revealed in theory for the first time that
the discrete divergence-free condition  is closely connected with
the PCP property of RMHD schemes.
In fact, it was also a blank in developing {\em provably} positive high-order schemes for the non-relativistic ideal compressible MHDs until the recent path-breaking work \cite{Wu2017b,WuShu2018}.
It is also noticed that,
for the incompressible
flow system in the vorticity-stream function formulation,
there is also a
divergence-free condition (but) on
fluid velocity, i.e., the incompressibility condition,  	
which is crucial
in designing
schemes that satisfies the
maximum principle of vorticity,
see e.g., \cite{zhang2010,LiXieZhang}.
An important difference in our RMHD case is that
our divergence-free quantity
(the magnetic field)
is also nonlinearly related to defining the admissible states, see \eqref{eq:RMHD:definitionG2}.

The ideal gas EOS with a constant adiabatic index is a poor approximation for most relativistic astrophysical flows, although it is commonly used in the RHDs and RMHDs.
The aim of this paper is to extend the theoretical analysis in Ref.\ \cite{WuTangM3AS2017}
to numerical schemes for the multi-dimensional RMHDs with the general EOS  \eqref{eq:EOS:h} on more general meshes (with non-self-intersecting polytopes).
Another purpose is to propose
a discrete divergence-free condition for such general case which is critical
for designing the PCP schemes.
In the case of a general EOS, one needs to
analytically handle the function $h(p,\rho)$ without a specific expression.
This introduces additional nonlinearity into the problem and poses additional difficulties
in studying the admissible state set. Moreover,
conducting the PCP analysis on a general mesh is also nontrivial and more complicated,
in comparison with that on the uniform Cartesian meshes considered in Ref.\ \cite{WuTangM3AS2017}.

The 
paper is organized as follows. Section \ref{sec:eqDef} extends
the properties of  $\mathcal G$ in Ref.\ \cite{WuTangM3AS2017}  for the general EOS \eqref{eq:EOS:h},
including two equivalent definitions of $\mathcal G$,
its convexity and  generalized LxF splitting properties. They play pivotal roles in analyzing the PCP property of  numerical methods with the LxF type flux for the RMHD equations \eqref{eq:RMHD1D}, see  Section \ref{sec:scheme},
where the PCP properties of multi-dimensional first- and high-order accurate schemes
are analyzed on a general mesh.
Section \ref{sec:con} concludes the paper with several remarks.

\section{Properties of admissible state set for a general EOS}\label{sec:eqDef}

This section studies the properties of admissible state set $\mathcal G$ for a general EOS \eqref{eq:EOS:h}.
%

\subsection{Equation of state}

The function $h(p,\rho)$ in \eqref{eq:EOS:h} must satisfy
\begin{equation}\label{eq:hcondition1}
h(p,\rho) \ge \sqrt{1+p^2/\rho^2}+p/\rho,
\end{equation}
as revealed by the relativistic kinetic theory \cite{WuTang2017ApJS}.

The properties of the admissible state set $\mathcal G$ are established in Ref.\ \cite{WuTangM3AS2017}
for the ideal EOS \eqref{eq:iEOS}.
Those properties can be extended to the case of a general EOS \eqref{eq:EOS:h} under some reasonable assumptions. In the following, we will show such extension and omit the derivations
that are the same as those for the ideal EOS case.

The paper focuses on the causal EOS and also assume that the fluid's coefficient of thermal expansion is positive, which is valid
for  most of compressible fluids, e.g., the gases.
If assume $h(p,\rho)$
is differentiable in $\mathbb R^+\times \mathbb R^+$, then the inequality
\begin{equation}\label{eq:gEOSC}
h\left(\frac1{\rho} - \frac{\pt h(p,\rho)}{\pt p} \right) < \frac{\pt h(p,\rho)}{\pt \rho} < 0,
\end{equation}
holds \cite{WuTang2017ApJS}.

The general EOS \eqref{eq:EOS:h} can also be expressed as
\begin{equation}
\label{eq:EOS:p:rhoh}
p = p(\rho,h),
\end{equation}
then the inequalities \eqref{eq:hcondition1} and \eqref{eq:gEOSC} respectively become
\begin{equation}\label{eq:hcondition11}
p(\rho,h) \le \frac{h^2-1}{2h} \rho,
\end{equation}
and
\begin{equation}\label{eq:gEOSd}
h \left( \frac1{\rho} \frac{\pt p(\rho,h)}{\pt h}  -1 \right) < - \frac{\pt p(\rho,h)}{\pt \rho} < 0, \quad \frac{\pt p(\rho,h)}{\pt h} >0.
\end{equation}
{The inequality \eqref{eq:gEOSC}
	or \eqref{eq:gEOSd} will only be used in Lemma  \ref{theo:RMHD:fUincrease}.}

Besides the conditions \eqref{eq:hcondition1} and \eqref{eq:gEOSC} or \eqref{eq:EOS:p:rhoh}, the paper also assumes that
\begin{equation} \label{eq:hpto1}
\mathop{\lim }\limits_{p \to 0^+ }  h( p,\rho) = 1,
\end{equation}
or equivalently,
\begin{equation}\label{eq:epto0}
\mathop{\lim }\limits_{h \to 1^+ } p(\rho,h)=0,
\end{equation}
for any fixed positive $\rho$. The above conditions  \eqref{eq:hcondition1}, \eqref{eq:gEOSC} and \eqref{eq:hpto1} are reasonable because they are satisfied by the ideal EOS \eqref{eq:iEOS} and most of the other EOS reported
in the numerical RHDs, see e.g.,  \cite{Mathews,Mignoneetal:2005,Ryu,WuTang2017ApJS}.

\begin{remark}
	The conditions \eqref{eq:gEOSC} and \eqref{eq:hpto1} imply
	\begin{equation*}
	\frac{\pt e(p,\rho)}{\pt p}>0,\quad \mathop{\lim }\limits_{p \to 0^+ }  e( p,\rho) = 0,
	\end{equation*}
	which further yield
	\begin{equation*}
	e(p,\rho) > \mathop{\lim }\limits_{ \delta_p \to 0^+ }  e( \delta_p,\rho) =0,
	\end{equation*}
	for any $p,\rho \in {\mathbb{R}}^+$. Therefore, the positivity of $e$ can be guaranteed if $p>0$ and $\rho>0$, and thus the physical constraints in \eqref{eq:RMHD:definitionG} do not need to include $e(\vec U)>0$ and $h({\bf U})>1$.
\end{remark}

\subsection{Nonlinearity and challenges}

The main challenges in studying $\mathcal G$ and the PCP property of numerical schemes   come from the intrinsic complexity and nonlinearity of \eqref{eq:RMHD1D}.
Especially the inherent strong nonlinearity is contained in several constraints in \eqref{eq:RMHD:definitionG}, because there is
no explicit expression of {$\rho(\vec U), p(\vec U)$, and $\vec v(\vec U)$ for the RMHDs,
	even for the ideal EOS  \eqref{eq:iEOS}.

	In practice, the values of $(\rho,p,{\bf v})$ should be derived from the given value of  $\vec U$} by solving
some  nonlinear algebraic equation, see e.g.,  \cite{Balsara2001,Zanna:2003,GodunovRMHD,MignoneHLLCRMHD,Newman,Noble}.
The present paper considers the following nonlinear algebraic equation (consistent with the one used in Ref.\ \cite{MignoneHLLCRMHD} for the ideal EOS)
\begin{equation}\label{eq:RMHD:fU(xi)}
f_{ \bf U}(\xi ) := \xi  - p\left( \frac{D}{W}, \frac{\xi}{D W} \right)  + {\left| \vec B \right|^2} - \frac{1}{2}\left[ {\frac{{{{\left| \vec B \right|}^2}}}{{{{W}^2}}} + \frac{{{{(\vec m \cdot \vec B)}^2}}}{{{\xi ^2}}}} \right] - E = 0,
\end{equation}
for the unknown $\xi\in \mathbb R^+$, where $p$ denotes the function $p(\rho,h)$ in \eqref{eq:EOS:p:rhoh}, and
the Lorentz factor $W$   has been expressed as a function
of $\xi$  by
\begin{equation}\label{eq:Wxi}
W(\xi) = \left(  {\xi^{-2}} {{(\xi + {|\vec B|^2})}^{-2}}
f_{\Omega}(\xi)\right)^{ - {1}/{2} },
\end{equation}
{with}
\begin{equation}\label{eq:Wxi-zzzzz}
f_{\Omega}(\xi):={\xi^2}{{(\xi + {|\vec B|^2})}^2} - \left[ {\xi^2}{|\vec m|^2} + (2\xi+{|\vec B|^2})  {{(\vec m \cdot \vec B)}^2} \right].
\end{equation}
It is reasonable to find the solution of  \eqref{eq:RMHD:fU(xi)} within the
interval
\begin{equation}\label{eq:Wxi-zzzzz000000000000}
\Omega_f:=\mathbb{R}^+ \cap
\left\{ {\left. \xi \right| f_{\Omega}(\xi) >0} \right\},
\end{equation}
otherwise,
$f_{\Omega}(\xi)\leq 0$ such that  $W(\xi)$ takes the value of  0 or  the imaginary number.
If denote  the solution of the equation \eqref{eq:RMHD:fU(xi)}  by $\xi_*=\xi_*(\vec U)$,
then $\xi_*=\rho(\vec U) h(\vec U)  W^2(\xi_*)=\rho(\vec U) h(\vec U) /\left(1-v^2(\vec U)\right)$,
and the values of the primitive variables $\rho(\vec U)$, $p(\vec U)$, and $v(\vec U)$ in \eqref{eq:RMHD:definitionG} can be calculated by
\begin{align}\label{eq:RMHD:getv}
&
\vec v(\vec U) = \left( {\vec m + \xi_* ^{ - 1}(\vec m \cdot \vec B) \vec B} \right)/(\xi_*+ {|\vec B|^2}),\\
&
\label{eq:RMHD:getrho}
\rho (\vec U) = \frac{D}{{W(\xi_*)}},\quad h(\vec U) = \frac{\xi_*}{D W(\xi_*)} , \\
\label{eq:RMHD:getp}
&
p(\vec U) = p\left( \rho (\vec U), h(\vec U) \right) .
\end{align}
The above procedure  clearly shows the strong nonlinearity of the functions
{$\vec v(\vec U)$, $\rho(\vec U)$}, and $p(\vec U)$,
as well as  the  challenges in verifying whether  $\vec U$ is in the set
$\mathcal G$.
As it is seen from \eqref{eq:RMHD:fU(xi)},
such nonlinearity is much stronger for a general EOS
in comparison with the ideal EOS.
Moreover, one needs to handle the function
$p(\rho,h)$ without a specific expression, leading to some difficulties
different from those in the ideal EOS case \eqref{eq:iEOS}.
To overcome the above challenges, 
two equivalent definitions of the admissible state set $\mathcal G$
will be given in the following.
The  first  is very suitable to check whether a given state $\vec U$  is admissible and construct
the PCP limiter for the developing high-order accurate robust schemes for the RMHDs,
while the second is very effective in verifying the PCP property of a numerical scheme.

\subsection{First equivalent definition}\label{sec:first}

This subsection introduces the first equivalent definition of the admissible state set $\mathcal G$.

\begin{lemma}\label{theo:RMHD:condition}
	The admissible state $\vec U=(D,\vec m,\vec B,E)^{\top} \in {\mathcal G}$ must satisfy
	\begin{equation}\label{eq:GconToG2}
	D>0,\quad  q(\vec U):= E-\sqrt{D^2+|\vec m|^2}>0.
	\end{equation}
\end{lemma}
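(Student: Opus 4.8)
The plan is to check the two conditions in \eqref{eq:GconToG2} directly from the definitions of $D,\vec m,E$ in terms of the primitive quantities $(\rho,\vec v,p)$, using the EOS constraint \eqref{eq:hcondition1}. The bound $D=\rho W>0$ is immediate from $\rho>0$ and $W\ge1$. The real content is $q(\vec U)=E-\sqrt{D^2+|\vec m|^2}>0$. Since $\sqrt{D^2+|\vec m|^2}\ge0$, it suffices to prove the two facts (i) $E^2>D^2+|\vec m|^2$ and (ii) $E>0$; together they give $E>\sqrt{D^2+|\vec m|^2}$.

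For (i), I would write $\mathcal E:=\rho hW^2$ and $b:=\vec v\cdot\vec B$, so that $\vec m=(\mathcal E+|\vec B|^2)\vec v-b\vec B$ and $E=\mathcal E+|\vec B|^2-p-p_m$ with $p_m=\frac12(W^{-2}|\vec B|^2+b^2)$, and then expand $E^2-D^2-|\vec m|^2$ using only the elementary identities $W^{-2}=1-v^2$ and $W^2v^2=W^2-1$. After cancelling the $\rho^2W^2$ contributions I expect to reach
\begin{equation*}
E^2-D^2-|\vec m|^2=\rho^2W^2(h^2-1)+2\rho h|\vec B|^2+|\vec B|^4W^{-2}-2(\mathcal E+|\vec B|^2)(p+p_m)+(p+p_m)^2+b^2(2\mathcal E+|\vec B|^2).
\end{equation*}
Now \eqref{eq:hcondition1} gives, after squaring, $\rho^2(h^2-1)\ge 2\rho hp$, and also $\rho h\ge 2p$ (since $\sqrt{1+p^2/\rho^2}\ge p/\rho$). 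Substituting the first inequality and simplifying — the magnetic cross-terms largely telescope once one uses $2p_m=W^{-2}|\vec B|^2+b^2$ — I expect the estimate to reduce to
\begin{equation*}
E^2-D^2-|\vec m|^2\ \ge\ 2\mathcal E\,p_m-2|\vec B|^2p+(p+p_m)^2.
\end{equation*}
Finally $2\mathcal E p_m\ge \mathcal E W^{-2}|\vec B|^2=\rho h|\vec B|^2\ge 2p|\vec B|^2$, so the first two terms are $\ge0$ and hence $E^2-D^2-|\vec m|^2\ge(p+p_m)^2\ge p^2>0$.

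For (ii), I would note that $b^2\le v^2|\vec B|^2$ gives $p_m=\frac12(W^{-2}|\vec B|^2+b^2)\le\frac12|\vec B|^2$, so $|\vec B|^2-p_m\ge0$ and therefore $E=(\rho hW^2-p)+(|\vec B|^2-p_m)\ge \rho hW^2-p\ge \rho h-p\ge 2p-p=p>0$, again by $\rho h\ge2p$. Combining (i) and (ii) yields $q(\vec U)>0$, finishing the proof. The step I expect to be delicate is the expansion and simplification in (i): it involves a fair amount of bookkeeping with the magnetic terms, and it is not obvious beforehand that the relatively weak EOS bound \eqref{eq:hcondition1}, which is all one has for a general EOS, is enough — the decisive observation is that its two consequences, $\rho h\ge2p$ and the pointwise inequality $2p_m\ge W^{-2}|\vec B|^2$, together cancel precisely the one negative term $-2|\vec B|^2p$ remaining after simplification.
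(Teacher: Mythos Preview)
Your proposal is correct. The algebra in step (i) checks out: the displayed identity for $E^2-D^2-|\vec m|^2$ is right, and after substituting $\rho^2(h^2-1)\ge 2\rho hp$ and using $2p_m=W^{-2}|\vec B|^2+b^2$ together with $2\mathcal E p_m=\rho h|\vec B|^2+\mathcal E b^2$, the terms cancel exactly as you claim to leave $2\mathcal E p_m-2|\vec B|^2p+(p+p_m)^2$, and then $\rho h\ge 2p$ finishes it. Step (ii) is fine as written.

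The paper's own proof is only a sketch: it cites the ideal-EOS argument in Ref.~\cite{WuTangM3AS2017} and highlights that the single ingredient to be replaced is the RHD-type inequality $(\rho hW^2-p)^2>|\rho hW^2\vec v|^2+(\rho W)^2$, which for a general EOS was established in Ref.~\cite{WuTang2017ApJS} under \eqref{eq:hcondition1}. In that approach one essentially isolates the purely hydrodynamic block and then shows the magnetic remainder is nonnegative. Your route is a fully expanded, self-contained version that does not separate the RHD piece but instead treats all terms at once; the decisive inputs --- the two consequences $\rho^2(h^2-1)\ge 2\rho hp$ and $\rho h\ge 2p$ of \eqref{eq:hcondition1} --- are the same. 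Your version has the advantage of being self-contained and making visible exactly where \eqref{eq:hcondition1} enters; the paper's version has the advantage of modularity, reusing the RHD result as a lemma.
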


\begin{proof}
	The proof is the same as that of Lemma 2.1 in Ref.\ \cite{WuTangM3AS2017} for the ideal EOS, except for
	using the condition
	\eqref{eq:hcondition1} instead of $\Gamma \in (1,2]$.
	Note that the following inequality is used
	$$
	(\rho h W^2 - p)^2 > |\rho h W^2 v|^2 + (\rho W)^2,
	$$
	which corresponds to $q(\vec U)>0$ in the RHD case and has been
	proved in Ref.\ \cite{WuTang2015} for ideal EOS and Ref.\ \cite{WuTang2017ApJS} for the general EOS under the condition \eqref{eq:hcondition1}.
	
\end{proof}

\begin{lemma}\label{theo:RMHD:CYcondition}
	$\vec U=(D,\vec m,\vec B,E)^{\top} \in {\mathcal G}$ if and only if
	$f_{\bf U}(\xi)$ has   unique {zero $\xi_*(\vec U)$ in $ \Omega_f$ and satisfies}
	\begin{equation}\label{eq:fourC}
	D>0,~q(\vec U) > 0,~ \xi_*(\vec U) >0,~  f_4 ( \xi_*(\vec U)  ) > 0,
	\end{equation}
	{where
		$f_4(\xi)$} is a
	quartic
	polynomial defined by
	\begin{equation}\label{eq:f4:xi}
	f_4(\xi):=  f_\Omega(\xi)  -{D^2}{({\xi} + {|\vec B|^2})^2} = (\xi + |\vec B|^2)^2 \left( \xi^2 W^{-2} (\xi) - D^2 \right).
	\end{equation}
\end{lemma}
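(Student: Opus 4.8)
The plan is to establish the equivalence in two directions, using the variable $\xi = \rho h W^2$ as the bridge between the primitive description $(\rho, \mathbf v, p)$ and the conservative vector $\mathbf U$. First I would observe that, by the derivation recalled around equations \eqref{eq:RMHD:fU(xi)}--\eqref{eq:RMHD:getp}, the equation $f_{\mathbf U}(\xi) = 0$ is nothing but a reformulation of the defining relations $E = \rho h W^2 - p_{tot} + |\mathbf B|^2$, $\mathbf m = \rho h W^2 \mathbf v + |\mathbf B|^2 \mathbf v - (\mathbf v\cdot\mathbf B)\mathbf B$, $D = \rho W$ once $\rho, h, \mathbf v$ are expressed through $\xi$ and the (fixed) conserved quantities $D, \mathbf m, \mathbf B$; the expression \eqref{eq:Wxi} for $W(\xi)$ comes precisely from solving $\mathbf m = (\xi + |\mathbf B|^2)\mathbf v - (\mathbf v\cdot\mathbf B)\mathbf B$ for $v^2$. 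So for $\mathbf U$ with $D>0$, a physical primitive reconstruction exists iff $f_{\mathbf U}$ has a root $\xi_*$ in the region $\Omega_f$ where $W(\xi)$ is real and positive, and then $\rho(\mathbf U), \mathbf v(\mathbf U), p(\mathbf U)$ are given by \eqref{eq:RMHD:getv}--\eqref{eq:RMHD:getp}. Uniqueness of the root is tied to the monotonicity of $f_{\mathbf U}$, which the paper evidently intends to supply via the EOS hypothesis \eqref{eq:gEOSd} in Lemma \ref{theo:RMHD:fUincrease}; for the purposes of this lemma I would either cite that monotonicity or simply carry "unique zero in $\Omega_f$" as part of the hypothesis on both sides of the iff, as the statement is phrased.

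For the forward direction, suppose $\mathbf U \in \mathcal G$, i.e. $\rho(\mathbf U)>0$, $p(\mathbf U)>0$, $v(\mathbf U)<1$. By Lemma \ref{theo:RMHD:condition} we already have $D>0$ and $q(\mathbf U) = E - \sqrt{D^2 + |\mathbf m|^2} > 0$, which gives the first two inequalities of \eqref{eq:fourC}. Since $\rho>0$ and $h>1$ (from the Remark, as $p>0, \rho>0$) and $W^2 = 1/(1-v^2) \ge 1$ is finite, $\xi_* = \rho h W^2 > 0$, giving the third. For the fourth, I would use the identity in \eqref{eq:f4:xi}: $f_4(\xi_*) = (\xi_* + |\mathbf B|^2)^2\big(\xi_*^2 W^{-2}(\xi_*) - D^2\big)$. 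Now $\xi_*^2 W^{-2} = (\rho h W^2)^2 W^{-2} = \rho^2 h^2 W^2$ and $D^2 = \rho^2 W^2$, so $\xi_*^2 W^{-2} - D^2 = \rho^2 W^2 (h^2 - 1) > 0$ because $h>1$; since $\xi_* + |\mathbf B|^2 > 0$, we conclude $f_4(\xi_*)>0$. Thus all four conditions in \eqref{eq:fourC} hold, and $\xi_*$ being \emph{the} root in $\Omega_f$ follows from the reconstruction together with the monotonicity of $f_{\mathbf U}$.

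For the converse, assume $f_{\mathbf U}$ has a unique zero $\xi_* \in \Omega_f$ and that \eqref{eq:fourC} holds. Because $\xi_* \in \Omega_f$ we have $f_\Omega(\xi_*)>0$, so $W(\xi_*)$ is real, positive and finite, hence $v(\mathbf U) < 1$; combined with $D>0$ and $\xi_*>0$ we get $\rho(\mathbf U) = D/W(\xi_*) > 0$ and $h(\mathbf U) = \xi_*/(D W(\xi_*)) > 0$. Running \eqref{eq:f4:xi} backwards, $f_4(\xi_*)>0$ forces $\xi_*^2 W^{-2}(\xi_*) - D^2 > 0$, i.e. $\rho^2 W^2(h^2-1) > 0$, hence $h(\mathbf U) > 1$; then by the EOS monotonicity \eqref{eq:gEOSd} (specifically $\partial p/\partial h > 0$) together with the limit \eqref{eq:epto0}, $p(\mathbf U) = p(\rho(\mathbf U), h(\mathbf U)) > \lim_{h\to 1^+} p(\rho(\mathbf U), h) = 0$. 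So $\rho(\mathbf U)>0$, $p(\mathbf U)>0$, $v(\mathbf U)<1$, i.e. $\mathbf U \in \mathcal G$. I expect the main obstacle to be the bookkeeping around $\Omega_f$ and uniqueness of the root — one must be careful that the reconstruction map is genuinely consistent (that the $\xi$ produced from a physical state lands in $\Omega_f$, and conversely that a root in $\Omega_f$ yields a bona fide solution of the original algebraic relations and not a spurious branch), and that the role of $q(\mathbf U)>0$ as an extra necessary condition is correctly threaded through; the algebraic manipulations with $f_4$ and $W(\xi)$ are, by contrast, routine once \eqref{eq:f4:xi} is in hand.
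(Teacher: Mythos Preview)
Your proposal is correct and follows essentially the same route as the paper's proof: both directions hinge on Lemma~\ref{theo:RMHD:condition} for $D>0$ and $q(\vec U)>0$, the identity $\xi_*^2 W^{-2}(\xi_*)-D^2=\rho^2 W^2(h^2-1)$ to translate $f_4(\xi_*)>0$ into $h>1$, and the EOS conditions \eqref{eq:gEOSd}--\eqref{eq:epto0} to recover $p>0$ from $h>1$. The only cosmetic difference is that in the converse direction the paper notes $f_\Omega(\xi_*)>f_4(\xi_*)>0$ rather than invoking $\xi_*\in\Omega_f$ directly, but this is the same fact read two ways.
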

\begin{proof}
	(i). Assume $\vec U \in {\mathcal G}$.
	Lemma \ref{theo:RMHD:condition} shows that the first two inequalities in \eqref{eq:fourC} hold.
	Because $\rho(\vec U)>0,p(\vec U)>0$ and $v(\vec U)<1$, one has
	$$\xi_*  =\rho hW^2 = \frac{\rho(\vec U) h (\vec U)}{1-v^2(\vec U)} \overset{\eqref{eq:hcondition1}}{\ge}  \frac{ \sqrt{\rho^2(\vec U)+p^2(\vec U)}+p (\vec U) }{1-v^2(\vec U)}  >0. $$
	On the other hand,  because $1< 1+{p(\vec U)}/{\rho (\vec U)} < h(\vec U) = \xi_*/ \big( D W(\xi_*) \big)$ and $v<1$, one has
	$\xi_* > D W(\xi_*) $,
	which implies $f_4 ( \xi_*  ) > 0$.

	(ii). Assume that the four inequalities in \eqref{eq:fourC} hold.
	Because  $D>0$ and  $\xi_*>0 $,
	one has
	$$ f_\Omega ( \xi_*  )  >f_\Omega(\xi_*)  -{D^2}{({\xi_*} + {|\vec B|^2})^2} = f_4 ( \xi_*  )>0 ,$$
	which implies
	$$
	W^{-2}=1 - v^2(\vec U) = \frac{  f_\Omega ( \xi_* ) } {\xi^2_*(\xi_* + |\vec B|^2)^2} > 0.
	$$
	Thus $v(\vec U)<1$ and $W(\xi_*)\ge 1$.
	Using \eqref{eq:RMHD:getrho} and $D>0$, one has $\rho(\vec U) = D/W(\xi_*) > 0$. Note that
	$$
	\xi^2_* W^{-2} (\xi_*) - D^2 = \frac{ f_4 ( \xi_*  ) } { (\xi_* + |\vec B|^2)^2 } >0,
	$$
	which yields $h(\vec U) = \xi_*/ \big( D W(\xi_*) \big) >1 $. Using \eqref{eq:gEOSd} and \eqref{eq:epto0} gives
	\begin{align*}
	p(\vec U) = p(\rho(\vec U),h(\vec U)) >  \mathop {\lim }\limits_{ h \to 1^+ } p(\rho(\vec U),h ) = 0.
	\end{align*}
	The proof is completed.
\end{proof}

%
%
%
%

\begin{lemma}\label{theo:RMHD:fUincrease}
	For any $\vec U=(D,\vec m,\vec B,E)^{\top}\in {\mathbb{R}}^{8}$ with $D>0$, the function $f_{ \bf U}(\xi ) $ defined in \eqref{eq:RMHD:fU(xi)} is strictly monotone increasing
	in the interval $\left(  \xi_4,+\infty  \right)$, and $\mathop {\lim }\limits_{\xi \to +\infty } f_{\bf U} (\xi) = + \infty$. Here $\xi_4=\xi_4({\bf U})$ is the unique positive root of $f_4(\xi)$ in $\Omega_f$.
\end{lemma}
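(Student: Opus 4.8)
The plan is to prove the two assertions separately. Strict monotonicity will follow once I show $f_{\bf U}'(\xi)>0$ for every $\xi\in(\xi_4,+\infty)$, and the blow-up at $+\infty$ I will get directly from the kinetic-theory bound \eqref{eq:hcondition11}. \textbf{Step 1 (the interval lies in the physical regime).} On $(\xi_4,+\infty)$ one has $f_4(\xi)>0$ (this is part of the elementary analysis of the quartic $f_4$ that singles out $\xi_4$), so by \eqref{eq:f4:xi} $\xi^2W^{-2}(\xi)>D^2>0$; hence $f_\Omega(\xi)=f_4(\xi)+D^2(\xi+|\vec B|^2)^2>0$, i.e.\ $\xi\in\Omega_f$ and $W(\xi)$ is real and finite, $\rho(\xi):=D/W(\xi)>0$, and $h(\xi):=\xi/(DW(\xi))$ satisfies $h^2=\xi^2W^{-2}/D^2>1$, so $h>1$. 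Thus the pressure $p(\rho(\xi),h(\xi))$ and the EOS inequalities \eqref{eq:hcondition11}, \eqref{eq:gEOSd} are available on the whole interval, and $f_{\bf U}\in C^1$ there.

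\textbf{Step 2 (the derivative).} Abbreviate $g(\xi):=W^{-2}(\xi)=\xi^{-2}(\xi+|\vec B|^2)^{-2}f_\Omega(\xi)$, so that $\rho=D\sqrt g$, $h=\xi\sqrt g/D$ and, crucially, $\rho h=\xi g$. Differentiating \eqref{eq:RMHD:fU(xi)} by the chain rule, inserting $\rho'=Dg'/(2\sqrt g)$ and $h'=\sqrt g/D+\xi g'/(2D\sqrt g)$, and simplifying with $D=\rho/\sqrt g$, $\xi/D=h/\sqrt g$, I expect to arrive at
\begin{equation*}
f_{\bf U}'(\xi)=1-\frac{g'}{2g}\left(\rho\,\frac{\pt p}{\pt\rho}+h\,\frac{\pt p}{\pt h}\right)-\frac{g}{\rho}\,\frac{\pt p}{\pt h}-\frac12|\vec B|^2 g'+\frac{(\vec m\cdot\vec B)^2}{\xi^3}.
\end{equation*}
I also record that $g'(\xi)\ge0$: since $1-g=v^2$ and $v^2=\big(|\vec m|^2+(2\xi^{-1}+\xi^{-2}|\vec B|^2)(\vec m\cdot\vec B)^2\big)(\xi+|\vec B|^2)^{-2}$ has a nonincreasing numerator and an increasing positive denominator, $v^2$ is nonincreasing.

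\textbf{Step 3 (positivity of the derivative, and the limit).} The first inequality in \eqref{eq:gEOSd} gives $\rho\,\frac{\pt p}{\pt\rho}+h\,\frac{\pt p}{\pt h}<\rho h=\xi g$ and, since $h>0$, also $\frac{\pt p}{\pt h}<\rho$. Because $g'/(2g)\ge0$ and $g/\rho>0$, substituting these two bounds into the formula above yields
\begin{equation*}
f_{\bf U}'(\xi)>1-\frac{\xi g'}{2}-g-\frac12|\vec B|^2 g'+\frac{(\vec m\cdot\vec B)^2}{\xi^3}=v^2-\frac12\big(\xi+|\vec B|^2\big)g'+\frac{(\vec m\cdot\vec B)^2}{\xi^3},
\end{equation*}
the inequality being strict since $\frac{\pt p}{\pt h}<\rho$ is strict and $g/\rho>0$. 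It then remains to check that the right-hand side is identically zero: after multiplying by $2\xi^3(\xi+|\vec B|^2)^3$ and substituting the explicit $g'$ obtained from $g=f_\Omega/(\xi^2(\xi+|\vec B|^2)^2)$, it collapses to $0$ upon expansion. This gives $f_{\bf U}'(\xi)>0$ on $(\xi_4,+\infty)$, hence the strict monotonicity. For the limit, \eqref{eq:hcondition11} gives $p(\rho,h)\le\frac{h^2-1}{2h}\rho<\frac h2\rho=\frac12\xi g\le\frac12\xi$, so $\xi-p>\frac12\xi$; the remaining terms $|\vec B|^2-E-\frac12|\vec B|^2 g-\frac{(\vec m\cdot\vec B)^2}{2\xi^2}$ of $f_{\bf U}$ are bounded below for $\xi>\xi_4$ (as $0<g\le1$), so $f_{\bf U}(\xi)\to+\infty$.

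\textbf{Main obstacle.} The delicate point is the estimate in Step 3. A crude use of \eqref{eq:gEOSd} — bounding $\rho\,\frac{\pt p}{\pt\rho}$ and $h\,\frac{\pt p}{\pt h}$ separately rather than the combination $\rho\,\frac{\pt p}{\pt\rho}+h\,\frac{\pt p}{\pt h}$ — wastes the positive term $h\,\frac{\pt p}{\pt h}$ and leaves a useless lower bound of the form $-\frac12\xi g'\le0$. The two inequalities of \eqref{eq:gEOSd} must be combined in the coupled way above, so that all the $g'$-contributions cancel through the algebraic identity $v^2=\frac12(\xi+|\vec B|^2)g'-(\vec m\cdot\vec B)^2/\xi^3$ down to exactly zero; this sharpness is what the proof hinges on. The only other non-routine work is differentiating the algebraic function $W(\xi)$ and verifying that polynomial identity, both purely mechanical.
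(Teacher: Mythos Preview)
Your proof is correct and follows essentially the same route as the paper: compute $f'_{\bf U}$, apply the EOS inequalities \eqref{eq:gEOSd}, and use the algebraic identity linking $W$ (equivalently $g=W^{-2}$) to its derivative---your identity $v^2=\tfrac12(\xi+|\vec B|^2)g'-(\vec m\cdot\vec B)^2/\xi^3$ is precisely the paper's $\Xi_\xi=W^{-2}-\xi W'/W^3$ in different variables. The only cosmetic difference is that you substitute twice (for the combination $\rho\,\partial_\rho p+h\,\partial_h p$ and then for $\partial_h p$), landing on a zero right-hand side with strictness coming from $\partial_h p<\rho$, whereas the paper makes just the first substitution and retains the explicitly positive remainder $W^{-2}\big(1-\rho^{-1}\partial_h p\big)$.
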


\begin{proof}
	This result is nontrivial and the proof is also technical.
	From \eqref{eq:RMHD:fU(xi)} and \eqref{eq:Wxi},  the derivatives of $f_{ \bf U}(\xi ) $ and $W(\xi)$ with respect to $\xi $ is calculated as follows
	\begin{equation}\label{eq:dfUdxi}
	{f'_{\bf U}}(\xi ) =
	\frac{D}{W^2} W'(\xi) \frac{\pt p}{\pt \rho} \left( \frac{D}{W},\frac{\xi}{DW} \right) + \varXi_\xi \left[ 1 -   \frac{W }{D}   \frac{\pt p}{\pt h} \left( \frac{D}{W},\frac{\xi}{DW} \right)  \right] ,
	\end{equation}
	and
	\[
	W'(\xi ) =  - {W^3}\frac{{{{(\vec m \cdot \vec B)}^2}(3{\xi^2} + 3\xi{|\vec B|^2} + {|\vec B|^4}) + {|\vec m|^2}{\xi^3}}}{{{\xi^3}{{(\xi + {|\vec B|^2})}^3}}},
	\]
	where
	$$
	\varXi_\xi :=
	{1 + \frac{{{{\left|\vec  B \right|}^2}}}{{{W^3}}}W'(\xi )
		+ \frac{{{{(\vec m \cdot \vec B)}^2}}}{{{\xi^3}}}} = \frac{1}{{{W^2}}} - \frac{{\xi }}{{{W^3}}}W'(\xi ),
	$$
	has been used.
	

	Because $D/W(\xi)>0$ and $\xi/\big(D W(\xi)\big) > 1$ for any $\xi \in (\xi_4,+\infty)$, by using \eqref{eq:gEOSd} one obtains
	\begin{equation}\label{eq:gEOSieqProof}
	\frac{\pt p}{\pt \rho} \left( \frac{D}{W},\frac{\xi}{DW} \right) < \frac{\xi}{DW}\left[ 1-\frac{W}{D} \frac{\pt p}{\pt h} \left( \frac{D}{W},\frac{\xi}{DW} \right) \right],
	\end{equation}
	and
	\begin{equation}\label{eq:gEOSieqProof2}
	0< \frac{\pt p}{\pt h} \left( \frac{D}{W},\frac{\xi}{DW} \right) < \frac{D}{W}.
	\end{equation}
	Noting that $W'(\xi ) \le 0$ for any $\xi \in \Omega_f \subset (\xi_4,+\infty)$, and  using $\varXi_\xi >0$, \eqref{eq:gEOSieqProof} and \eqref{eq:gEOSieqProof2} give
	\begin{align*}
	{f'_{\bf U}}(\xi )
	&
	\ge
	\frac{\xi W'(\xi)}{W^3} \left[ 1-\frac{W}{D} \frac{\pt p}{\pt h} \left( \frac{D}{W},\frac{\xi}{DW} \right) \right] + \varXi_\xi \left[ 1 -   \frac{W }{D}   \frac{\pt p}{\pt h} \left( \frac{D}{W},\frac{\xi}{DW} \right)  \right]
	\\[2mm]
	&
	= \left( \varXi_\xi +  \frac{\xi W'(\xi)}{W^3}  \right) \left[ 1 -   \frac{W }{D}   \frac{\pt p}{\pt h} \left( \frac{D}{W},\frac{\xi}{DW} \right)  \right]
	\\[2mm]
	&
	=  \frac{1}{DW} \left[ \frac{D}{W} -      \frac{\pt p}{\pt h} \left( \frac{D}{W},\frac{\xi}{DW} \right)  \right] > 0,
	\end{align*}
	which implies that ${f_{\bf U}}(\xi ) $ is strictly monotone increasing
	in the interval $\left(  \xi_4,+\infty  \right)$.
	
	Let us prove $\mathop {\lim }\limits_{\xi \to +\infty } f_{\bf U} (\xi) = + \infty$. Using \eqref{eq:hcondition11} gives
	$$
	p\left( \frac{D}{W},\frac{\xi}{DW} \right) \le  \frac{ \xi^2W^{-2} - D^2 } {2\xi} ,
	$$
	which yields
	\begin{align*}
	{f_{\bf U}}(\xi ) & \ge \xi - \frac{ \xi^2W^{-2} - D^2 } {2\xi} + {\left| \vec B \right|^2} - \frac{1}{2}\left[ {\frac{{{{\left| \vec B \right|}^2}}}{{{{W}^2}}} + \frac{{{{(\vec m \cdot \vec B)}^2}}}{{{\xi ^2}}}} \right] - E \\
	& > \left(1- \frac{1}{2W^2} \right) \xi  - \frac{1}{2}\left[ {\frac{{{{\left| \vec B \right|}^2}}}{{{{W}^2}}} + \frac{{{{(\vec m \cdot \vec B)}^2}}}{{{\xi ^2}}}} \right] - E \to + \infty, \quad \mbox{as } \xi \to +\infty,
	\end{align*}
	where $\mathop {\lim }\limits_{\xi \to +\infty } W(\xi) = 1$ has been used. The proof is completed.
\end{proof}

%
%
%
%
%
%

\begin{remark}
	The proof and conclusion of Lemma
	\ref{theo:RMHD:fUincrease}
	also hold for
	the RHD case by taking ${\bf B}={\bf 0}$.
	In other words, they actually provide
	a different way to show  Lemma 3.2 in Ref.\ \cite{WuTang2017ApJS}.
	The current proof only requires the
	differentiability of $p(\rho,h)$, while the proof in
	Ref.\ \cite{WuTang2017ApJS} needs the
	continuously differentiability of $e(p,\rho)$.
	Note that, without any revision,
	the proofs of Lemmas \ref{theo:RMHD:CYcondition}
	and \ref{theo:RMHD:fUincrease} work under a slightly milder condition
	\begin{equation}\label{eq:hcondition1111}
	h(p,\rho) \ge 2p/\rho,
	\end{equation}
	than \eqref{eq:hcondition1} or equivalently \eqref{eq:hcondition11}.
	However, the condition \eqref{eq:hcondition1} or    \eqref{eq:hcondition11} is physically ensured, and can be directly deduced from the relativistic kinetic theory \cite{WuTang2017ApJS} without any assumption.
	Moreover, the condition \eqref{eq:hcondition1} is necessary
	and cannot be weaken for $q({\bf U})>0$,
	even in the case
	of ${\bf B}={\bf 0}$ (i.e., the RHD case).
	
\end{remark}

Based on the above lemmas and  Lemmas 2.3, 2.4, 2.6 and 2.7 in Ref.\ \cite{WuTangM3AS2017},
the
first equivalent definition of $\mathcal G$ can be established with the  proof similar
to that in Ref.\ \cite{WuTangM3AS2017} for the ideal EOS  and omitted here.

\begin{theorem}[First equivalent definition]\label{theo:RMHD:CYconditionFINAL2}
	The admissible state set~${\mathcal G}$ is equivalent to the   set
	\begin{align}
	{\mathcal G}_0 := \left\{   \vec U=(D,\vec m,\vec B,E)^{\top} \big|  D>0,q(\vec U)>0, \Psi (\vec U) > 0 \right\},
	\label{eq:RMHD:definitionG2}
	\end{align}
	where
	$$
	\Psi (\vec U) := \big( \Phi(\vec U)-2(|\vec B|^2-E) \big) \sqrt{\Phi(\vec U)+|\vec B|^2-E} - \sqrt{ \frac{27}{2} \bigg( D^2|\vec B|^2+(\vec m \cdot \vec B)^2 \bigg)},
	$$
	with ${\Phi(\vec U):}= \sqrt{({|\vec B|^2} - E)^2 + 3({E^2} - {D^2} - |\vec m|^2)}$.
\end{theorem}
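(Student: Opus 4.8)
The plan is to combine Lemmas~\ref{theo:RMHD:condition}--\ref{theo:RMHD:fUincrease} to collapse the implicitly defined conditions for $\mathcal{G}$ to a single scalar inequality in the root $\xi_4(\vec U)$ of the quartic $f_4$, and only then to render that inequality explicit in the conserved variables. Fix $\vec U=(D,\vec m,\vec B,E)^{\top}$ and abbreviate $b:=|\vec B|^2$ and $P:=D^2|\vec B|^2+(\vec m\cdot\vec B)^2\ge 0$. By Lemma~\ref{theo:RMHD:condition} every $\vec U\in\mathcal G$ satisfies $D>0$ and $q(\vec U)>0$, and these two constraints appear verbatim in $\mathcal G_0$, so it suffices to prove the equivalence of the remaining requirements under the standing hypotheses $D>0$, $q(\vec U)>0$. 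First I would pin down $\Omega_f$: since $f_\Omega=f_4+D^2(\xi+b)^2$ with $D>0$, and $\xi_4$ is by Lemma~\ref{theo:RMHD:fUincrease} the unique positive root of $f_4$ in $\Omega_f$ while $f_4$ has positive leading coefficient (it is $\xi^4+\cdots$), one gets $f_4>0$ on $(\xi_4,+\infty)$ and hence $\Omega_f=[\xi_4,+\infty)$. In particular, for any zero $\xi_*\in\Omega_f$ of $f_{\vec U}$, the extra condition $f_4(\xi_*)>0$ in \eqref{eq:fourC} is equivalent to $\xi_*>\xi_4$.

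Next I would invoke the monotonicity: by Lemma~\ref{theo:RMHD:fUincrease}, $f_{\vec U}$ is strictly increasing on $(\xi_4,+\infty)$ with $f_{\vec U}(\xi)\to+\infty$, so by Lemma~\ref{theo:RMHD:CYcondition} and the previous paragraph $\vec U\in\mathcal G$ if and only if $f_{\vec U}$ has a zero in $(\xi_4,+\infty)$, which in turn holds if and only if
$$\widehat f(\vec U):=\lim_{\xi\to\xi_4^+}f_{\vec U}(\xi)<0 .$$
The key point — and the only place where the general EOS enters — is that this boundary value is computable in closed form and is \emph{independent} of the EOS: $f_4(\xi_4)=0$ forces $\xi_4^2W^{-2}(\xi_4)=D^2$, hence $W(\xi_4)=\xi_4/D$ and the would-be enthalpy $\xi_4/\big(DW(\xi_4)\big)=1$, so as $\xi\to\xi_4^+$ the argument $\xi/\big(DW(\xi)\big)\to 1^+$ and by \eqref{eq:epto0} the pressure term $p\big(D/W(\xi),\xi/(DW(\xi))\big)\to 0$; substituting $W^{-2}(\xi_4)=D^2/\xi_4^2$ in \eqref{eq:RMHD:fU(xi)} gives
$$\widehat f(\vec U)=\xi_4+b-E-\frac{P}{2\xi_4^{\,2}},$$
which coincides with the expression used in Ref.~\cite{WuTangM3AS2017} for the ideal EOS. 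Thus, under $D>0$ and $q(\vec U)>0$, membership $\vec U\in\mathcal G$ is equivalent to the single cubic inequality $2\xi_4^{\,3}+2(b-E)\xi_4^{\,2}-P<0$.

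It then remains to make this explicit, i.e., to express the largest positive root $\xi_4$ of $f_4$ in terms of $D,\vec m,\vec B,E$ and to check that the sign condition $2\xi_4^{\,3}+2(b-E)\xi_4^{\,2}-P<0$ is exactly $\Psi(\vec U)>0$. This step is purely algebraic and is carried out as in Lemmas~2.3--2.7 of Ref.~\cite{WuTangM3AS2017}: from $q(\vec U)>0$ one first verifies $\Phi(\vec U)>|\,b-E\,|$, so that $\Phi(\vec U)+b-E>0$ and the radicals in $\Psi$ are well defined; one then identifies $\xi_4$ through the associated (depressed cubic) equation governed by the quantity $\Phi(\vec U)$, and substituting the resulting expression reduces the sign of $2\xi_4^{\,3}+2(b-E)\xi_4^{\,2}-P$ to that of $\Psi(\vec U)$. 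The converse inclusion $\mathcal G_0\subseteq\mathcal G$ follows by reversing these implications and checking, via Lemma~\ref{theo:RMHD:CYcondition}, that the resulting $\xi_*$ obeys all four inequalities of \eqref{eq:fourC}. I expect the last step — the explicit resolution of the quartic $f_4$ and the verification that the cubic sign condition in $\xi_4$ collapses to the algebraic form $\Psi(\vec U)>0$ — to be the main obstacle; everything before it is a routine combination of Lemmas~\ref{theo:RMHD:condition}--\ref{theo:RMHD:fUincrease} with the boundary identity $W(\xi_4)=\xi_4/D$.
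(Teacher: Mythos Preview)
Your approach is essentially the paper's: combine Lemmas~\ref{theo:RMHD:condition}--\ref{theo:RMHD:fUincrease} to reduce membership in $\mathcal G$ (under $D>0$, $q(\vec U)>0$) to the sign of $f_{\vec U}$ at $\xi=\xi_4$, observe via \eqref{eq:epto0} that the pressure term vanishes there so that $\widehat f(\vec U)$ is EOS-independent, and then import the purely algebraic identification of the resulting cubic sign condition with $\Psi(\vec U)>0$ from Lemmas~2.3--2.7 of Ref.~\cite{WuTangM3AS2017}. One minor slip: the claim $\Omega_f=[\xi_4,+\infty)$ is false in general (take $\vec m=\vec 0$, $\vec B\neq\vec 0$, $D>0$, which gives $\Omega_f=(0,+\infty)$ but $\xi_4=D$); what you actually need---and what follows once you note that $f_\Omega=f_4+D^2(\xi+b)^2>0$ at every positive root of $f_4$, so uniqueness of $\xi_4$ in $\Omega_f$ is the same as uniqueness in $(0,+\infty)$---is only that $\xi_4$ is the unique positive root of $f_4$, whence $f_4(\xi_*)>0\Leftrightarrow\xi_*>\xi_4$ for $\xi_*>0$.
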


\begin{remark}\label{rem:importantRemark}
	As pointed out in Ref.\ \cite{WuTangM3AS2017}, the constraint ${\Psi}(\vec U) >0$ in \eqref{eq:RMHD:definitionG2} is equivalent to two constraints $\hat q(\vec U)>0$ and $\tilde q(\vec U)>0$, where
	\begin{align*}
	\hat q(\vec U) &:= \sqrt{ \left( E-|\vec B|^2 \right)^2+3\left( E^2-D^2-|\vec m|^2 \right) } + 2 \left( E-|\vec B|^2 \right), \\
	\tilde q(\vec U) & := \Phi^6(\vec U) - \bigg( \left( E-|\vec B|^2 \right)^3 + \frac{27}{2} \left( {|\vec B|^2D^2 + |\vec m \cdot \vec B|^2} \right)
	\\
	& \quad - 9 \left( E^2-D^2-|\vec m|^2 \right) \left( E-|\vec B|^2 \right)  \bigg)^2.
	\end{align*}
	Checking those two constraints can be more effective
	in analytically showing that a numerical scheme is not PCP, see
	the proof of Theorem \ref{theo:disprove}.
\end{remark}

\subsection{Convexity and second equivalent definition}\label{sec:convexity}

As one can see from \eqref{eq:RMHD:definitionG2},
the admissible state set $\mathcal G_0=\mathcal G$ for a general EOS \eqref{eq:EOS:h}
is the same as that for the ideal EOS \eqref{eq:iEOS}.
This implies the fact that for
any given ${\bf U} \in \mathcal G_0=\mathcal G$,
even if ${\bf U}$ is the state of non-ideal gas,
there still exists a set of physical variables $(\rho,{\bf v},{\bf B},p)$ of an ideal gas such that the value of corresponding
conservative vector is  $\bf U$.
As a result, the convexity of $\mathcal G_0=\mathcal G$
and the second equivalent definition are directly followed from the analysis
in Ref.\ \cite{WuTangM3AS2017}.

\begin{theorem}\label{theo:RMHD:convex}
	The admissible state set ${\mathcal G}_0$ is a convex set.
\end{theorem}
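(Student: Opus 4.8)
The plan is to exploit the fact, emphasized in the paragraph preceding the statement, that $\mathcal{G}_0$ has exactly the same algebraic description \eqref{eq:RMHD:definitionG2} as the admissible state set for the ideal EOS; hence convexity is a purely geometric property of the three conditions $D>0$, $q(\vec U)>0$, $\Psi(\vec U)>0$ and can be inherited verbatim from Ref.\ \cite{WuTangM3AS2017}. Concretely, since a finite intersection of convex sets is convex, it suffices to show that each of the three sublevel/superlevel sets
$$
\{D>0\},\qquad \{\,q(\vec U)>0\,\},\qquad \{\,\Psi(\vec U)>0\,\}
$$
is convex (the last one only needs to be convex in intersection with the first two, where $\Phi$ and the various radicands are well defined). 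The first is a half-space, hence trivially convex. For the second, $q(\vec U)=E-\sqrt{D^2+|\vec m|^2}$ is concave in $(D,\vec m,E)$ because $\sqrt{D^2+|\vec m|^2}$ is a norm, hence convex, so $\{q>0\}$ is a superlevel set of a concave function and therefore convex.

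The substantive part is the convexity of $\{\Psi(\vec U)>0\}$. Here I would follow the route of \cite{WuTangM3AS2017}: rather than attacking the messy closed-form expression for $\Psi$ directly, use the equivalent characterization recorded in Remark \ref{rem:importantRemark}, namely that on $\{D>0,\,q(\vec U)>0\}$ the condition $\Psi(\vec U)>0$ is equivalent to $\hat q(\vec U)>0$ together with $\tilde q(\vec U)>0$; or, better still, use the \emph{scalar-product / supporting-hyperplane} description of $\mathcal G$ that underlies those lemmas. That is, one shows $\mathcal{G}_0$ equals the set of $\vec U$ with $D>0$ for which $\vec U\cdot \vec n^* + b^* \ge 0$ (in fact $>0$) for every member of a suitable family of affine functionals parametrized by auxiliary vectors (playing the role of a candidate velocity $\vec v$ with $|\vec v|<1$ and a candidate $\rho h$), exactly as in the ideal-EOS analysis. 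Being an intersection of half-spaces intersected with $\{D>0\}$, this set is manifestly convex. The key inputs — Lemmas \ref{theo:RMHD:condition}, \ref{theo:RMHD:CYcondition}, \ref{theo:RMHD:fUincrease} above, and Lemmas 2.3, 2.4, 2.6, 2.7 of \cite{WuTangM3AS2017}, which were invoked to prove Theorem \ref{theo:RMHD:CYconditionFINAL2} — are precisely what makes this linear-functional representation valid, and Theorem \ref{theo:RMHD:CYconditionFINAL2} itself guarantees $\mathcal{G}_0=\mathcal{G}$ so nothing new about the EOS is needed.

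I expect the main obstacle to be bookkeeping rather than a genuine difficulty: one must be careful that all the nonlinear functions entering the representation ($\Phi$, the radicands, the auxiliary parametrizations) are defined and have the required sign on the region $\{D>0,\,q(\vec U)>0\}$, so that the half-space representation is valid on the whole of $\mathcal{G}_0$ and not merely on an open subset; this is exactly where $q(\vec U)>0$ (equivalently the kinetic-theory bound \eqref{eq:hcondition1}) is used. Once that is in place, convexity is immediate from the intersection-of-convex-sets principle, and since $\mathcal{G}_0$ coincides with the ideal-EOS set, the argument is identical to \cite{WuTangM3AS2017}; accordingly I would state the proof briefly and refer to \cite{WuTangM3AS2017} for the detailed verification, as the excerpt's surrounding text already signals.
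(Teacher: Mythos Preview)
Your proposal is correct and matches the paper's approach: since $\mathcal G_0$ is defined by the explicit, EOS-independent algebraic conditions in \eqref{eq:RMHD:definitionG2}, its convexity is inherited verbatim from the ideal-EOS analysis in Ref.\ \cite{WuTangM3AS2017}, and the paper simply states this without a standalone proof. The paper additionally remarks that the one inequality in the convexity proof of \cite{WuTangM3AS2017} that used the ideal EOS can, for a general EOS, be recovered as the special case $\theta=0$ of Lemma \ref{theo:RMHD:LLFsplit}; your half-space/intersection-of-convex-sets sketch is a faithful description of that same underlying argument.
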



The convexity of admissible
state set is very useful in the bound-preserving analysis, since it helps one reduce the
complexity if the schemes can be rewritten into a convex combination, see e.g.,  \cite{zhang2010b,WuTang2015,WuTang2017ApJS,Wu2017}.
Although
the last inequality in the proof of convexity
in Ref.\ \cite{WuTangM3AS2017} involves the ideal EOS,
according to
the fact figured out above,
it also holds for the general EOS. Certainly,
that inequality can also be directly derived for
a general EOS, same as
Lemma \ref{theo:RMHD:LLFsplit} with taking $\theta=0$.

\begin{theorem}[Second equivalent definition]\label{theo:RMHD:CYcondition:VecN}
	The admissible state set ${\mathcal G}$ or ${\mathcal G}_0$ is equivalent to the   set
	\begin{align}
	\nonumber
	{\mathcal G}_1 := \big\{   \vec U=(D,\vec m,\vec B,E)^{\top} \in \mathbb{R}^8 \big|  D>0, \vec U \cdot
	{{\vec n^*}} + {p^*_m} >0, \\
	\mbox{for any {${\vec B^*}, {\vec v^*} \in \mathbb{R}^3$} with  $  |\vec v^*|<1$} \big\},
	\label{eq:RMHD:CYcondition:VecNG1} \end{align}
	where
	\begin{align}\label{eq:RMHD:vecns}
	&{\vec n}^* = {\left( - \sqrt {1 - {|\vec v^*|}^2} ,~
		- {\vec v}^*,~ - (1 - {|\vec v^*|}^2) {\vec B}^* - ({\vec v}^* \cdot {\vec B}^*) {\vec v}^*,~1 \right)^{\top}},\\
	& p_{m}^*  = \frac{ (1-{|\vec v^*|}^2) |{\vec B}^*|^2 +({\vec v}^* \cdot {\vec B}^*)^2 }{2}. \label{eq:RMHD:vecns2}
	\end{align}
\end{theorem}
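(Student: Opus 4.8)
The plan is to establish the equivalence $\mathcal G_1 = \mathcal G_0$ by a double inclusion, using the first equivalent definition (Theorem \ref{theo:RMHD:CYconditionFINAL2}) together with the structural identity $\vec U \cdot \vec n^* + p_m^* = \rho^* h^* W^{*2}(1-\vec v\cdot\vec v^*) - \big(\text{flux-type correction in }\vec B,\vec B^*\big)$ that is available once $\vec U$ is written in primitive variables. First I would record that both sets share the constraint $D>0$, so the real content is the equivalence between $\Psi(\vec U)>0$ (equivalently $q(\vec U)>0$ and $\Psi(\vec U)>0$) and the infimum condition $\inf_{|\vec v^*|<1,\ \vec B^*\in\mathbb R^3}\big(\vec U\cdot\vec n^* + p_m^*\big)\ge 0$. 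The key computational observation is that for fixed $\vec v^*$ the quantity $\vec U\cdot\vec n^* + p_m^*$ is quadratic (in fact concave, as a sum of an affine term and the concave $-p_m^*$ after the right identification, or directly minimizable) in $\vec B^*$, so the inner minimization over $\vec B^*$ can be carried out explicitly, reducing the problem to an optimization over the unit ball $|\vec v^*|<1$ only.

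For the direction $\mathcal G_0 \subseteq \mathcal G_1$: given $\vec U\in\mathcal G_0=\mathcal G$, I would use the primitive decomposition $\vec U = \big(\rho W,\ \rho h W^2\vec v + |\vec B|^2\vec v - (\vec v\cdot\vec B)\vec B,\ \vec B,\ \rho h W^2 - p_{tot} + |\vec B|^2\big)^\top$ with $\rho,p>0$ and $v<1$, and plug it into $\vec U\cdot\vec n^* + p_m^*$. After simplification the expression should collapse to something of the form $\rho h W^2(1-\vec v\cdot\vec v^*) - p(\text{sign-definite factor}) + (\text{a manifestly nonnegative quadratic form in }\vec B,\vec B^*,\vec v,\vec v^*)$; using $1-\vec v\cdot\vec v^* > 0$ (Cauchy–Schwarz with $v<1$, $|\vec v^*|<1$) and $h\ge\sqrt{1+p^2/\rho^2}+p/\rho$ from \eqref{eq:hcondition1}, one concludes positivity. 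This is essentially the generalized Lax–Friedrichs splitting computation specialized to $\theta=0$, already alluded to in the text as ``Lemma \ref{theo:RMHD:LLFsplit} with taking $\theta=0$,'' so I would either invoke that lemma directly or reproduce its algebra here.

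For the converse $\mathcal G_1 \subseteq \mathcal G_0$: here the strategy is contrapositive. Suppose $\vec U\notin\mathcal G_0$ but $D>0$; then either $q(\vec U)\le 0$ or $\Psi(\vec U)\le 0$. I would exhibit an explicit choice of $(\vec v^*,\vec B^*)$ with $|\vec v^*|<1$ making $\vec U\cdot\vec n^* + p_m^*\le 0$. For the case $q(\vec U)=E-\sqrt{D^2+|\vec m|^2}\le 0$ take $\vec B^*=\vec 0$ and $\vec v^*$ pointing along $\vec m$ with $|\vec v^*|\to 1^-$: then $\vec U\cdot\vec n^* = -\sqrt{1-|\vec v^*|^2}\,D - \vec v^*\cdot\vec m + E \to E - |\vec m| \le E - \sqrt{D^2+|\vec m|^2} + \text{(small)}$, giving nonpositivity in the limit (and one argues the infimum is attained or approached, consistent with the open constraint by a limiting/closure argument as in Ref.\ \cite{WuTangM3AS2017}). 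For the case governed by $\Psi(\vec U)\le 0$, i.e. the $\hat q,\tilde q$ constraints of Remark \ref{rem:importantRemark}, the minimizing $(\vec v^*,\vec B^*)$ is less transparent; I would use the explicit inner minimization over $\vec B^*$ from the first paragraph to reduce to a scalar problem parametrized by $|\vec v^*|$, then match the resulting expression with the factorization behind $\Psi$ (the $\Phi$, the $27/2$ factor, the cube structure all point to the critical point of a cubic, exactly as in the derivation of Theorem \ref{theo:RMHD:CYconditionFINAL2}).

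The main obstacle I expect is this last matching: showing that the optimal value of $\inf_{\vec v^*,\vec B^*}\big(\vec U\cdot\vec n^* + p_m^*\big)$ is, up to a positive factor, precisely $\Psi(\vec U)$ (or $\min\{q,\hat q,\tilde q\}$-type quantities). That requires carefully carrying out the two-stage optimization — first the closed-form minimum in $\vec B^*$, then a constrained optimization of a ratio/cubic in $|\vec v^*|^2\in[0,1)$ — and recognizing the discriminant-like closed form. Fortunately, since Theorem \ref{theo:RMHD:CYcondition:VecN} is asserted to follow ``directly'' from the ideal-EOS analysis in Ref.\ \cite{WuTangM3AS2017} because $\mathcal G_0=\mathcal G$ coincides for ideal and general EOS (established just above via Theorem \ref{theo:RMHD:CYconditionFINAL2}), the honest and economical route is: note $\mathcal G_1$ as defined does not reference the EOS at all, so $\mathcal G_1$ is literally the same set as in the ideal case, where the equivalence $\mathcal G_1=\mathcal G_0$ was already proved; combined with $\mathcal G_0^{\text{(general)}}=\mathcal G_0^{\text{(ideal)}}$ and $\mathcal G^{\text{(general)}}=\mathcal G_0^{\text{(general)}}$ (Theorem \ref{theo:RMHD:CYconditionFINAL2}), the chain of equalities closes. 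I would present the short reduction argument as the proof and relegate the explicit splitting computation to the reference, flagging only where condition \eqref{eq:hcondition1} is the place the general EOS enters.
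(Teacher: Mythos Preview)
Your final reduction argument is exactly the paper's approach: the paper gives no self-contained proof of Theorem~\ref{theo:RMHD:CYcondition:VecN} but simply observes (in the paragraph preceding it) that since $\mathcal G_0$ as characterized in Theorem~\ref{theo:RMHD:CYconditionFINAL2} is EOS-independent, and $\mathcal G_1$ is manifestly EOS-independent, the equivalence $\mathcal G_0=\mathcal G_1$ established in Ref.~\cite{WuTangM3AS2017} for the ideal EOS carries over verbatim. Your proposal goes further than the paper by also sketching a direct double-inclusion argument (invoking Lemma~\ref{theo:RMHD:LLFsplit} at $\theta=0$ for one direction and a contrapositive optimization for the other), which the paper does not attempt here; that sketch is sound in outline, though the strict inequality in $\mathcal G_1$ would require noting that the $H_\pm$ terms in the proof of Lemma~\ref{theo:RMHD:LLFsplit} are strictly positive when $\rho,p>0$.
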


The importance of the second equivalent form \eqref{eq:RMHD:CYcondition:VecNG1} lies in that all constraints
are  linear  with respect to $\vec U$ so that
it will be very effective in theoretically verifying
the PCP property of  the numerical schemes for the RMHD equations \eqref{eq:RMHD1D}.

\begin{remark}
	Theorems \ref{theo:RMHD:CYconditionFINAL2} and \ref{theo:RMHD:CYcondition:VecN} indicate that
	${\mathcal G}={\mathcal G}_0={\mathcal G}_1$, which
	will not be deliberately distinguished henceforth.
\end{remark}

As a direct consequence of Theorem \ref{theo:RMHD:convex} or \ref{theo:RMHD:CYcondition:VecN}, the following corollary holds.

\begin{corollary}\label{lam:new:convex}
	If define  	
	\begin{align*}
	\overline{\mathcal G} := \big\{   \vec U=(D,\vec m,\vec B,E)^{\top} \in \mathbb{R}^8 \big|  D>0, \vec U \cdot
	{{\vec n^*}} + {p^*_m} \ge 0,
	\\
	\mbox{for any {${\vec B^*}, {\vec v^*} \in \mathbb{R}^3$} with  $  |\vec v^*|<1$} \big\},
	\end{align*}
	then
	$\lambda {\bf U} + (1-\lambda) \tilde{\bf U} \in {\mathcal G} $
	for any
	${\bf U} \in {\mathcal G}$, $\tilde{\bf U} \in \overline {\mathcal G}$ and  $\lambda \in (0,1]$.
\end{corollary}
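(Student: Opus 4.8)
The plan is to read the statement off directly from the second equivalent definition of the admissible state set. By Theorem~\ref{theo:RMHD:CYcondition:VecN} we have $\mathcal G=\mathcal G_1$, whose defining conditions are: (i) $D>0$, and (ii) the family of inequalities ${\bf U}\cdot\vec n^*+p^*_m>0$ indexed by all $\vec v^*,\vec B^*\in\mathbb R^3$ with $|\vec v^*|<1$, where $\vec n^*$ and $p^*_m$ are the \emph{fixed} (i.e.\ ${\bf U}$-independent) quantities in \eqref{eq:RMHD:vecns}--\eqref{eq:RMHD:vecns2}. The key point is that every condition in (ii) is \emph{affine} in ${\bf U}$; this is exactly the feature of $\mathcal G_1$ that makes it useful here. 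Writing ${\bf W}:=\lambda{\bf U}+(1-\lambda)\tilde{\bf U}$, it thus suffices to verify (i) and (ii) for ${\bf W}$.

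For (i): denoting by $D_{\bf W}, D_{\bf U}, D_{\tilde{\bf U}}$ the first components, linearity gives $D_{\bf W}=\lambda D_{\bf U}+(1-\lambda)D_{\tilde{\bf U}}$. Since ${\bf U}\in\mathcal G$ forces $D_{\bf U}>0$, the definition of $\overline{\mathcal G}$ forces $D_{\tilde{\bf U}}>0$, and $\lambda\in(0,1]$ gives $\lambda>0$ and $1-\lambda\ge0$, I conclude $D_{\bf W}\ge\lambda D_{\bf U}>0$.

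For (ii): fix any $\vec v^*,\vec B^*\in\mathbb R^3$ with $|\vec v^*|<1$ and the associated $\vec n^*, p^*_m$. Using linearity of the dot product and the trivial splitting $p^*_m=\lambda p^*_m+(1-\lambda)p^*_m$, I would write
\[
{\bf W}\cdot\vec n^*+p^*_m=\lambda\bigl({\bf U}\cdot\vec n^*+p^*_m\bigr)+(1-\lambda)\bigl(\tilde{\bf U}\cdot\vec n^*+p^*_m\bigr).
\]
By Theorem~\ref{theo:RMHD:CYcondition:VecN} applied to ${\bf U}\in\mathcal G=\mathcal G_1$ the first bracket is $>0$, and by the definition of $\overline{\mathcal G}$ applied to $\tilde{\bf U}$ the second bracket is $\ge0$; combined with $\lambda>0$ and $1-\lambda\ge0$ this makes the left-hand side $>0$. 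Since $\vec v^*,\vec B^*$ were arbitrary, ${\bf W}$ meets all constraints defining $\mathcal G_1$, hence ${\bf W}\in\mathcal G_1=\mathcal G$, which is the claim.

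I do not expect a genuine obstacle here: once $\mathcal G=\mathcal G_1$ is invoked, the argument is the standard ``a convex combination of an interior point and a closure point, with positive weight on the interior point, stays in the interior,'' made fully explicit through the half-space representation. The only point requiring care is to keep strictness, i.e.\ to use $\lambda>0$ rather than $\lambda\ge0$ so that the nonnegative combination in (ii) is genuinely positive. One could alternatively obtain the weaker version with $\tilde{\bf U}\in\mathcal G$ straight from Theorem~\ref{theo:RMHD:convex} and then upgrade to $\tilde{\bf U}\in\overline{\mathcal G}$ by approximating $\tilde{\bf U}$ with points of $\mathcal G$, but the affine route above avoids any limiting argument.
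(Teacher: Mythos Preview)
Your proof is correct and follows exactly the approach the paper indicates: the paper states the corollary as a direct consequence of Theorem~\ref{theo:RMHD:convex} or Theorem~\ref{theo:RMHD:CYcondition:VecN} without spelling out details, and your argument via the affine half-space description $\mathcal G=\mathcal G_1$ is precisely how one reads it off from Theorem~\ref{theo:RMHD:CYcondition:VecN}.
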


Theorem \ref{theo:RMHD:CYcondition:VecN}  also implies the following
orthogonal invariance of the admissible state set  ${\mathcal G}_1$.

\begin{corollary}[Orthogonal invariance \cite{WuTangM3AS2017}] \label{lem:RMHD:zhengjiao}
	Let $\vec T :={\rm diag}\{1,\vec T_3,\vec T_3,1\}$, where
	$\vec T_3$ denotes any  orthogonal matrix  of size $d$.
	If $\vec U \in{\mathcal G}_1$, then
	$\vec T \vec U \in{\mathcal G}_1$.
\end{corollary}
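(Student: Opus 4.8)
The plan is to deduce this corollary directly from the second equivalent definition of $\mathcal G_1$ in Theorem \ref{theo:RMHD:CYcondition:VecN}. The key observation is that the defining inequalities of $\mathcal G_1$ are parametrized by a pair $(\vec v^*, \vec B^*)$ with $|\vec v^*|<1$, and that an orthogonal change of variables on the spatial components of $\vec U$ can be absorbed into a corresponding change of the parameter pair. Concretely, I would write $\vec T = \mathrm{diag}\{1,\vec T_3,\vec T_3,1\}$ with $\vec T_3$ orthogonal, so that if $\vec U = (D,\vec m,\vec B,E)^\top$ then $\vec T\vec U = (D, \vec m\vec T_3^\top, \vec T_3\vec B, E)^\top$ (being careful about whether $\vec m$ is a row or column vector and matching the convention in the paper; the $D>0$ component is untouched, so the first condition $D>0$ is trivially preserved).

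For the linear-functional condition, I would fix an arbitrary test pair $\vec v^*, \vec B^* \in \mathbb R^3$ with $|\vec v^*|<1$ and form $\vec n^*$ and $p_m^*$ as in \eqref{eq:RMHD:vecns}–\eqref{eq:RMHD:vecns2}. The goal is to show $(\vec T\vec U)\cdot \vec n^* + p_m^* \ge 0$ (strict, since $\vec U\in\mathcal G_1$). The strategy is to check that $(\vec T\vec U)\cdot \vec n^*(\vec v^*,\vec B^*) = \vec U \cdot \vec n^*(\hat{\vec v}^*,\hat{\vec B}^*)$ where $\hat{\vec v}^* := \vec T_3^\top \vec v^*$ and $\hat{\vec B}^* := \vec T_3^\top \vec B^*$, and similarly $p_m^*(\vec v^*,\vec B^*) = p_m^*(\hat{\vec v}^*,\hat{\vec B}^*)$. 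Since $\vec T_3$ is orthogonal, $|\hat{\vec v}^*| = |\vec v^*| < 1$, so $(\hat{\vec v}^*,\hat{\vec B}^*)$ is again an admissible test pair, and membership $\vec U\in\mathcal G_1$ gives $\vec U\cdot \vec n^*(\hat{\vec v}^*,\hat{\vec B}^*) + p_m^*(\hat{\vec v}^*,\hat{\vec B}^*) > 0$, which is exactly what we need.

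The bulk of the work is the component-by-component verification of these two identities, which is routine once one uses orthogonality of $\vec T_3$ repeatedly. For $p_m^*$ it is immediate: both $(1-|\vec v^*|^2)|\vec B^*|^2$ and $(\vec v^*\cdot\vec B^*)^2$ are invariant under replacing $(\vec v^*,\vec B^*)$ by $(\vec T_3^\top\vec v^*,\vec T_3^\top\vec B^*)$ because $\vec T_3$ preserves norms and inner products. For $\vec n^*$ the first and last entries ($-\sqrt{1-|\vec v^*|^2}$ and $1$) are norm-dependent scalars and pair against the scalar components $D$ and $E$ unchanged. The middle blocks require checking that $-\vec m\vec T_3^\top\cdot\vec v^* = -\vec m\cdot(\vec T_3^\top\vec v^*)$ and that the magnetic-field block $-(1-|\vec v^*|^2)\vec B^* - (\vec v^*\cdot\vec B^*)\vec v^*$, paired with $\vec T_3\vec B$, equals the same expression with hats paired with $\vec B$; this is again a direct consequence of $\langle \vec T_3 x, y\rangle = \langle x, \vec T_3^\top y\rangle$ and $\vec v^*\cdot\vec B^* = (\vec T_3^\top\vec v^*)\cdot(\vec T_3^\top\vec B^*)$. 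I expect the only mildly delicate point to be bookkeeping of the row/column conventions for $\vec m$ and the placement of $\vec T_3$ versus $\vec T_3^\top$, so that the substitution $(\vec v^*,\vec B^*)\mapsto(\vec T_3^\top\vec v^*,\vec T_3^\top\vec B^*)$ is consistently the right one; there is no real analytic obstacle. Since this result is attributed to Ref.~\cite{WuTangM3AS2017}, one could alternatively simply cite that proof, but the self-contained argument above is short enough to include.
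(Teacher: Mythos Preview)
Your proposal is correct and follows exactly the approach the paper indicates: the corollary is stated as an immediate consequence of the second equivalent definition (Theorem~\ref{theo:RMHD:CYcondition:VecN}), with the orthogonal transformation absorbed into the test parameters $(\vec v^*,\vec B^*)$. The paper itself gives no detailed proof and simply cites Ref.~\cite{WuTangM3AS2017}, so your self-contained verification via the substitution $(\vec v^*,\vec B^*)\mapsto(\vec T_3^\top\vec v^*,\vec T_3^\top\vec B^*)$ is precisely what is intended.
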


\subsection{Generalized Lax-Friedrichs splitting properties} \label{sec:GLFs}

The section presents the generalized LxF splitting properties of the admissible state set $\mathcal G$. As revealed in Ref.\ \cite{WuTangM3AS2017}, the LxF splitting property
\begin{align}\label{eq:LxFprop}
\mbox{$\vec U \pm \alpha^{-1} \vec F_i (\vec U) \in {\mathcal G}$ for all  $\vec U \in {\mathcal G}$,~$\alpha \ge 1$,}
\end{align}
does not always hold for a nonzero magnetic field.
Therefore, we would
like to seek some alternative properties which are weaker than \eqref{eq:LxFprop}. By considering the
convex combination of some LxF splitting terms, we discover the generalized LxF splitting properties of $\mathcal G$ under some ``discrete divergence-free'' condition for the magnetic
field.
The following constructive inequality plays a pivotal role in establishing the generalized LxF splitting properties.

\begin{lemma}\label{theo:RMHD:LLFsplit}
	If $\vec U \in {\mathcal G}$, then
	for any $\theta \in [-1,1]$ and ${\vec B}^*$, $\vec v^*$ $\in \mathbb{R}^3$ with $|\vec v^*|<1$
	it holds
	\begin{equation}\label{eq:RMHD:LLFsplit}
	\big( \vec U + \theta \vec F_i(\vec U) \big) \cdot \vec n^* +  p_{m}^* + \theta \big( v_{i}^* p_{m}^* - B_i (\vec v^* \cdot \vec B^*)\big)\ge 0,
	\end{equation}
	where $i\in\{1,2,\cdots,d\}$, and $\vec n^*$ and $p_{m}^*$ are defined in \eqref{eq:RMHD:vecns} and \eqref{eq:RMHD:vecns2}, respectively.
\end{lemma}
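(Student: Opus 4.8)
The plan is to split the RMHD conserved vector and flux into a hydrodynamic (RHD) block and a purely magnetic remainder, to reduce the claim to $\theta=\pm1$ by linearity, and then to treat the two blocks separately: the hydrodynamic block via the (known) Lax--Friedrichs splitting property of the RHD admissible set, and the magnetic remainder by completing the square.

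Let $(\rho,\vec v,\vec B,p)$ be the primitive variables of $\vec U\in{\mathcal G}$, so that $\rho>0$, $p>0$, $|\vec v|<1$, $W=1/\sqrt{1-|\vec v|^2}\ge1$, and $h=h(p,\rho)\ge1$ (indeed $e\ge 0$, cf.\ the Remark). Writing $\vec m=\rho hW^2\vec v+\big(|\vec B|^2\vec v-(\vec v \cdot \vec B)\vec B\big)$ and $E=(\rho hW^2-p)+\big(|\vec B|^2-p_m\big)$, and inspecting $\vec F_i(\vec U)$ componentwise, one checks that $\vec U$ and $\vec F_i(\vec U)$ split additively into a hydrodynamic part --- the conserved vector $\vec U^{\mathrm{f}}:=(\rho W,\,\rho hW^2\vec v,\,\rho hW^2-p)^{\top}$ and $x_i$-flux $\vec F_i^{\mathrm{f}}$ of the RHD state with density $\rho$, velocity $\vec v$, pressure $p$ and enthalpy $h$ --- and a magnetic part that depends only on $\vec v$ and $\vec B$. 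Since the left-hand side of \eqref{eq:RMHD:LLFsplit} is affine in $\theta$, it is enough to treat $\theta=1$ and $\theta=-1$; for these values the left-hand side equals ${\mathcal A}_{\pm}+{\mathcal Q}_{\pm}$, where ${\mathcal A}_{\pm}:=(\vec U^{\mathrm{f}}\pm\vec F_i^{\mathrm{f}})\cdot\vec n^{*,\mathrm{f}}$ with the reduced test vector $\vec n^{*,\mathrm{f}}:=(-\sqrt{1-|\vec v^*|^2},\,-\vec v^*,\,1)^{\top}$, and ${\mathcal Q}_{\pm}$ is the remaining expression; apart from $p_m^*\pm\big(v_i^*p_m^*-B_i(\vec v^* \cdot \vec B^*)\big)$, it contains only terms bilinear in $(\vec v,\vec B)$ and $(\vec v^*,\vec B^*)$ --- in particular no $\rho$, $p$ or $h$.

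For the hydrodynamic block, $|v_i|\le|\vec v|<1$ makes the first component of $\vec U^{\mathrm{f}}\pm\vec F_i^{\mathrm{f}}$ positive, and by the Lax--Friedrichs splitting property of the RHD admissible set for a general EOS (established in Refs.\ \cite{WuTang2017ApJS,Wu2017}, which use the EOS condition \eqref{eq:hcondition1}; for the ideal EOS see Ref.\ \cite{WuTang2015}) the state $\vec U^{\mathrm{f}}\pm\vec F_i^{\mathrm{f}}$ is RHD-admissible; the RHD counterpart of the second equivalent definition \eqref{eq:RMHD:CYcondition:VecNG1} then gives ${\mathcal A}_{\pm}\ge0$ for every $\vec v^*$ with $|\vec v^*|<1$. (Alternatively, ${\mathcal A}_{\pm}\ge0$ can be shown directly using the relativistic Cauchy--Schwarz inequality $WW^*(1-\vec v \cdot \vec v^*)\ge1$, the condition \eqref{eq:hcondition1}, and $W\ge1$.) For the magnetic remainder, the key point is that ${\mathcal Q}_{\pm}$ involves neither $\rho$, nor $p$, nor $h$, so it coincides with the magnetic quantity treated in the ideal-EOS analysis of Ref.\ \cite{WuTangM3AS2017}, and the argument there applies verbatim: after expanding ${\mathcal Q}_{\pm}$ in $(\vec v,\vec B,\vec v^*,\vec B^*)$, the terms linear in $\vec v^*$ and in $\vec B^*$ are bounded below by the Cauchy--Schwarz inequality (using $|\vec v^*|<1$), and the expression that remains is reorganised, by completing the square, into a manifestly nonnegative quadratic form; the precise form of $\vec n^*$ in \eqref{eq:RMHD:vecns} and of $p_m^*$ in \eqref{eq:RMHD:vecns2} is exactly what makes this assembly succeed (already the simplest non-trivial case $\vec B^*=\vec 0$, $\theta=0$ exhibits the mechanism, reducing after the Cauchy--Schwarz estimate to $(a-b)^2\ge0$ with $a=|\vec B|^2$ and $b=|\vec B|^2|\vec v|^2-(\vec v \cdot \vec B)^2$). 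Adding ${\mathcal A}_{\pm}\ge0$ and ${\mathcal Q}_{\pm}\ge0$ yields \eqref{eq:RMHD:LLFsplit}.

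I expect the square-completion in ${\mathcal Q}_{\pm}$ to be the main obstacle: one must keep simultaneous track of the $\theta=\pm1$ sign, of the mixed scalars $\vec v^* \cdot \vec B^*$, $B_i$, $v_i^*$, and of which pieces are absorbed by Cauchy--Schwarz, and then recognise the leftover as a genuine sum of squares. A secondary point requiring care is the identification of the hydrodynamic sub-block: although the $\vec v$ and $p$ recovered from $\vec U$ are EOS-dependent, the additive splitting of $\vec U$ and $\vec F_i(\vec U)$ into hydrodynamic and magnetic parts is a purely algebraic rearrangement valid for every EOS.
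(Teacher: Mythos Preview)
Your proposal is correct and follows essentially the same route as the paper. Both arguments split the left-hand side into a hydrodynamic block (depending on $\rho,h,p,\vec v$) and a purely magnetic remainder (depending only on $\vec v,\vec B,\vec v^*,\vec B^*$), prove the hydrodynamic block is nonnegative using the EOS condition \eqref{eq:hcondition1}, and defer the EOS-independent magnetic remainder to Ref.\ \cite{WuTangM3AS2017}. The minor presentational differences are: you reduce to $\theta=\pm1$ at the outset by affinity, while the paper keeps $\theta$ and bounds the hydrodynamic part by $\min\{H_+,H_-\}$ (your $\mathcal A_\pm$ coincides with the paper's $H_\pm$); and you obtain $\mathcal A_\pm\ge0$ by citing the RHD Lax--Friedrichs splitting together with the RHD second equivalent definition, whereas the paper proves $H_\pm\ge0$ directly via Cauchy--Schwarz and \eqref{eq:hcondition1} --- a route you also mention as the alternative.
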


\begin{proof}
	Without loss of generality, let us focus on the case of $i=1$, and show that
	\begin{equation}\label{eq:RMHD:LLFsplit-thz}
	{\mathcal H} (\rho,h,\vec v, \vec B,\vec v^*,\vec B^*,\theta ) := \big( \vec U + \theta \vec F_1(\vec U) \big) \cdot \vec n^* + (1+\theta v_{1}^* ) p_{m}^* - \theta B_1 (\vec v^* \cdot \vec B^*)>0.
	\end{equation}
	Note that ${\mathcal H}$ can be expressed as
	\begin{align*}
	{\mathcal H} & =  (1+\theta v_1)  \left( \rho h W^2 ( 1-  \vec {  v} \cdot {\vec v^*}  ) -\rho W (W^{-1})^* \right) - (1+\theta v_1^*) p(\rho,h)
	+ {\mathcal H}_0 (\vec v, \vec B,\vec v^*,\vec B^*,\theta )
	\\
	& \ge \min \{ H_+, H_- \} + {\mathcal H}_0 (\vec v, \vec B,\vec v^*,\vec B^*,\theta ),
	\end{align*}
	where ${\mathcal H}_0 (\vec v, \vec B,\vec v^*,\vec B^*,\theta ):= \mathop {\lim }\limits_{\rho \to 0^+ } \mathop {\lim }\limits_{h \to 1^+ } {\mathcal H} (\rho,h,\vec v, \vec B,\vec v^*,\vec B^*,\theta ) $, and
	\begin{align*}
	H_\pm &:=  (1 \pm v_1)  \Big( \rho h W^2 ( 1-  \vec {  v} \cdot {\vec v^*}  ) -\rho W (W^{-1})^* \Big) - (1 \pm v_1^*) p
	\\ & = (1\pm v_1) \rho h W^2 - p - \Big[  \Big( (1\pm v_1) \rho h W^2 v_1 \pm p \Big) v_1^*
	\\
	& \quad
	+ (1\pm v_1) \Big( \rho h W^2 v_2 v_2^*
	+ \rho h W^2 v_3 v_3^* + \rho W (W^{-1})^*  \Big)  \Big]
	\\
	& \ge (1\pm v_1) \rho h W^2 - p - \bigg( \Big( (1\pm v_1) \rho h W^2 v_1 \pm p \Big)^2
	\\
	& \quad + (1\pm v_1)^2 \Big( \rho^2 h^2 W^4 (v_2^2+v_3^2) + \rho ^2 W^2  \Big) \bigg)^{\frac12},
	\end{align*}
	in which the inequality follows from the Cauchy-Schwarz inequality, and $\sqrt{ |\vec v^*|^2 + (W^{-2})^*  }=1$ has been used. Note that
	\begin{align*}
	(1\pm v_1) \rho h W^2 - p
	&\ge (1 - |\vec v|) \left( \frac{\rho h}{1-|\vec v|^2} \right) - p = \frac{\rho h}{1+|\vec v|}  - p
	\\
	&> \frac{1}{2} \left(\rho h  - 2p\right) \overset{\eqref{eq:hcondition1}} {\ge} \frac{1}{2} \left( \sqrt{\rho^2+p^2}  - p\right) > 0,
	\end{align*}
	and
	\begin{align*}
	& \Big( (1\pm v_1) \rho h W^2 - p \Big)^2 - \bigg( \Big( (1\pm v_1) \rho h W^2 v_1 \pm p \Big)^2
	\\
	&+ (1\pm v_1)^2 \Big( \rho^2 h^2 W^4 (v_2^2+v_3^2) + \rho ^2 W^2  \Big) \bigg)
	\\
	& = (1\pm v_1)^2 W^2 \Big( (\rho h - p)^2 - (\rho^2+p^2) \Big) \overset{\eqref{eq:hcondition1}} {\ge} 0.
	\end{align*}
	Thus $H_\pm \ge 0$. Therefore, it only needs to show ${\mathcal H}_0 \ge 0$.
	Note that
	${\mathcal H}_0$
	only involves ${\bf v},{\bf B}$, and does not depend on the EOS.
	Therefore, the proof of ${\mathcal H}_0 \ge 0$ is, although very technical, but the same
	as the ideal EOS case in Ref.\ \cite{WuTangM3AS2017} and omitted here.
\end{proof}

Once the inequality \eqref{eq:RMHD:LLFsplit} is constructed,
the generalized LxF splitting properties are directly followed.
Here we present a general formulation of the property for studying
multi-dimensional PCP schemes on
a general  mesh.
The readers are referred to Ref.\ \cite{WuTangM3AS2017} for several special versions of the properties for
the cases of one- and multi-dimensional Cartesian meshes.

For any vector ${\bm \xi}=(\xi_1,\cdots,\xi_d) \in \mathbb{R}^d$, define the following inner products
\begin{equation}\label{eq:WKLnotation}
\big\langle {\bm \xi}, {\bf B} \big\rangle := \sum_{ \ell =1 }^d \xi_\ell B_\ell, \quad
\big\langle  {\bm \xi}, {\bf F} \big\rangle := \sum_{ \ell =1 }^d {\bf \xi}_\ell {\bf F}_\ell.
\end{equation}

\begin{theorem}[Generalized LxF splitting]\label{theo:RMHD:LLFsplit2Dus} 
	Assume that
	$$
	{\vec U}^{ij}=\left( D^{ij}, {\bf m}^{ij}, {\bf B}^{ij}, E^{ij} \right)^\top \in {\mathcal G}, \quad
	i=1,\cdots,Q,~j=1,\cdots,J,
	$$
	and satisfy the $d$-dimensional  ``discrete divergence free'' condition over a $d$-polytope
	(whose boundary does not intersect itself) with $J$ edges ($d=2$) or faces ($d=3$)
	\begin{equation}\label{eq:descrite2DDIVus}
	\sum\limits_{j = 1}^J {\left[ \sum\limits_{i = 1}^Q \omega _i
		\left\langle {\bm \xi}_j, {\bf B}^{ij} \right\rangle
		\right]} {\mathcal L}_j = 0,
	\end{equation}
	where 	
	${\mathcal L}_j>0$ and ${\bm \xi}_j=\left( \xi_{j}^{(1)},\cdots,\xi_{j}^{(d)} \right)$ are the $(d-1)$-dimensional Lebesgue measure and the unit outward normal vector of the $j$-th side or face of the polytope, respectively, and the sum of all positive numbers
	$\left\{\omega _i \right\}_{i=1}^Q$ equals one.
	Then for all $\alpha \ge c=1$ it holds
	\begin{align*}
	\bar{\vec U}: = \frac{1}{{\sum_{j = 1}^J {{\mathcal L}_j } }}\sum\limits_{j = 1}^J {\left[ {\sum\limits_{i = 1}^Q {\omega _i \Big( {{\vec U}^{ij}  - \alpha ^{ - 1}
					\left\langle {\bm \xi}_j, \vec F (\vec U^{ij} ) \right\rangle
				} \Big)} } \right]} {\mathcal L}_j  \in \overline{\mathcal G}.
	\end{align*}
\end{theorem}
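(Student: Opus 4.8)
The plan is to check that $\bar{\vec U}$ satisfies the two defining conditions of $\overline{\mathcal G}$: its first component $\bar D$ is strictly positive, and $\bar{\vec U}\cdot\vec n^*+p_m^*\ge 0$ for every $\vec v^*,\vec B^*\in\mathbb R^3$ with $|\vec v^*|<1$. Positivity of $\bar D$ is immediate: since the first component of $\vec F_\ell(\vec U)$ is $Dv_\ell$, the first component of $\vec U^{ij}-\alpha^{-1}\langle\bm{\xi}_j,\vec F(\vec U^{ij})\rangle$ is $D^{ij}\bigl(1-\alpha^{-1}\sum_{\ell=1}^d\xi_j^{(\ell)}v_\ell^{ij}\bigr)$, which is strictly positive because $D^{ij}>0$, $\alpha\ge1$, and $\bigl|\sum_{\ell=1}^d\xi_j^{(\ell)}v_\ell^{ij}\bigr|\le|\bm{\xi}_j|\bigl(\sum_{\ell=1}^d(v_\ell^{ij})^2\bigr)^{1/2}\le|\vec v^{ij}|<1$ by Cauchy--Schwarz (using $\vec U^{ij}\in{\mathcal G}$ and $|\bm{\xi}_j|=1$). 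Then $\bar D$, being a convex combination with strictly positive weights $\omega_i{\mathcal L}_j/\sum_j{\mathcal L}_j$ of these positive numbers, is positive.

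For the second condition I would fix $\vec v^*,\vec B^*$ and use the identity (valid since $\sum_i\omega_i=1$)
$$\Bigl(\sum_{j=1}^J{\mathcal L}_j\Bigr)\bigl(\bar{\vec U}\cdot\vec n^*+p_m^*\bigr)=\sum_{j=1}^J{\mathcal L}_j\sum_{i=1}^Q\omega_i\Bigl[\bigl(\vec U^{ij}-\alpha^{-1}\langle\bm{\xi}_j,\vec F(\vec U^{ij})\rangle\bigr)\cdot\vec n^*+p_m^*\Bigr],$$
so that it suffices to bound each bracketed quantity from below. The main step is to collapse the flux combination $\langle\bm{\xi}_j,\vec F(\vec U^{ij})\rangle$ onto a single coordinate direction. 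Viewing $\bm{\xi}_j$ as a unit vector in $\mathbb R^3$ (padding by zeros when $d<3$), choose a $3\times 3$ orthogonal matrix $\vec Q_j$ with $\vec Q_j\bm{\xi}_j=\vec e_1$, put $\vec T_j:={\rm diag}\{1,\vec Q_j,\vec Q_j,1\}$ and $\hat{\vec U}^{ij}:=\vec T_j\vec U^{ij}$. By the orthogonal invariance of the RMHD equations (the same invariance underlying Corollary \ref{lem:RMHD:zhengjiao}) one has $\langle\bm{\xi}_j,\vec F(\vec U^{ij})\rangle=\vec T_j^{-1}\vec F_1(\hat{\vec U}^{ij})$ and $\hat{\vec U}^{ij}\in{\mathcal G}$. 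Setting $\vec v^{**}:=\vec Q_j\vec v^*$, $\vec B^{**}:=\vec Q_j\vec B^*$ and using the formulas \eqref{eq:RMHD:vecns} and \eqref{eq:RMHD:vecns2} together with $|\vec v^{**}|=|\vec v^*|$ and $\vec v^{**}\cdot\vec B^{**}=\vec v^*\cdot\vec B^*$ shows $\vec T_j\vec n^*=\vec n^{**}$ and $p_m^{**}=p_m^*$; hence, $\vec T_j$ being orthogonal, each bracket equals $\bigl(\hat{\vec U}^{ij}-\alpha^{-1}\vec F_1(\hat{\vec U}^{ij})\bigr)\cdot\vec n^{**}+p_m^{**}$.

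Now I would apply Lemma \ref{theo:RMHD:LLFsplit} to $\hat{\vec U}^{ij}\in{\mathcal G}$ with index $i=1$, vectors $\vec v^{**},\vec B^{**}$, and $\theta=-\alpha^{-1}\in[-1,1]$ (admissible since $\alpha\ge1$), observing that the first component of the magnetic field of $\hat{\vec U}^{ij}$ is $(\vec Q_j\vec B^{ij})_1=\langle\bm{\xi}_j,\vec B^{ij}\rangle$ and the first component of $\vec v^{**}$ is $\sum_{\ell=1}^d\xi_j^{(\ell)}v_\ell^*$; the lemma then yields
$$\bigl(\vec U^{ij}-\alpha^{-1}\langle\bm{\xi}_j,\vec F(\vec U^{ij})\rangle\bigr)\cdot\vec n^*+p_m^*\ \ge\ \alpha^{-1}\Bigl(\Bigl(\textstyle\sum_{\ell=1}^d\xi_j^{(\ell)}v_\ell^*\Bigr)p_m^*-\langle\bm{\xi}_j,\vec B^{ij}\rangle\,(\vec v^*\cdot\vec B^*)\Bigr).$$
Substituting this into the identity above, the right-hand side vanishes: $\sum_j{\mathcal L}_j\sum_{\ell}\xi_j^{(\ell)}v_\ell^*=0$ because the area-weighted outward normals $\{{\mathcal L}_j\bm{\xi}_j\}$ of a closed, non-self-intersecting polytope sum to the zero vector, while $\sum_j{\mathcal L}_j\sum_i\omega_i\langle\bm{\xi}_j,\vec B^{ij}\rangle=0$ is exactly the discrete divergence-free condition \eqref{eq:descrite2DDIVus}. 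Therefore $\bigl(\sum_j{\mathcal L}_j\bigr)(\bar{\vec U}\cdot\vec n^*+p_m^*)\ge 0$, and since $\sum_j{\mathcal L}_j>0$ we conclude $\bar{\vec U}\cdot\vec n^*+p_m^*\ge 0$; combined with $\bar D>0$ this gives $\bar{\vec U}\in\overline{\mathcal G}$. The genuinely delicate estimate is already packaged inside Lemma \ref{theo:RMHD:LLFsplit}, so the real obstacle in this proof is the per-face reduction — rotating each face so that $\langle\bm{\xi}_j,\vec F\rangle$ becomes a pure $x_1$-flux while keeping the pair $(\vec n^*,p_m^*)$ inside the admissible family — after which the closed-polytope identity and the discrete divergence-free condition together annihilate the two leftover terms.
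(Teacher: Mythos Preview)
Your proof is correct and follows essentially the same approach as the paper: reduce $\langle\bm{\xi}_j,\vec F(\vec U^{ij})\rangle$ to a pure $x_1$-flux via an orthogonal change of frame, apply Lemma~\ref{theo:RMHD:LLFsplit} with $\theta=-\alpha^{-1}$, and then kill the two residual terms using the closed-polytope identity $\sum_j\mathcal{L}_j\bm{\xi}_j=0$ and the discrete divergence-free condition~\eqref{eq:descrite2DDIVus}. The only cosmetic differences are that the paper writes out the rotation explicitly in spherical coordinates for $d=3$ (deferring $d=2$ to an earlier reference) whereas you handle all $d$ at once by padding $\bm{\xi}_j$ and invoking an abstract orthogonal $\vec Q_j$, and the paper verifies the $\bar D>0$ constraint last rather than first.
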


Before the proof, we would like to provide
the specific meaning of the superscript $i,j$ on the vector ${\bf U}$, which will become clear in Section \ref{sec:scheme}.
The index $j$ represents the $j$-th edge or face of the polytope,
while $i$ stands for the $i$-th (quadrature) point on each edge or face. That is, ${\vec U}^{ij}$ denotes the approximate value of
${\bf U}$ at the $i$-th quadrature point on the $j$-th edge or face of the polytope, and $\omega_i$ corresponds to the associated quadrature weight at
that point.

\begin{proof}
	The conclusion for $d=2$ has been proved in Ref.\ \cite{WuTangM3AS2017}. The following proof is focused on the case of $d=3$. Using the spherical coordinates, one can represent the unit vector ${\bm \xi}_j$ as
	$$
	{\bm \xi}_j = ( \sin \theta_j \cos \varphi_j, \sin \theta_j \sin \varphi_j, \cos \theta_j  ),
	$$
	where the angles $ \theta_j \in [0,\pi]$ and $ \varphi_j \in [0,2\pi)$, $j=1,2,\cdots,J$.
	The rotational invariance property of the 3D RMHD equations \eqref{eq:RMHD1D} yields
	$$
	\xi_j^{(1)} \vec F_1 (\vec U^{ij} )  + \xi_j^{(2)} \vec F_2 (\vec U^{ij} )  + \xi_j^{(3)} \vec F_3 (\vec U^{ij} )  = \vec T^{-1}_j \vec F_1(\vec T_j {\vec U}^{ij}),
	$$
	where $\vec T_j:={\rm diag} \left\{1,\vec T_{3,j},\vec T_{3,j},1 \right\}$ with the rotational matrix $\vec T_{3,j}$ defined by
	
	$$
	{\vec T}_{3,j} :=  \begin{pmatrix}
	\sin \theta_j \cos \varphi_j~ & ~\sin \theta_j \sin \varphi_j~ & ~\cos \theta_j   \\
	-\sin\varphi_j~ & ~\cos \varphi_j~ & ~0 \\
	-\cos \theta_j \cos \varphi_j~ & ~-\cos \theta_j \sin \varphi_j~ & ~\sin \theta_j
	\end{pmatrix}.
	$$
	For each $j$ and any $\vec B^*,\vec v^*\in {\mathbb{R}}^3$ with $|\vec v^*|<1$, let $\hat {\vec v}^*= {\vec v}^* \vec T_{3,j}$ and $\hat {\vec B}^*= {\vec B}^* \vec T_{3,j}$. One has $|\vec {\hat v}^*| = |\vec v|<1,~\vec {\hat v}^* \cdot \vec {\hat B}^* = \vec v^* \cdot \vec B^*,~\hat p_m^* =   p_m^*$, and $\hat {\vec n}^* = \vec T_{j} {\vec n}^*$. Utilizing Lemma \ref{theo:RMHD:LLFsplit} for $\vec T_j \vec U^{ij}$, $\hat {\vec v}^*$,  and $\hat {\vec B}^*$ gives
	\begin{align} \nonumber
	0 & \le \left( \vec T_j \vec U^{ij} - \alpha^{-1}  \vec F_1(\vec T_j {\vec U}^{ij}) \right) \cdot \hat{\vec n}^* + \hat p_m^*
	\\ \nonumber
	& \quad- \alpha^{-1} \big( \hat v_1^* \hat p_m^* - \left(  \bm \xi_j \cdot \vec B^{ij} \right) (\hat {\vec v}^* \cdot \hat {\vec B}^* ) \big) \\ \nonumber
	& = \left(  \vec U^{ij} - \alpha^{-1} \vec T_j^{-1}  \vec F_1(\vec T_j {\vec U}^{ij}) \right) \cdot {\vec n}^* +  p_m^*
	\\
	&\quad  - \alpha^{-1} \big(  \left( \bm \xi_j \cdot {\vec v}^*  \right)  p_m^* - \left( \bm \xi_j \cdot \vec B^{ij} \right) ( {\vec v}^* \cdot  {\vec B}^* ) \big),\label{eq:wkl000}
	\end{align}
	where the orthogonality of $\vec T_j$ has been used. Hence, one has
	\begin{align*}
	\bar{\vec U} \cdot \vec n^* + p_{m}^*
	&  = \frac{1}{{\sum\limits_{j = 1}^J { {\mathcal L}_j } }}\sum\limits_{j = 1}^J {\left[ {\sum\limits_{i = 1}^{{Q}} {\omega _i \Big( \left( {{\vec U}^{ij}  - \alpha ^{ - 1} \vec T_j^{-1}  \vec F_1(\vec T_j {\vec U}^{ij})  } \right) \cdot  \vec n^* + p_{m}^*   \Big)    } } \right]} {\mathcal L}_j\\
	&  \overset{\eqref{eq:wkl000}} {\ge}
	\frac{1}{\alpha{\sum\limits_{j = 1}^J { {\mathcal L}_j } }}\sum\limits_{j = 1}^J {\left[ {\sum\limits_{i = 1}^{{ Q}} {\omega _i \Big(  \left( \bm \xi_j \cdot {\vec v}^* \right)  p_m^* - \left( \bm \xi_j \cdot \vec B^{ij} \right) ( {\vec v}^* \cdot  {\vec B}^* )  \Big) } } \right]} {\mathcal L}_j\\
	&  \overset{\eqref{eq:descrite2DDIVus}} {=}   \frac{p_m^*}{\alpha{\sum\limits_{j = 1}^J { {\mathcal L}_j } }}\sum\limits_{j = 1}^J {\left[ {\sum\limits_{i = 1}^{{Q}} {\omega _i   \left( \bm \xi_j \cdot {\vec v}^*  \right)    } } \right]} {\mathcal L}_j
	=  \frac{p_m^*}{\alpha{\sum\limits_{j = 1}^J { {\mathcal L}_j } }}\sum\limits_{j = 1}^J    \left( \bm \xi_j \cdot {\vec v}^*  \right)  {\mathcal L}_j =0,
	\end{align*}
	which implies that {$\bar{\vec U}$} satisfies the second constraint in  $\overline{\mathcal G}$.
	On the other hand,  $\bar{\vec U}$  satisfies the first constraint in $\overline{\mathcal G}$  because
	\begin{align*}
	& \frac{1}{{\sum\limits_{j = 1}^J { {\mathcal L}_j } }}\sum\limits_{j = 1}^J {\left[ {\sum\limits_{i = 1}^{{ Q}} {\omega _i D^{ij} \Big( { 1 - \alpha ^{ - 1} \big( \bm \xi_j \cdot {\vec v}^{ij} \big)} \Big)} } \right]} {\mathcal L}_j
	\\
	&
	\quad \ge
	\frac{1}{{\sum\limits_{j = 1}^J { {\mathcal L}_j } }}\sum\limits_{j = 1}^J {\left[ {\sum\limits_{i = 1}^{{ Q}} {\omega _i D^{ij} \Big(  1 - \alpha ^{ - 1}  |{\vec v}^{ij}|   \Big)} } \right]} {\mathcal L}_j  >0.
	\end{align*}
	The proof is completed.
\end{proof}

\section{Physical-constraints-preserving schemes}\label{sec:scheme}
This section applies the above properties of the admissible state set
$\mathcal G$ to  exploring and analyzing the PCP schemes for the $d$-dimensional ($d\ge 2$)
special RMHD equations \eqref{eq:RMHD1D}.
For the sake of convenience, the symbols $\bf x$  will be used to represent
the independent variables $(x_1,x_2,\cdots,x_d)$ in \eqref{eq:RMHD1D}.
It is worth noting that, {once the above properties of $\mathcal G$ are established},
the PCP analysis
will not directly involve the EOS.
{For such a reason, the 1D analysis
	in Ref.\ \cite{WuTangM3AS2017} can directly apply to
	the 1D case for a general EOS.}

Assume that the $d$-dimensional spatial domain is divided into a mesh ${\mathcal M}$ with cells $\{ {\mathcal I}_k\}$, which can be
general (non-self-intersecting) $d$-dimensional polytopes.
Moreover, the mesh can be unstructured. Let ${\mathcal N}_k$ denote the index set of all the neighboring cells of $ {\mathcal I}_k$.
For each $j \in {\mathcal N}_k$, let ${\mathcal E}_{kj}$ be the boundary of $ {\mathcal I}_k$ sharing with its neighboring cell $ {\mathcal I}_j$, i.e.,
${\mathcal E}_{kj} =  \partial{\mathcal I}_k \cap \partial{\mathcal I}_j$,
and $\bm  {\xi}_{kj}= \big( \xi_{kj}^{(1)}, \cdots, \xi_{kj}^{(d)} \big)$ be the unit normal vector
of ${\mathcal E}_{kj}$ pointing from ${\mathcal I}_k$ to ${\mathcal I}_j$.
The time interval is also divided into mesh $\{t_0=0, t_{n+1}=t_n+\Delta t_{n}, n\geq 0\}$
with the time step-size $\Delta t_{n}$ determined by the CFL-type condition.
Let $\bar{\vec U}_k^n$ be the numerical approximation to the cell-averaged value of $\vec U$
over the cell ${\mathcal I}_k$ at $t=t_n$. We aim at seeking numerical schemes whose solutions
$\{\bar{\vec U}_k^n\}$ always belong to  the admissible state set $\mathcal G$.

\subsection{First-order accurate scheme}\label{sec:FirstOrder}

Consider the following first-order accurate scheme on the mesh ${\mathcal M}$  
\begin{equation}\label{eq:1stscheme}
\bar{\vec U}_{k}^{n+1}
= \bar{\vec U}_{k}^{n} - \frac{\Delta t_n}{|{\mathcal I}_k|}   \sum_{j \in {\mathcal N}_k}
|{\mathcal E}_{kj}| \hat{\vec F}_{kj},
\end{equation}
where
$|{\mathcal I}_{kj}|$ and $|{\mathcal E}_{kj}|$ denote the $d$- and ($d-1$)-dimensional Hausdorff measures of ${\mathcal I}_k$ and ${\mathcal E}_{kj}$ respectively. The numerical flux $\hat{\vec F}_{kj}$ in \eqref{eq:1stscheme} is taken as the LxF type flux
\begin{equation}\label{eq:LFflux}
\hat{\vec F}_{kj} = \hat{\vec F} (  \bar{\vec U}_k^n , \bar{\vec U}_j^n  ; {\bm \xi}_{kj} ) := \frac12 \left(  \Big\langle
{\bm \xi}_{kj},  {\vec F} ( \bar{\vec U}_k^n )
+ {\vec F} ( \bar{\vec U}_j^n ) \Big\rangle - \alpha ( \bar{\vec U}_j^n - \bar{\vec U}_k^n ) \right),
\end{equation}
where $\alpha$ is an appropriate upper bound of the spectral radius of Jacobian matrix
$\sum_{\ell=1}^d \xi_{kj}^{(\ell)} \partial {\vec F}_\ell ({\vec U})/ \partial {\vec U} $
and can be taken as $\alpha \ge c = 1$. If $d=2$ and the mesh is a uniform rectangular mesh,
then the scheme \eqref{eq:1stscheme} becomes the 2D first-order scheme in Ref.\ \cite{WuTangM3AS2017}.

If $\bar {\vec U}_{k}^n$  belongs to $\mathcal G$ for all $k$,
but the magnetic field $\bar{\vec B}_{k}^n$ is not divergence-free
in some discrete sense,
then the solution $\bar {\vec U}_{k}^{n+1}$ of \eqref{eq:1stscheme}
does not always belong to $\mathcal G$, 
see the following theorem.

\begin{theorem} \label{theo:disprove}
	For a general EOS and any given mesh $\mathcal M$, under the CFL condition
	\begin{equation}\label{eq:CFL:LF2D000}
	0<    \frac{ \alpha \Delta t_n }{2 |{\mathcal I}_k|  } \sum_{j \in {\mathcal N}_k} |{\mathcal E}_{kj}|  <  1,
	\end{equation}
	there always exists a set of admissible states $\{\bar{\vec U}_{k}^{n},\forall k\}$ such that
	the solution $ \bar {\vec U}_{k}^{n+1}$ of \eqref{eq:1stscheme} does not belong to ${\mathcal G}$. In other words, the admissibility of $\bar{\vec U}_{k}^{n}\in {\mathcal G}, \forall k$, does not in general guarantee that $ \bar {\vec U}_{k}^{n+1} \in {\mathcal G}$, $\forall k$.
\end{theorem}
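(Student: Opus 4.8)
The plan is to construct an explicit counterexample: a configuration of admissible states $\{\bar{\vec U}_k^n\}$ for which the divergence of the magnetic field is deliberately made nonzero in every reasonable discrete sense, and then to show that the resulting $\bar{\vec U}_k^{n+1}$ produced by \eqref{eq:1stscheme} fails at least one of the constraints defining $\mathcal G$. Since $\mathcal G=\mathcal G_0$ by Theorem \ref{theo:RMHD:CYconditionFINAL2}, and since by Remark \ref{rem:importantRemark} the constraint $\Psi(\vec U)>0$ splits into $\hat q(\vec U)>0$ and $\tilde q(\vec U)>0$, the cleanest route is to arrange the data so that $\bar{\vec U}_k^{n+1}$ violates $\tilde q>0$ (or, failing that, $\hat q>0$), while the "easy" constraints $D>0$ and $q>0$ remain satisfied because they behave well under the convex-combination structure of the LxF update. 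The point of working with $\tilde q$ rather than directly with $\Psi$ is that $\tilde q$ is a polynomial and hence its sign can be checked by elementary algebra on the chosen numbers.

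First I would reduce to the lowest-dimensional and simplest-geometry instance in which the obstruction already appears, namely $d=2$ on a small patch of a uniform Cartesian mesh (which is a legitimate special case of a general mesh $\mathcal M$), focusing on a single target cell ${\mathcal I}_k$ with its (four) neighbors. I would choose the states in the neighboring cells to have a common density, pressure, and velocity — say at rest, $\vec v=\vec 0$, so that all the flux contributions except the magnetic-tension and magnetic-pressure pieces vanish or simplify — and let only the magnetic field vary from neighbor to neighbor, chosen so that $\sum_{j\in{\mathcal N}_k}|{\mathcal E}_{kj}|\,\langle\bm\xi_{kj},\vec B_j\rangle\neq 0$, i.e. the standard central discrete divergence at ${\mathcal I}_k$ is nonzero. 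With $\vec v=\vec 0$ in all cells, the update \eqref{eq:1stscheme}--\eqref{eq:LFflux} collapses to $\bar{\vec U}_k^{n+1}=\bar{\vec U}_k^n-\frac{\Delta t_n}{|{\mathcal I}_k|}\sum_j|{\mathcal E}_{kj}|\hat{\vec F}_{kj}$, where each $\hat{\vec F}_{kj}$ is the average of two flux vectors minus $\tfrac\alpha2(\bar{\vec U}_j^n-\bar{\vec U}_k^n)$; the only surviving nontrivial components come from the magnetic stress terms $-B_i(W^{-2}\vec B+(\vec v\cdot\vec B)\vec v)+p_{tot}\vec e_i$ and $v_i\vec B-B_i\vec v$. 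I would compute $D^{n+1},\vec m^{n+1},\vec B^{n+1},E^{n+1}$ for this ansatz, verify $D^{n+1}>0$ and $q(\bar{\vec U}_k^{n+1})>0$ (these follow from $\alpha\ge1$, the CFL bound \eqref{eq:CFL:LF2D000}, and the convexity/LxF-type estimates already available in the paper), and then plug the numbers into $\tilde q$.

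The main obstacle — and the part that takes actual work — is tuning the magnitudes. One has to make the magnetic "kick" delivered to $\bar{\vec U}_k^{n+1}$ large enough, relative to the ambient $(\rho,p)$, that the target state is pushed out of $\mathcal G$, while keeping each $\bar{\vec U}_j^n$ genuinely inside $\mathcal G$; because $\mathcal G$ is open and its magnetic directions are nonlinearly coupled to the other variables (as emphasized around \eqref{eq:RMHD:definitionG2}), this balance is delicate. The trick is to take the pressure $p$ and density $\rho$ of the background small and the jumps in $\vec B$ across ${\mathcal I}_k$'s faces of order one but arranged antisymmetrically so their contributions to $\vec m^{n+1}$ and $E^{n+1}$ do not cancel; then by continuity one can fix a concrete choice of parameters (a specific $p$, $\rho$, the $\vec B_j$'s, $\alpha$, and $\Delta t_n$ satisfying \eqref{eq:CFL:LF2D000}) for which a direct evaluation gives $\tilde q(\bar{\vec U}_k^{n+1})\le 0$, hence $\bar{\vec U}_k^{n+1}\notin\mathcal G$. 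Finally I would remark that since a uniform Cartesian patch is an instance of a general mesh, and since the construction used no feature of the ideal EOS (only $p>0,\rho>0,v<1$ and the membership criterion of Theorem \ref{theo:RMHD:CYconditionFINAL2}, all valid for a general EOS), the same counterexample works in the stated generality; the higher-dimensional case $d=3$ is handled identically by embedding the same planar configuration.
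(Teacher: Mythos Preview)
Your plan shares the right instincts with the paper---use the polynomial $\tilde q$ from Remark~\ref{rem:importantRemark} as the witness, and push the ambient $(\rho,p)$ to be small so the magnetic ``kick'' dominates---but there is a genuine logical gap in the reduction step.

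The theorem asserts that \emph{for every} mesh $\mathcal M$ there exist admissible data breaking the PCP property. Your argument proves only that \emph{there exists} a mesh (a uniform Cartesian patch) on which such data can be found; the sentence ``since a uniform Cartesian patch is an instance of a general mesh \ldots\ the same counterexample works in the stated generality'' has the quantifiers backwards. On a mesh whose cell $\mathcal I_k$ has faces with arbitrary normals $\bm\xi_{kj}$, your four-neighbor, axis-aligned construction does not carry over directly, and the ``embedding'' you describe for $d=3$ does not address this either. The paper handles the arbitrary mesh by a different device: it fixes \emph{one} neighboring face $\mathcal E_{kj_*}$, uses the rotational invariance of \eqref{eq:RMHD1D} (Corollary~\ref{lem:RMHD:zhengjiao}) to rotate the data so that $\bm\xi_{kj_*}$ plays the role of the $x_1$-direction, and takes all other neighbors equal to the cell value $\bar{\vec U}_k^n$ so their flux contributions cancel. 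This reduces the update on an arbitrary polytope to a single one-dimensional LxF step with a parameter $\theta=\frac{\alpha\Delta t_n}{2|\mathcal I_k|}|\mathcal E_{kj_*}|\in(0,1)$, after which an explicit calculation is possible.

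Two smaller points. First, the paper does \emph{not} take $\vec v=\vec 0$; it uses $v_1=\tfrac12$, which is what makes the resulting $\bar{\vec U}_k^{n+1}$ come out in a form where $\tilde q$ can be evaluated cleanly. Second, rather than fixing concrete numbers and verifying $D>0$, $q>0$, $\tilde q\le 0$ one by one, the paper argues by contradiction: it assumes $\bar{\vec U}_k^{n+1}(\tau,\epsilon)\in\mathcal G$ for all $\epsilon,\tau>0$, passes to the limit $\epsilon,\tau\to 0^+$ using continuity of $\tilde q$, and obtains the closed-form value $\tfrac{27}{64}\theta^7(4\theta+1)^2(\theta-2)<0$, a contradiction. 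This avoids the ``delicate tuning'' you anticipate.
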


\begin{proof}
	It is proved by contradiction. Assume that
	$\bar{\vec U}_{k}^{n}\in {\mathcal G}$, $\forall k$, always
	ensure $ \bar {\vec U}_{k}^{n+1} \in {\mathcal G}$, $\forall k$.
	Without loss of generality, we take $\alpha=c=1$.
	For any $\epsilon,\tau >0$, define the following admissible primitive variables
\begin{align*}
	& \hat {\bf V} := (\hat \rho, \hat {\bf v},\hat{ \bf B}, \hat p)^\top = (\epsilon,0.5,0,0,0,0,0,\tau)^\top,
	\\
	&
	\tilde {\bf V} := (\tilde \rho, \tilde {\bf v},\tilde{ \bf B}, \tilde p)^\top = (\epsilon,0.5,0,0,1,0,0,\tau)^\top,
\end{align*}
	and	let $\hat {\bf U} := {\bf U}(\hat{\bf V}) \in {\mathcal G}$
	and $\tilde {\bf U} := {\bf U}(\tilde{\bf V}) \in {\mathcal G}$ be the corresponding conservative vectors, and ${\mathcal I}_{j_*}$ be a neighboring cell of ${\mathcal I}_k$, i.e., $j_* \in {\mathcal N}_k$.
	

	The rotational invariance property of $d$-dimensional RMHD equations \eqref{eq:RMHD1D} yields
	$$
	\Big\langle {\bm \xi}_{kj_*}, \vec F (\vec U ) \Big\rangle  = \vec T^{-1} \vec F_1(\vec T {\vec U}),
	$$
	for any ${\bf U}\in \mathcal {G}$,
	where  the matrix $\vec T:={\rm diag} \left\{1,\vec T_{3},\vec T_{3},1 \right\}$,
	with the rotational matrix ${\vec T}_{3} $  defined by
	$$
	{\vec T}_{3} :=  \begin{pmatrix}
	\cos \varphi~ & ~\sin \varphi~ & ~0  \\
	-\sin\varphi~ & ~\cos \varphi~ & ~0 \\
	0~ & ~0~ & ~1
	\end{pmatrix},
	$$
	for $d=2$,
	where $( \cos \varphi, \sin \varphi )$ is the polar coordinates representation of ${\bm \xi}_{kj_*} $,
	or
	$$
	{\vec T}_{3} :=  \begin{pmatrix}
	\sin \theta \cos \varphi~ & ~\sin \theta \sin \varphi~ & ~\cos \theta   \\
	-\sin\varphi~ & ~\cos \varphi~ & ~0 \\
	-\cos \theta \cos \varphi~ & ~-\cos \theta \sin \varphi~ & ~\sin \theta
	\end{pmatrix},
	$$
	for $d=3$, where $( \sin \theta \cos \varphi, \sin \theta \sin \varphi, \cos \theta  )$
	denotes the spherical coordinates representation of ${\bm \xi}_{kj_*}$.
	
	Consider the following special data
	$$
	\bar {\vec U}_{j_*}^{n} = {\bf T}^{-1} \tilde {\bf U}, \qquad \bar {\vec U}_{j}^{n} = {\bf T}^{-1}  \hat {\bf U},~\forall j \neq j_*,
	$$
	which are all admissible thanks to Corollary \ref{lem:RMHD:zhengjiao}.
	Substituting them into \eqref{eq:1stscheme} gives
	\begin{align*}
	\bar {\vec U}_{k}^{n+1} (\tau,\epsilon) &=
	{\bf T}^{-1} \hat {\bf U} - \theta \Big(
	\left \langle {\bm \xi}_{kj_*},
	{\bf F}_1 ( {\bf T}^{-1} \tilde {\bf U} ) - {\bf F} ( {\bf T}^{-1} \hat {\bf U} ) \right \rangle + {\bf T}^{-1} \hat{\bf U} - {\bf T}^{-1} \tilde {\bf U} \Big)
	\\
	& = {\bf T}^{-1} \Big( \hat {\bf U} - \theta \big( {\bf F}_1 ( \tilde {\bf U} ) - {\bf F}_1 ( \hat {\bf U} ) + \hat{\bf U} - \tilde {\bf U} \big) \Big)
	\\
	& = {\bf T}^{-1}
	\bigg( \frac{2\sqrt{3} \epsilon }{3},~\frac{2 \epsilon h(\tau,\epsilon)}{3}
	+ \frac{\theta}{2},~0,~0,~\theta,~0,~0,~\frac{4\epsilon h(\tau,\epsilon)}{3} - \tau + \frac{\theta}{2} \bigg)^\top,
	\end{align*}
	where $\theta := \frac{ \alpha \Delta t_n }{2 |{\mathcal I}_k|  }  |{\mathcal E}_{kj_*}|
	\in (0,\frac12) $ under the condition \eqref{eq:CFL:LF2D000}.
	By the assumption, one has $\bar {\vec U}_{k}^{n+1} (\tau,\epsilon) \in {\mathcal G}$ for all $\epsilon,\tau >0$.
	This yields $ {\bf T} \bar {\vec U}_{k}^{n+1} (\tau,\epsilon) \in {\mathcal G}$  by Corollary \ref{lem:RMHD:zhengjiao}.
	It then follows from Theorem \ref{theo:RMHD:CYconditionFINAL2} and {Remark \ref{rem:importantRemark}} that
	$$\tilde q( {\bf T} \bar {\vec U}_{k}^{n+1} (\tau,\epsilon) ) > 0,\quad \mbox{for all}~\epsilon,\tau >0.$$
	The continuity of $\tilde q({\bf U})$ with respect to ${\bf U}$ further implies
\begin{align*}
	0 & \le \mathop {\lim }\limits_{ \epsilon \to 0^+ }  \mathop {\lim }\limits_{ \tau \to 0^+ }
	\tilde q \left( {\bf T}  \bar {\vec U}_{k}^{n+1} (\tau,\epsilon) \right)
	\\
	&
	= \tilde q \left( \mathop {\lim }\limits_{ \epsilon \to 0^+ }  \mathop {\lim }\limits_{ \tau \to 0^+ } {\bf T}  \bar {\vec U}_{k}^{n+1} (\tau,\epsilon) \right)
	= \frac{ 27 \theta^7 (4 \theta + 1)^2 }{64} \times ( \theta -2 ) < 0,
\end{align*}
	which is a contradiction. Hence the assumption does not hold. The proof is completed.
\end{proof}

Let  $\bar {\vec U}_{k}^n =:(\bar  D_{k}^n, \bar { \vec m}_{k}^n, \bar  {\vec B}_{k}^n, \bar  E _{k}^n  )^\top$ and
$
\overline{\bf B}_{kj}^n  := \frac{1}{2} \Big( \bar{ \bf  B}_{k}^n + \bar{ \bf  B}_{j}^n \Big).
$
If the states $\{\bar {\bf U}_{k}^n\}$ are all admissible and
satisfy the following discrete divergence-free (DDF) condition
\begin{equation}\label{eq:DisDivB}
\mbox{\rm div} _{k} \bar {\bf B}^n :=
\sum\limits_{ j \in {\mathcal N}_k}
\Big\langle {\bm \xi}_{kj}, \overline{\bf B}_{kj}^n \Big\rangle
\big| {\mathcal E}_{kj} \big| = 0,
\end{equation}
then one can rigorously prove that the following conclusion
by using the generalized LF splitting property in Theorem \ref{theo:RMHD:LLFsplit2Dus}.

\begin{theorem} \label{theo:2DRMHD:LFscheme}
	If  $\bar {\vec U}_{k}^n \in {\mathcal G}$ and satisfies the DDF condition
	\eqref{eq:DisDivB}
	for all $k$, then under the CFL type condition \eqref{eq:CFL:LF2D000},
	the solution $ \bar {\vec U}_{k}^{n+1}$ of \eqref{eq:1stscheme}  belongs to ${\mathcal G}$ for all $k$.
\end{theorem}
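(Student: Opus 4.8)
The plan is to rewrite the update \eqref{eq:1stscheme}--\eqref{eq:LFflux} as a convex combination of $\bar{\vec U}_k^n$ and a single generalized Lax--Friedrichs average over the faces of the cell $\mathcal I_k$, and then to invoke the generalized LxF splitting property (Theorem \ref{theo:RMHD:LLFsplit2Dus}) together with the convexity statement in Corollary \ref{lam:new:convex}. The starting point is the elementary geometric identity for the non-self-intersecting polytope $\mathcal I_k$,
\[
\sum_{j\in\mathcal N_k}|\mathcal E_{kj}|\,{\bm \xi}_{kj}=\vec 0,
\]
which is just the divergence theorem applied to a constant vector field. Since $\langle{\bm \xi},\vec F(\vec U)\rangle$ is linear in ${\bm \xi}$, this identity forces $\sum_{j\in\mathcal N_k}|\mathcal E_{kj}|\,\langle{\bm \xi}_{kj},\vec F(\bar{\vec U}_k^n)\rangle=\vec 0$, so that after substituting \eqref{eq:LFflux} into \eqref{eq:1stscheme} the ``central'' contributions involving $\bar{\vec U}_k^n$ cancel and one obtains
\[
\bar{\vec U}_k^{n+1}=(1-\lambda_k)\,\bar{\vec U}_k^n+\lambda_k\,\bar{\vec U}_k^*,\qquad
\lambda_k:=\frac{\alpha\,\Delta t_n}{2|\mathcal I_k|}\sum_{j\in\mathcal N_k}|\mathcal E_{kj}|,
\]
with
\[
\bar{\vec U}_k^*:=\frac{1}{\sum_{j\in\mathcal N_k}|\mathcal E_{kj}|}\sum_{j\in\mathcal N_k}|\mathcal E_{kj}|\,\big(\bar{\vec U}_j^n-\alpha^{-1}\langle{\bm \xi}_{kj},\vec F(\bar{\vec U}_j^n)\rangle\big).
\]
The CFL condition \eqref{eq:CFL:LF2D000} is precisely the statement $\lambda_k\in(0,1)$.

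Next I would show $\bar{\vec U}_k^*\in\overline{\mathcal G}$ by applying Theorem \ref{theo:RMHD:LLFsplit2Dus} with the polytope taken to be $\mathcal I_k$ itself: its faces are indexed by $j\in\mathcal N_k$ (so $J=|\mathcal N_k|$), with $(d-1)$-measures $\mathcal L_j=|\mathcal E_{kj}|$ and outward unit normals ${\bm \xi}_j={\bm \xi}_{kj}$, and --- the scheme being first order --- a single quadrature node per face, i.e.\ $Q=1$, $\omega_1=1$, and $\vec U^{1j}=\bar{\vec U}_j^n\in\mathcal G$. It then remains to verify the discrete divergence-free hypothesis \eqref{eq:descrite2DDIVus} of that theorem, which here reduces to $\sum_{j\in\mathcal N_k}\langle{\bm \xi}_{kj},\bar{\vec B}_j^n\rangle\,|\mathcal E_{kj}|=0$; this is exactly the content of the assumed DDF condition \eqref{eq:DisDivB}, since expanding $\overline{\vec B}_{kj}^n=\frac12(\bar{\vec B}_k^n+\bar{\vec B}_j^n)$ in \eqref{eq:DisDivB} and invoking the normal identity $\sum_{j}|\mathcal E_{kj}|{\bm \xi}_{kj}=\vec 0$ once more annihilates the $\bar{\vec B}_k^n$ term and leaves precisely that equality. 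With $\alpha\ge c=1$, Theorem \ref{theo:RMHD:LLFsplit2Dus} then yields $\bar{\vec U}_k^*\in\overline{\mathcal G}$. Finally, since $\bar{\vec U}_k^n\in\mathcal G$, $\bar{\vec U}_k^*\in\overline{\mathcal G}$ and $1-\lambda_k\in(0,1]$, Corollary \ref{lam:new:convex} applied with $\lambda=1-\lambda_k$ gives $\bar{\vec U}_k^{n+1}=(1-\lambda_k)\bar{\vec U}_k^n+\lambda_k\bar{\vec U}_k^*\in\mathcal G$; as $k$ is arbitrary, this proves the theorem.

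I expect essentially no analytic difficulty to remain once Theorem \ref{theo:RMHD:LLFsplit2Dus} is in hand; the points requiring care are (i) the two uses of the polytope normal identity $\sum_j|\mathcal E_{kj}|{\bm \xi}_{kj}=\vec 0$ --- first to cancel the $\vec F(\bar{\vec U}_k^n)$ terms in the rewriting, and then to recast the face-averaged DDF condition \eqref{eq:DisDivB} into the node-based form \eqref{eq:descrite2DDIVus} demanded by the splitting theorem --- and (ii) matching the convex-combination structure exactly to the hypotheses of Corollary \ref{lam:new:convex}, namely that the ``$\mathcal G$-point'' $\bar{\vec U}_k^n$ carries the strictly positive weight $1-\lambda_k\in(0,1]$ while the ``$\overline{\mathcal G}$-point'' $\bar{\vec U}_k^*$ carries weight $\lambda_k$, both being secured by the CFL bound \eqref{eq:CFL:LF2D000}. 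These are routine but should be spelled out, and it is worth noting that the argument is completely insensitive to the EOS, all EOS-dependence having already been absorbed into Theorem \ref{theo:RMHD:LLFsplit2Dus} and Corollary \ref{lam:new:convex}.
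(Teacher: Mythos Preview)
Your proposal is correct and follows essentially the same approach as the paper: the paper too uses the polytope normal identity \eqref{eq:identity-WKL} to rewrite \eqref{eq:1stscheme} as the convex combination $\lambda_k{\bf \Xi}+(1-\lambda_k)\bar{\vec U}_k^n$ (your ${\bf \Xi}=\bar{\vec U}_k^*$), invokes the identity again to pass from \eqref{eq:DisDivB} to the outer form \eqref{eq:DisDivB2}, applies Theorem~\ref{theo:RMHD:LLFsplit2Dus} with $Q=1$ to get ${\bf \Xi}\in\overline{\mathcal G}$, and concludes by Corollary~\ref{lam:new:convex}. Your handling of the weights in Corollary~\ref{lam:new:convex} is also correct.
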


\begin{proof}
	Using the identity
	\begin{equation}\label{eq:identity-WKL}
	\sum_{ j \in {\mathcal N}_k }
	\left\langle  \bm  {\xi}_{kj} , {\bf Z} \right\rangle \big| {\mathcal E}_{kj} \big|
	=  \int_{{\mathcal I}_k}  \sum_{\ell=1}^d \frac{\partial  Z_\ell }{\partial x_\ell}     {\rm d} {\bf x}  \equiv 0,
	\end{equation}
	for any constant vector ${\bf Z}=(Z_1,Z_2,Z_3) \in \mathbb{R}^3$, one can rewrite the scheme \eqref{eq:1stscheme} as
	\begin{equation}\label{eq:LFrew}
	\bar { \bf  U }_k^{n+1}  = \lambda_k {\bf \Xi} +
	( 1 - \lambda_k ) \bar {\bf  U}_k^{n},
	\end{equation}
	where 	$\lambda_k :=  \frac{\alpha \Delta t_n }{2|{\mathcal I}_k|}  \sum_{ j \in {\mathcal N}_k }
	| {\mathcal E}_{kj} | \in(0,1) $ under the condition \eqref{eq:CFL:LF2D000}, and
	\begin{equation*}
	{\bf \Xi} := \frac{1}{ \sum_{ j \in {\mathcal N}_k }
		| {\mathcal E}_{kj} |  }
	\sum_{ j \in {\mathcal N}_k }
	\bigg(
	\bar{ \bf  U }_j^n - \alpha^{-1}
	\left\langle
	{\bm \xi}_{kj}, {\vec F} ( \bar{\vec U}_j^n ) \right\rangle
	\bigg) | {\mathcal E}_{kj} |.
	\end{equation*}
	It also follows from the identity \eqref{eq:identity-WKL} that
	\begin{equation*}
	\sum\limits_{ j \in {\mathcal N}_k}
	\left\langle \bm \xi_{kj},\bar{\bf B}_{k}^n \right\rangle \big| {\mathcal E}_{kj} \big| = 0,
	\end{equation*}
	which implies that the DDF condition \eqref{eq:DisDivB} is equivalent to
	\begin{equation}\label{eq:DisDivB2}
	{\mbox{\rm div}}_{k}^{\mbox{\tiny \rm out}} \bar {\bf B}^n :=  \sum\limits_{ j \in {\mathcal N}_k}
	\left\langle \bm \xi_{kj},\bar{\bf B}_{j}^n \right\rangle \big| {\mathcal E}_{kj} \big| = 0,
	\end{equation}	
	Thanks to Theorem \ref{theo:RMHD:LLFsplit2Dus}, one has ${\bf \Xi} \in \overline{\mathcal G}$
	under the condition \eqref{eq:DisDivB} or \eqref{eq:DisDivB2}.
	Therefore the form \eqref{eq:LFrew} is a convex combination of $\bf \Xi \in \overline{\mathcal G}$ and $\bar{\bf U}_k^n \in {\mathcal G}$.
	The proof is completed by Corollary \ref{lam:new:convex}.
\end{proof}

A remark is given on the proof of Theorem \ref{theo:2DRMHD:LFscheme} and the following theorems.
\begin{remark}
	Taking Theorem \ref{theo:2DRMHD:LFscheme} as an example,
	an alternative {(equivalent)}
	presentation of the proof is as follows.
	First   show	$\bar { D }_k^{n+1} > 0$ directly, and then
	prove  $\bar{\vec U}_k^{n+1} \cdot \vec n^* + p_{m}^*>0$ for any ${\vec B}^*$, $\vec v^*$ $\in \mathbb{R}^3$ with $|\vec v^*|<1$. The proof of the second part can be done by Lemma \ref{theo:RMHD:LLFsplit}, and is almost the same as the proof of
	corresponding generalized LxF splitting property.
	%
\end{remark}

If $d=2$ and the mesh $\mathcal M$ consists of uniform rectangles, then the DDF condition
\eqref{eq:DisDivB2} becomes that defined in Ref.\ \cite{WuTangM3AS2017}.
On a general mesh, the scheme \eqref{eq:1stscheme} does not always preserve the condition \eqref{eq:DisDivB} or \eqref{eq:DisDivB2}.
However, in some special cases such as $\mathcal M$ consisting of uniform rectangles ($d=2$)
or cuboids ($d=3$), one can show the following result, see Ref.\ \cite{WuTangM3AS2017} for the case of $d=2$.

\begin{prop} \label{theo:2DDivB:LFscheme}
	Assume that the mesh $\mathcal M$ is a uniform Cartesian mesh.
	For the LxF scheme \eqref{eq:1stscheme},
	the divergence error
	$  \max\limits_{k} \left| {\rm div}_{k} \bar {\bf B}^{n} \right|$  
	does not grow with $n$
	under the condition \eqref{eq:CFL:LF2D000}.
	Moreover, 
	$\{\bar {\bf U}_{k}^n\}$ 
	satisfy \eqref{eq:DisDivB}
	for all $k$ and $n \in \mathbb{N}$ if \eqref{eq:DisDivB} holds for the discrete initial data $\{\bar {\bf U}_{k}^0\}$.
\end{prop}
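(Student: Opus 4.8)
The plan is to derive an \emph{exact} closed evolution formula for the discrete divergence $\mathrm{div}_k\bar{\bf B}^{n}$ and then read the two assertions off from its structure. Throughout I work on the uniform Cartesian mesh with cell sizes $\Delta x_1,\dots,\Delta x_d$, so that the discrete difference operators appearing below have constant coefficients and commute. First I would isolate the magnetic-field block of the LxF scheme \eqref{eq:1stscheme}--\eqref{eq:LFflux}. Writing $G^{(\ell)}_i(\vec U):=\big(\vec F_i(\vec U)\big)_{B_\ell}=v_iB_\ell-B_iv_\ell$, this block carries the two algebraic identities $G^{(\ell)}_\ell\equiv0$ and $G^{(\ell)}_i=-G^{(i)}_\ell$. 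Summing the LxF fluxes over the $2d$ neighbours of a cell $k$, the diagonal physical fluxes drop out (because $G^{(\ell)}_\ell\equiv0$) and the LxF viscosity assembles into central second differences, yielding
\begin{equation*}
\bar B^{\,n+1}_{k,\ell}=\bar B^{\,n}_{k,\ell}+\sum_{m=1}^{d}\frac{\alpha\Delta t_n}{2\Delta x_m}\,\Delta_m\bar B^{\,n}_{k,\ell}-\sum_{m\ne\ell}\frac{\Delta t_n}{2\Delta x_m}\,\delta_m\,G^{(\ell)}_m(\bar{\vec U}^{\,n}_k),
\end{equation*}
where $\delta_m$ and $\Delta_m$ are the un-normalised central first- and second-difference operators in the $x_m$-direction.

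Next I would apply the discrete divergence. By the identity \eqref{eq:identity-WKL}, $\mathrm{div}_k\bar{\bf B}=\tfrac12\,\mathrm{div}^{\mathrm{out}}_k\bar{\bf B}$ with $\mathrm{div}^{\mathrm{out}}_k$ as in \eqref{eq:DisDivB2}, and on the uniform mesh $\mathrm{div}^{\mathrm{out}}_k\bar{\bf B}=|{\mathcal I}_k|\sum_{\ell}\Delta x_\ell^{-1}\,\delta_\ell\bar B_{\cdot,\ell}\big|_k$. Inserting the update formula, the physical-flux contribution to $\mathrm{div}^{\mathrm{out}}_k\bar{\bf B}^{n+1}$ is proportional to $\sum_{\ell}\sum_{m\ne\ell}(\Delta x_\ell\Delta x_m)^{-1}\,\delta_\ell\delta_m\,G^{(\ell)}_m$; pairing $(\ell,m)$ with $(m,\ell)$ and using $\delta_\ell\delta_m=\delta_m\delta_\ell$ together with $G^{(\ell)}_m=-G^{(m)}_\ell$, this sum \emph{vanishes identically}. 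The viscous part commutes through $\mathrm{div}^{\mathrm{out}}_k$, leaving the clean relation $\mathrm{div}^{\mathrm{out}}_k\bar{\bf B}^{n+1}=\big(\mathrm{Id}+\sum_{m}\tfrac{\alpha\Delta t_n}{2\Delta x_m}\Delta_m\big)\mathrm{div}^{\mathrm{out}}_k\bar{\bf B}^{n}$, i.e.
\begin{equation*}
\mathrm{div}^{\mathrm{out}}_k\bar{\bf B}^{n+1}=\Big(1-\sum_{m=1}^{d}\frac{\alpha\Delta t_n}{\Delta x_m}\Big)\mathrm{div}^{\mathrm{out}}_k\bar{\bf B}^{n}+\sum_{m=1}^{d}\frac{\alpha\Delta t_n}{2\Delta x_m}\Big(\mathrm{div}^{\mathrm{out}}_{k+e_m}\bar{\bf B}^{n}+\mathrm{div}^{\mathrm{out}}_{k-e_m}\bar{\bf B}^{n}\Big),
\end{equation*}
where $k\pm e_m$ are the two neighbours of $k$ in the $x_m$-direction.

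The $2d+1$ coefficients above are nonnegative and sum to one precisely because, on the uniform Cartesian mesh, the CFL condition \eqref{eq:CFL:LF2D000} is exactly $\alpha\Delta t_n\sum_{m}\Delta x_m^{-1}<1$ (note $\sum_{j\in\mathcal N_k}|{\mathcal E}_{kj}|=2|{\mathcal I}_k|\sum_m\Delta x_m^{-1}$). Hence $\mathrm{div}^{\mathrm{out}}_k\bar{\bf B}^{n+1}$ is a convex combination of the values $\{\mathrm{div}^{\mathrm{out}}_{k'}\bar{\bf B}^{n}\}$ over $k$ and its neighbours, so by the triangle inequality $|\mathrm{div}_k\bar{\bf B}^{n+1}|\le\max_{k'}|\mathrm{div}_{k'}\bar{\bf B}^{n}|$ for every $k$; taking the maximum over $k$ shows $\max_k|\mathrm{div}_k\bar{\bf B}^{n}|$ is non-increasing in $n$, which is the first claim. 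If moreover $\mathrm{div}_k\bar{\bf B}^{0}=0$ for all $k$, then $\max_k|\mathrm{div}_k\bar{\bf B}^{n}|=0$ for all $n$, i.e.\ \eqref{eq:DisDivB} persists (so that Theorem \ref{theo:2DRMHD:LFscheme} applies at every step).

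The main obstacle is the bookkeeping that establishes the physical-flux cancellation, i.e.\ that the discrete ``double divergence'' $\sum_{\ell,m}(\Delta x_\ell\Delta x_m)^{-1}\delta_\ell\delta_m G^{(\ell)}_m$ of the antisymmetric tensor vanishes. This is the discrete counterpart of $\sum_{i,\ell}\pt_i\pt_\ell(v_iB_\ell-B_iv_\ell)=0$, and it relies crucially on mesh uniformity, which makes the central-difference stencils symmetric; on a general mesh the normals $\bm\xi_{kj}$ and the face measures $|{\mathcal E}_{kj}|$ destroy this symmetry, the cancellation fails, and indeed Theorem \ref{theo:disprove} shows the conclusion itself is then false. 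The $d=3$ case (cuboids) is identical with three coordinate directions, and the $d=2$ case recovers the statement of Ref.\ \cite{WuTangM3AS2017}.
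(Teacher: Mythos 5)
Your proof is correct, and it is essentially the same argument as the paper's (which is only sketched here by deferral to the $d=2$ case in Ref.\ \cite{WuTangM3AS2017}): one shows that the antisymmetry $v_iB_\ell-B_iv_\ell=-(v_\ell B_i-B_\ell v_i)$ kills the physical-flux contribution to the discrete divergence, so that $\mathrm{div}^{\mathrm{out}}_k\bar{\bf B}^{n}$ itself evolves by a scalar scheme whose coefficients form a convex combination under \eqref{eq:CFL:LF2D000}, giving the discrete maximum principle and the persistence of \eqref{eq:DisDivB}. The details you supply (the commuting central-difference operators, the identity $\mathrm{div}_k\bar{\bf B}=\tfrac12\,\mathrm{div}^{\mathrm{out}}_k\bar{\bf B}$, and the verification that the CFL condition reduces to $\alpha\Delta t_n\sum_m\Delta x_m^{-1}<1$ on the uniform Cartesian mesh) are all accurate.
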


\subsubsection{High-order accurate  schemes}

We then discuss the PCP high-order accurate schemes for the {$d$-dimensional} RMHD equations \eqref{eq:RMHD1D}. For the sake of convenience, the analysis will be focused on the time discretization using the forward Euler method,
and also work for the high-order accurate SSP ({strong stability preserving}) time discretization (cf. \cite{Gottlieb2009}).

Assume that  the approximate solution $\vec U_{k}^n ({\bf x})$ at time $t=t_n$ within the cell ${\mathcal I}_{k}$ is either reconstructed in the finite volume methods
from the cell average values $\{\bar{\vec U}_{k}^n\}$ or evolved in the discontinuous Galerkin (DG)  methods. The function $\vec U_{k}^n ({\bf x})$ is a vector of the polynomial of degree $ K$,  and its cell average value over the cell ${\mathcal I}_{k}$ equals $\bar {\vec U}_{k}^{n}$.

For the  RMHD equations \eqref{eq:RMHD1D},
the finite volume scheme or discrete equation for the cell average value in the DG method on
the mesh $\mathcal M$ may be written as
\begin{align}
\bar{\vec U}_{k}^{n+1}
= \bar{\vec U}_{k}^{n} - \frac{\Delta t_n}{|{\mathcal I}_k|}   \sum_{j \in {\mathcal N}_k}
|{\mathcal E}_{kj}| \hat{\vec F}_{kj},
\label{eq:2DRMHD:cellaverage}
\end{align}
where
\begin{equation}\label{eq:quadfulx}
\begin{aligned}
\hat{\vec F}_{kj} & = \sum_{ \mu =1 }^Q \omega_\mu
\hat{\vec F} (  {\vec U}_k^n ({\bf x}_{kj}^{(\mu)}) , {\vec U}_j^n ({\bf x}_{kj}^{(\mu)}); {\bm \xi}_{kj} )
\\
&\approx \frac{1}{|{\mathcal E}_{kj}|} \int_{{\mathcal E}_{kj}} \hat{\vec F} (  {\vec U}_k^n ({\bf x}) , {\vec U}_j^n ({\bf x}); {\bm \xi}_{kj} ) ds,
\end{aligned}
\end{equation}
and the numerical flux $\hat{\bf F}$ is taken
as the LxF flux defined in \eqref{eq:LFflux}.
Here
$\{ {\bf x}_{kj}^{(\mu)} \}$ are the $Q$-point
quadrature nodes on ${\mathcal E}_{kj}$, and $\{\omega_\mu\}$ are the associated weights satisfying $\sum_{\mu=1}^Q \omega_\mu = 1$.
In practice, the quadrature rule should meet certain accuracy requirement,
and is usually exact for polynomials of degree up to $K$.

Let ${\vec U}_{k}^n({\bf x})=:\big( D_{k}^n({\bf x}), \vec m_{k}^n({\bf x}),\vec B_{k}^n({\bf x}), E_{k}^n({\bf x})\big)^{\top}$ and
$$
\overline{\bf B}_{kj}^{n,(\mu)}  := \frac{1}{2} \Big(
{\bf B}_k^n ({\bf x}_{kj}^{(\mu)})+ {\bf B}_j^n ({\bf x}_{kj}^{(\mu)}) \Big),$$
and define the discrete divergences of $\{ {\bf B}_k^n({\bf x}) \}$ by
\begin{equation}\label{eq:DisDivB:cst}
\mbox{\rm div} _{k}  {\bf B}^n :=
\sum\limits_{ j \in {\mathcal N}_k}  \left[
\sum\limits_{ \mu=1}^Q \omega_\mu
\left\langle \bm \xi_{kj}, \overline{\bf B}_{kj}^{n,(\mu)} \right\rangle
\right]\big| {\mathcal E}_{kj} \big| ,
\end{equation}
which is an approximation to the left-hand side of
$$
\sum\limits_{ j \in {\mathcal N}_k} \int_{{\mathcal E}_{kj}}
\big\langle \bm \xi_{kj}, {\bf B} ({\bf x}) \big\rangle {\rm d} s
= \int_{{\mathcal I}_{k}}  \sum_{ \ell =1 }^d\frac{\partial B_\ell } {\partial x_\ell}    {\rm d} {\bf x} = 0.
$$
It is noticed that
\begin{equation}\label{eq:2eq1}
\mbox{\rm div} _{k} {\bf B}^n = \frac12 \left( \mbox{\rm div} _{k}^{\mbox{\tiny \rm in}} {\vec B}^n
+ \mbox{\rm div} _{k}^{\mbox{\tiny \rm out}} {\vec B}^n \right),
\end{equation}
where
\begin{align}
&
\mbox{\rm div} _{k}^{\mbox{\tiny \rm in}} {\vec B}^n :=
\sum\limits_{ j \in {\mathcal N}_k}  \left[
\sum\limits_{ \mu=1}^Q \omega_\mu
\left\langle \bm \xi_{kj}, {\bf B}_k^n ({\bf x}_{kj}^{(\mu)}) \right\rangle
\right]\big| {\mathcal E}_{kj} \big|,
\\ \label{eq:divoutH}
&
\mbox{\rm div} _{k}^{\mbox{\tiny \rm out}} {\vec B}^n :=
\sum\limits_{ j \in {\mathcal N}_k}  \left[
\sum\limits_{ \mu=1}^Q \omega_\mu
\left\langle \bm \xi_{kj}, {\bf B}_j^n ({\bf x}_{kj}^{(\mu)}) \right\rangle
\right]\big| {\mathcal E}_{kj} \big|.
\end{align}
The quantity ${\rm div}^{\rm out}_{k} {\bf B}^{n}$ defined in \eqref{eq:divoutH}
is consistent with the one given in \eqref{eq:DisDivB2} for the first-order scheme.  When $K=0$, the ``high-order'' version in \eqref{eq:divoutH}  reduces to the ``first-order'' version in \eqref{eq:DisDivB2}. In the later text, ${\rm div}^{\rm out}_{k} {\bf B}^{n}$ will be referred to as the general definition in \eqref{eq:divoutH}.

Similar to Theorem 3.5 in Ref.\ \cite{WuTangM3AS2017},
we have the following sufficient conditions for that
the high-order accurate scheme \eqref{eq:2DRMHD:cellaverage} is PCP on the mesh $\mathcal M$.

\begin{theorem} \label{thm:PCP:2DRMHD}
	If the polynomial vectors $\{{\vec U}_{k}^n({\bf x})\}$ satisfy
	\begin{gather}\label{eq:DivB:cst}
	\mbox{\rm div} _{k}  {\bf B}^n=0, \quad \forall k,
	\\
	\label{eq:AdmiCon}
	{\vec U}_k^n ({\bf x}_{kj}^{(\mu)}) \in {\mathcal G}, \quad \forall \mu \in \{1,\cdots,Q\},~\forall j \in {\mathcal N}_k,~\forall k,
	\end{gather}
	and  there exist a constant $ \beta_k \in \left(0,\frac12\right)$ for all $k$ such that
	\begin{equation}\label{eq:AdmiCon2}
	{\bf W}_k^n:=\frac{1}{1-2\beta_k}\left[\bar{\bf U}_k^n - \frac{ 2\beta_k }{ \sum_{j \in {\mathcal N}_k} |{\mathcal E}_{kj}| }
	\sum_{j \in {\mathcal N}_k} |{\mathcal E}_{kj}| \bigg( \sum_{ \mu =1 }^Q \omega_\mu {\bf U}_k^n ({\bf x}_{kj}^{(\mu)}) \bigg) \right] \in {\mathcal G},
	\end{equation}
	then under the CFL type condition
	\begin{equation}\label{eq:CFL:2DRMHD}
	0<    \frac{ \alpha \Delta t_n }{2 |{\mathcal I}_k|  } \sum_{j \in {\mathcal N}_k} |{\mathcal E}_{kj}|   < \beta_k,
	\end{equation}
	the solution  $\bar{\vec U}_k^{n+1}$ of the scheme \eqref{eq:2DRMHD:cellaverage} belongs to  ${\mathcal G}$.
\end{theorem}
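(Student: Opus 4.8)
The plan is to mimic the proof of Theorem~3.5 in Ref.\ \cite{WuTangM3AS2017}: rewrite the updated cell average $\bar{\vec U}_k^{n+1}$ as a convex combination of the admissible auxiliary state $\vec W_k^n$ supplied by \eqref{eq:AdmiCon2} and a second state $\bar{\vec U}_*$ that I will place in $\overline{\mathcal G}$, and then conclude with Corollary~\ref{lam:new:convex}. Set $\lambda_k:=\frac{\alpha\Delta t_n}{2|{\mathcal I}_k|}\sum_{j\in{\mathcal N}_k}|{\mathcal E}_{kj}|$, so that $0<\lambda_k<\beta_k<\frac12$ under \eqref{eq:CFL:2DRMHD}; write $S_k:=\sum_{j\in{\mathcal N}_k}|{\mathcal E}_{kj}|$ and abbreviate $\vec U_k^{(\mu,j)}:=\vec U_k^n({\bf x}_{kj}^{(\mu)})$, $\vec U_j^{(\mu,j)}:=\vec U_j^n({\bf x}_{kj}^{(\mu)})$.

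First I would substitute the LxF flux \eqref{eq:LFflux}--\eqref{eq:quadfulx} into \eqref{eq:2DRMHD:cellaverage} and use \eqref{eq:AdmiCon2} to replace $\bar{\vec U}_k^n$ by $(1-2\beta_k)\vec W_k^n+\frac{2\beta_k}{S_k}\sum_{j,\mu}|{\mathcal E}_{kj}|\omega_\mu\vec U_k^{(\mu,j)}$. Collecting terms, the contribution of the $\mu$-th node on the $j$-th face is built from $(2\beta_k-\lambda_k)\vec U_k^{(\mu,j)}-\lambda_k\alpha^{-1}\langle{\bm \xi}_{kj},\vec F(\vec U_k^{(\mu,j)})\rangle$ plus $\lambda_k\big(\vec U_j^{(\mu,j)}-\alpha^{-1}\langle{\bm \xi}_{kj},\vec F(\vec U_j^{(\mu,j)})\rangle\big)$. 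The key algebraic step is the splitting $(2\beta_k-\lambda_k)\vec U_k^{(\mu,j)}=(2\beta_k-2\lambda_k)\vec U_k^{(\mu,j)}+\lambda_k\vec U_k^{(\mu,j)}$, which makes the interior trace appear both as a bare state (with nonnegative weight $2\beta_k-2\lambda_k$, since $\lambda_k<\beta_k$) and in exactly the generalized LxF-splitting form $\vec U_k^{(\mu,j)}-\alpha^{-1}\langle{\bm \xi}_{kj},\vec F(\vec U_k^{(\mu,j)})\rangle$, with the \emph{outward} normal ${\bm \xi}_{kj}$, that the exterior trace has. A weight count using $\sum_\mu\omega_\mu=1$ and $\sum_{j}|{\mathcal E}_{kj}|=S_k$ then gives $\bar{\vec U}_k^{n+1}=(1-2\beta_k)\vec W_k^n+2\beta_k\bar{\vec U}_*$, where $\bar{\vec U}_*$ is the convex combination (normalized by $2\beta_k S_k$, with the three families weighted by $|{\mathcal E}_{kj}|\omega_\mu$ times $2\beta_k-2\lambda_k$, $\lambda_k$, $\lambda_k$ respectively) of the bare traces $\vec U_k^{(\mu,j)}$, the interior splitting terms $\vec U_k^{(\mu,j)}-\alpha^{-1}\langle{\bm \xi}_{kj},\vec F(\vec U_k^{(\mu,j)})\rangle$, and the exterior splitting terms $\vec U_j^{(\mu,j)}-\alpha^{-1}\langle{\bm \xi}_{kj},\vec F(\vec U_j^{(\mu,j)})\rangle$.

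Then I would check $\bar{\vec U}_*\in\overline{\mathcal G}$, i.e., positivity of its density component and $\bar{\vec U}_*\cdot\vec n^*+p_m^*\ge0$ for all $\vec B^*,\vec v^*\in{\mathbb R}^3$ with $|\vec v^*|<1$. The density component is positive because $D^{(\mu,j)}>0$ by \eqref{eq:AdmiCon} and, for a splitting term, $D^{(\mu,j)}\big(1-\alpha^{-1}\langle{\bm \xi}_{kj},\vec v^{(\mu,j)}\rangle\big)\ge D^{(\mu,j)}\big(1-\alpha^{-1}|\vec v^{(\mu,j)}|\big)>0$ since $|{\bm \xi}_{kj}|=1$, $|\vec v^{(\mu,j)}|<1$ and $\alpha\ge1$. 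For the linear constraint, each bare term contributes $\vec U^{(\mu,j)}\cdot\vec n^*+p_m^*>0$ by Theorem~\ref{theo:RMHD:CYcondition:VecN}, and each splitting term is controlled by the rotational-invariance estimate \eqref{eq:wkl000} from the proof of Theorem~\ref{theo:RMHD:LLFsplit2Dus} (i.e. Lemma~\ref{theo:RMHD:LLFsplit} transported to the normal ${\bm \xi}_{kj}$), which gives $\big(\vec U^{(\mu,j)}-\alpha^{-1}\langle{\bm \xi}_{kj},\vec F(\vec U^{(\mu,j)})\rangle\big)\cdot\vec n^*+p_m^*\ge\alpha^{-1}\big(\langle{\bm \xi}_{kj},\vec v^*\rangle\,p_m^*-\langle{\bm \xi}_{kj},\vec B^{(\mu,j)}\rangle(\vec v^*\cdot\vec B^*)\big)$ for both the interior and exterior traces. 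Summing these lower bounds with the weights above, the $p_m^*$-part collapses to $p_m^*\sum_j|{\mathcal E}_{kj}|\langle{\bm \xi}_{kj},\vec v^*\rangle=0$ by the constant-vector identity \eqref{eq:identity-WKL}, while the $(\vec v^*\cdot\vec B^*)$-part — because the interior and exterior magnetic traces enter with the same weight $\lambda_k$ — reassembles into a multiple of $\sum_j\big[\sum_\mu\omega_\mu\langle{\bm \xi}_{kj},\overline{\vec B}_{kj}^{n,(\mu)}\rangle\big]|{\mathcal E}_{kj}|=\mbox{\rm div}_k\vec B^n$, which vanishes by hypothesis \eqref{eq:DivB:cst}. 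Hence $\bar{\vec U}_*\cdot\vec n^*+p_m^*\ge0$, so $\bar{\vec U}_*\in\overline{\mathcal G}$; since $\vec W_k^n\in{\mathcal G}$ and $1-2\beta_k\in(0,1)$, Corollary~\ref{lam:new:convex} yields $\bar{\vec U}_k^{n+1}\in{\mathcal G}$.

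I expect the main obstacle to be engineering the convex decomposition, not any single estimate. Three things must mesh: the auxiliary state in \eqref{eq:AdmiCon2} is precisely what is needed to absorb the bare quadrature traces released from $\bar{\vec U}_k^n$; the CFL bound $\lambda_k<\beta_k$ (rather than merely $\lambda_k<1$, which already suffices for the first-order scheme) is exactly what keeps every coefficient of $\bar{\vec U}_*$ nonnegative after the rearrangement $2\beta_k-\lambda_k=(2\beta_k-2\lambda_k)+\lambda_k$; and the \emph{symmetrized} discrete divergence \eqref{eq:DisDivB:cst} — which through \eqref{eq:2eq1} averages the ``in'' and ``out'' versions, and whose ``in'' part no longer vanishes identically when the $\vec B_k^n({\bf x})$ are genuine polynomials — is the unique combination that cancels the magnetic cross-terms once the interior and exterior traces carry the same weight $\lambda_k$ in the splitting. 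The one remaining technicality, already dealt with in the proof of Theorem~\ref{theo:RMHD:LLFsplit2Dus}, is the use of the rotational invariance of \eqref{eq:RMHD1D} to reduce each general-normal splitting term to the axis-aligned inequality of Lemma~\ref{theo:RMHD:LLFsplit}.
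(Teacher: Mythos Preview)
Your proof is correct and follows essentially the same route as the paper: the paper writes $\bar{\vec U}_k^{n+1}=(1-2\beta_k){\bf W}_k^n+2(\beta_k-\lambda_k){\bf \Xi}_1+2\lambda_k{\bf \Xi}_2$ with ${\bf \Xi}_1$ the convex average of the interior traces and ${\bf \Xi}_2$ the combined interior/exterior LxF-splitting term, then invokes Theorem~\ref{theo:RMHD:LLFsplit2Dus} to get ${\bf \Xi}_2\in\overline{\mathcal G}$ under \eqref{eq:DivB:cst}. Your $\bar{\vec U}_*$ is precisely $\frac{\beta_k-\lambda_k}{\beta_k}{\bf \Xi}_1+\frac{\lambda_k}{\beta_k}{\bf \Xi}_2$, and instead of citing Theorem~\ref{theo:RMHD:LLFsplit2Dus} as a black box you reproduce its proof inline via \eqref{eq:wkl000}; the underlying decomposition and estimates are identical.
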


\begin{proof}
	Let $\lambda_k :=  \frac{\alpha\Delta t_n }{2 |{\mathcal I}_k|  } \sum_{j \in {\mathcal N}_k} |{\mathcal E}_{kj}|  \in (0,\beta_k)$. If substituting  \eqref{eq:LFflux} into \eqref{eq:2DRMHD:cellaverage},
	then by technical arrangements one obtains the decomposition
	\begin{align}  \label{eq:2DRMHD:split:proof}
	\bar{\vec U}_{k}^{n+1}
	& = (1-2\beta_k) {\bf W}_k^n + 2 (\beta_k-\lambda_k) {\bf \Xi}_1 +  2 \lambda_k {\bf \Xi}_2,
	\end{align}
	with
	\begin{align*}
	{\bf \Xi}_1
	& := \frac{ 1 }{ \sum_{j \in {\mathcal N}_k} |{\mathcal E}_{kj}| }
	\sum_{j \in {\mathcal N}_k} |{\mathcal E}_{kj}| \bigg( \sum_{ \mu =1 }^Q \omega_\mu {\bf U}_k^n ({\bf x}_{kj}^{(\mu)}) \bigg),
	\\ \nonumber
	{\bf \Xi}_2
	& :=  \frac{ 1 }{ 2\sum_{j \in {\mathcal N}_k} |{\mathcal E}_{kj}| }
	\left\{
	\sum_{j \in {\mathcal N}_k} |{\mathcal E}_{kj}|
	\Bigg[
	\sum\limits_{\mu=1}^Q {{\omega _\mu}}
	\bigg( {\bf U}_k^n ( {\bf x}_{kj}^{(\mu)} ) -
	\alpha^{-1}	 \left\langle
	{\bm \xi}_{kj}, {\vec F} ( {\bf U}_k^n ( {\bf x}_{kj}^{(\mu)} ) )  \right\rangle
	\bigg)
	\Bigg] \right.
	\\
	& \qquad \left.+ \sum_{j \in {\mathcal N}_k} |{\mathcal E}_{kj}|
	\Bigg[
	\sum\limits_{\mu=1}^Q {{\omega _\mu}}
	\bigg( {\bf U}_j^n ( {\bf x}_{kj}^{(\mu)} ) -
	\alpha^{-1}\left\langle
	{\bm \xi}_{kj}, {\vec F} ( {\bf U}_j^n ( {\bf x}_{kj}^{(\mu)} ) )  \right\rangle
	\bigg)
	\Bigg] \right\}.
	\end{align*}
	Using the convexity of $\mathcal G$, one has ${\bf \Xi}_1 \in {\mathcal G}$ under the condition \eqref{eq:AdmiCon}. To show ${\bf \Xi}_2\in \overline{\mathcal G}$ by using Theorem \ref{theo:RMHD:LLFsplit2Dus},
	we investigate the corresponding DDF condition required in Theorem \ref{theo:RMHD:LLFsplit2Dus}.
	The required DDF condition is found to be
	\begin{align*}
	\mbox{\rm div} _{k}^{\mbox{\tiny \rm in}} {\vec B}^n
	+ \mbox{\rm div} _{k}^{\mbox{\tiny \rm out}} {\vec B}^n=0,
	\end{align*}
	which is equivalent to the condition \eqref{eq:DivB:cst}. Therefore,  under the conditions  \eqref{eq:DivB:cst}
	and \eqref{eq:AdmiCon}, Theorem \ref{theo:RMHD:LLFsplit2Dus} implies ${\bf \Xi}_2\in \overline{\mathcal G}$.
	Using \eqref{eq:2DRMHD:split:proof} and Corollary \ref{lam:new:convex} yields $ \bar{\vec U}_{k}^{n+1} \in {\mathcal G}$, and completes the proof.
\end{proof}

\begin{remark}
	It should be mentioned that Theorem \ref{thm:PCP:2DRMHD} also holds for $\beta_k=\frac12$, if
	the condition \eqref{eq:AdmiCon2} is replaced with
	\begin{equation}\label{eq:AdmiCon3}
	\bar{\bf U}_k^n - \frac{ 1 }{ \sum_{j \in {\mathcal N}_k} |{\mathcal E}_{kj}| }
	\sum_{j \in {\mathcal N}_k} |{\mathcal E}_{kj}| \bigg( \sum_{ \mu =1 }^Q \omega_\mu {\bf U}_k^n ({\bf x}_{kj}^{(\mu)}) \bigg) =0.
	\end{equation}
	In this case, the first term at the right-hand side of \eqref{eq:2DRMHD:split:proof}
	vanishes. This is similar in the following conclusion.
\end{remark}

\begin{remark}
	Theorem \ref{thm:PCP:2DRMHD} provides
	several sufficient conditions \eqref{eq:DivB:cst}--\eqref{eq:AdmiCon2} on the
	function $\vec U_{k}^n({\bf x})$ reconstructed
	in the finite volume method or evolved in the DG method
	in order to ensure that the  numerical schemes \eqref{eq:2DRMHD:cellaverage} is PCP.
	As we will discuss later, the conditions \eqref{eq:AdmiCon}--\eqref{eq:AdmiCon2} can be met by using the PCP
	limiter similar to that in Ref.\ \cite{WuTangM3AS2017}.
	The DDF condition \eqref{eq:DivB:cst} is milder than those in Ref.\ \cite{WuTangM3AS2017}, where two DDF
	conditions $\mbox{\rm div} _{k}^{\mbox{\tiny \rm in}} {\vec B}^n=0$ and $\mbox{\rm div} _{k}^{\mbox{\tiny \rm out}} {\vec B}^n=0 $ were needed.
	Using the divergence theorem, one can obtain $\mbox{\rm div} _{k}^{\mbox{\tiny \rm in}} {\vec B}^n=0$ if the polynomial vector ${\bf B}_{k}^n({\bf x})$ is locally divergence-free (cf. \cite{Li2005}). Such locally divergence-free property
	is not destroyed in the PCP limiting procedure
	since the PCP limiter
	modifies the vectors $\vec U_{k}^n({\bf x})$ with only a simple scaling.
	However, it is not easy to meet the DDF condition \eqref{eq:DivB:cst} or $\mbox{\rm div} _{k}^{\mbox{\tiny \rm out}} {\vec B}^n=0 $,  because they
	{depend} on the limiting values of  the magnetic field calculated from the neighboring cells ${\mathcal I}_j,~j\in {\mathcal N}_k$.
	If the polynomial vectors $\{{\bf B}_{k}^n({\bf x})\}$ are globally divergence-free,
	in other words, it is locally divergence-free in ${\mathcal I}_k$ with
	normal magnetic component continuous across the cell interface,
	then \eqref{eq:DivB:cst} is satisfied.
	But the PCP limiter with local scaling may destroy the globally divergence free property of ${\bf B}_{k}^n ({\bf x})$. Hence, it is nontrivial and still open to design a limiting procedure for the polynomial vector $\vec U_{k}^n({\bf x})$ for satisfying all the sufficient conditions \eqref{eq:DivB:cst}--\eqref{eq:AdmiCon2} at the same time.
	Fortunately, if the numerical magnetic field ${\bf B}_{k}^n({\bf x})$ is locally divergence-free and  converges to
	the exact solution within each cell, then refining the mesh may weaken the impact of violating \eqref{eq:DivB:cst} on the PCP property, see the numerical evidences in Ref.\ \cite{WuTangM3AS2017} on 2D uniform rectangular meshes and the interpretation in the following proposition.
\end{remark}

\begin{prop}
	Assume that ${\bf B}_{k}^n({\bf x})$ is locally divergence-free and approximates the exact solution ${\bf B}({\bf x},t_n)$ with at least first order accuracy within each cell ${\mathcal I}_{k}$,
	and the polynomial vectors $\{{\vec U}_{k}^n({\bf x})\}$ satisfy \eqref{eq:AdmiCon}
	and \eqref{eq:AdmiCon2} for a constant $\beta_k \in (0,\frac12)$.
	Then under the CFL type condition \eqref{eq:CFL:2DRMHD}, the solution  $\bar{\vec U}_k^{n+1}$ of the scheme \eqref{eq:2DRMHD:cellaverage} satisfies that $\bar D_k^{n+1}>0$, and for
	any ${\vec B}^*,\vec v^* \in \mathbb{R}^3$ with $|\vec v^*|<1$, it holds
	$$
	\bar{\vec U}_k^{n+1} \cdot \vec n^* + p_{m}^* >
	-\frac{ 2 \lambda_k {\vec v}^* \cdot  {\vec B}^*  }{\alpha{\sum\limits_{ j \in {\mathcal N}_k} | {\mathcal E}_{kj} | }} \mbox{\rm div} _{k}^{\mbox{\tiny \rm out}} {\vec B}^n
	= -{\mathcal O} (\Delta),
	$$
	where $\Delta$ is the biggest radius of the circumscribed $d$-spheres of the cells $\{{\mathcal I}_k\}$.
	This interprets that, as $\Delta$ approaches zero, $\bar{\bf U}_{k}^{n+1}$ may become more probably in ${\mathcal G}$.
\end{prop}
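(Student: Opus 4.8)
The plan is to rerun the proof of Theorem~\ref{thm:PCP:2DRMHD} unchanged up to the step where the discrete divergence-free condition~\eqref{eq:DivB:cst} is invoked, and then replace that step by a quantitative estimate. Substituting~\eqref{eq:LFflux} into~\eqref{eq:2DRMHD:cellaverage} still produces the decomposition~\eqref{eq:2DRMHD:split:proof}, i.e. $\bar{\vec U}_{k}^{n+1} = (1-2\beta_k){\bf W}_k^n + 2(\beta_k-\lambda_k){\bf \Xi}_1 + 2\lambda_k{\bf \Xi}_2$ with $\lambda_k := \frac{\alpha\Delta t_n}{2|{\mathcal I}_k|}\sum_{j\in{\mathcal N}_k}|{\mathcal E}_{kj}|$ and ${\bf W}_k^n$, ${\bf \Xi}_1$, ${\bf \Xi}_2$ exactly as there, since that identity uses only the algebra of the scheme and the definition~\eqref{eq:AdmiCon2}. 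Under~\eqref{eq:CFL:2DRMHD} one has $\lambda_k\in(0,\beta_k)$, so the three coefficients are strictly positive (using $\beta_k<\frac12$) and sum to one; moreover ${\bf W}_k^n\in{\mathcal G}$ by~\eqref{eq:AdmiCon2}, and ${\bf \Xi}_1\in{\mathcal G}$ by~\eqref{eq:AdmiCon} and the convexity of ${\mathcal G}$ (Theorem~\ref{theo:RMHD:convex}). The only point in the proof of Theorem~\ref{thm:PCP:2DRMHD} at which the full condition~\eqref{eq:DivB:cst} entered was the claim ${\bf \Xi}_2\in\overline{\mathcal G}$, and it is this claim that must now be weakened.

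For $\bar D_k^{n+1}>0$ I would inspect the $D$-components: $D({\bf W}_k^n)>0$ and $D({\bf \Xi}_1)>0$ from the preceding paragraph, while $D({\bf \Xi}_2)$ is a positive combination of nodal quantities of the form $D\big(1-\alpha^{-1}\langle{\bm \xi}_{kj},{\bf v}\rangle\big)$ with $D>0$ and $|{\bf v}|<1\le\alpha$ at the quadrature nodes (by~\eqref{eq:AdmiCon}), hence positive; so $\bar D_k^{n+1}$ is a convex combination of positive numbers.

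For the second assertion I would take the inner product of~\eqref{eq:2DRMHD:split:proof} with $\vec n^*$ and add $p_{m}^*$. The contributions $(1-2\beta_k)\big({\bf W}_k^n\cdot\vec n^*+p_{m}^*\big)$ and $2(\beta_k-\lambda_k)\big({\bf \Xi}_1\cdot\vec n^*+p_{m}^*\big)$ are strictly positive because ${\bf W}_k^n,{\bf \Xi}_1\in{\mathcal G}={\mathcal G}_1$. For the term ${\bf \Xi}_2\cdot\vec n^*+p_{m}^*$ I would repeat the computation in the proof of Theorem~\ref{theo:RMHD:LLFsplit2Dus}: split ${\bf \Xi}_2$ into its ``$k$-part'' (built from ${\bf U}_k^n({\bf x}_{kj}^{(\mu)})$) and its ``$j$-part'' (built from ${\bf U}_j^n({\bf x}_{kj}^{(\mu)})$), apply inequality~\eqref{eq:wkl000} to each of these admissible point values with normal ${\bm \xi}_{kj}$, and sum over $j$ and $\mu$. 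The $p_{m}^*{\vec v}^*$ leftovers cancel because $\sum_{j\in{\mathcal N}_k}|{\mathcal E}_{kj}|{\bm \xi}_{kj}={\bf 0}$ (a special case of~\eqref{eq:identity-WKL}); the ``$k$-part'' leaves $-\alpha^{-1}({\vec v}^*\cdot{\vec B}^*)\,\mbox{\rm div}_{k}^{\mbox{\tiny \rm in}}{\bf B}^n$, which vanishes since ${\bf B}_k^n({\bf x})$ is locally divergence-free and the quadrature is exact for polynomials of degree $K$, so the face sums collapse to $\int_{{\mathcal I}_k}\sum_{\ell=1}^d\partial B_\ell/\partial x_\ell\,{\rm d}{\bf x}=0$ by the divergence theorem; and the ``$j$-part'' leaves exactly $-\alpha^{-1}({\vec v}^*\cdot{\vec B}^*)\,\mbox{\rm div}_{k}^{\mbox{\tiny \rm out}}{\bf B}^n$. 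Collecting, $2\lambda_k\big({\bf \Xi}_2\cdot\vec n^*+p_{m}^*\big)$ is a fixed positive multiple of $-\lambda_k({\vec v}^*\cdot{\vec B}^*)\,\mbox{\rm div}_{k}^{\mbox{\tiny \rm out}}{\bf B}^n\big/\big(\alpha\sum_{j\in{\mathcal N}_k}|{\mathcal E}_{kj}|\big)$, which is the displayed right-hand side, and the strict positivity of the first two contributions yields the strict inequality.

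Finally, to recognise this bound as $-{\mathcal O}(\Delta)$, I would use local divergence-freeness once more: $\mbox{\rm div}_{k}^{\mbox{\tiny \rm out}}{\bf B}^n=\mbox{\rm div}_{k}^{\mbox{\tiny \rm out}}{\bf B}^n-\mbox{\rm div}_{k}^{\mbox{\tiny \rm in}}{\bf B}^n=\sum_{j\in{\mathcal N}_k}|{\mathcal E}_{kj}|\sum_{\mu=1}^Q\omega_\mu\big\langle{\bm \xi}_{kj},{\bf B}_j^n({\bf x}_{kj}^{(\mu)})-{\bf B}_k^n({\bf x}_{kj}^{(\mu)})\big\rangle$; since both polynomial vectors approximate the same ${\bf B}(\cdot,t_n)$ to at least first order, their difference at the shared node ${\bf x}_{kj}^{(\mu)}$ is ${\mathcal O}(\Delta)$, so $\big|\mbox{\rm div}_{k}^{\mbox{\tiny \rm out}}{\bf B}^n\big|\le C\Delta\sum_{j\in{\mathcal N}_k}|{\mathcal E}_{kj}|$; dividing by $\sum_{j\in{\mathcal N}_k}|{\mathcal E}_{kj}|$ and using $\lambda_k\in(0,\frac12)$ and the boundedness of $|{\vec v}^*\cdot{\vec B}^*|$ gives the claim. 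The delicate step is the third paragraph: one must pinpoint which residual survives once~\eqref{eq:DivB:cst} is dropped and observe that only $\mbox{\rm div}_{k}^{\mbox{\tiny \rm in}}{\bf B}^n=0$ (not the full discrete divergence-free condition) is available here, which is exactly enough to annihilate the ``$k$-part'' and leaves $\mbox{\rm div}_{k}^{\mbox{\tiny \rm out}}{\bf B}^n$ as the only source of error; everything else is a rerun of the estimates already established for Theorems~\ref{theo:RMHD:LLFsplit2Dus}--\ref{thm:PCP:2DRMHD}.
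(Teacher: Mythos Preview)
Your proposal is correct and follows essentially the same route as the paper: both arguments reuse the decomposition~\eqref{eq:2DRMHD:split:proof}, keep ${\bf W}_k^n,{\bf \Xi}_1\in{\mathcal G}$ intact, re-run the estimate from Theorem~\ref{theo:RMHD:LLFsplit2Dus} on ${\bf \Xi}_2$ to isolate the residual $-\alpha^{-1}({\vec v}^*\cdot{\vec B}^*)\,\mbox{\rm div}_k^{\mbox{\tiny out}}{\bf B}^n$ (using $\mbox{\rm div}_k^{\mbox{\tiny in}}{\bf B}^n=0$ from local divergence-freeness), and then bound $\mbox{\rm div}_k^{\mbox{\tiny out}}{\bf B}^n=\mbox{\rm div}_k^{\mbox{\tiny out}}{\bf B}^n-\mbox{\rm div}_k^{\mbox{\tiny in}}{\bf B}^n$ by ${\mathcal O}(\Delta)\sum_j|{\mathcal E}_{kj}|$ via the first-order approximation hypothesis. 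Two small wording points: (i) where you write ``is a fixed positive multiple of'' you should say ``is bounded below by'', since the step from~\eqref{eq:wkl000} gives an inequality, not an identity; (ii) ``boundedness of $|{\vec v}^*\cdot{\vec B}^*|$'' is misleading because ${\vec B}^*$ is arbitrary---the ${\mathcal O}(\Delta)$ claim is for each fixed pair $({\vec v}^*,{\vec B}^*)$, which is all the statement asserts.
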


\begin{proof}
	Using the hypothesis
	and the continuity of exact normal magnetic field $\langle \bm \xi_{kj}, {\bf B}({\bf x},t_n)   \rangle$ across ${\mathcal E}_{kj}$ implies that $ \mbox{\rm div} _{k}^{\mbox{\tiny \rm in}} {\vec B}^n =0$, and
\begin{align*}
	& \left\langle \bm \xi_{kj}, {\bf B}_k^n({\bf x}_{kj}^{(\mu)}) \right\rangle = \left\langle \bm \xi_{kj}, {\bf B}({\bf x}_{kj}^{(\mu)},t_n) \right\rangle  + {\mathcal O} (\Delta),
	\\
	&
	\left\langle \bm \xi_{kj}, {\bf B}_j^n({\bf x}_{kj}^{(\mu)}) \right\rangle = \left\langle \bm \xi_{kj}, {\bf B}({\bf x}_{kj}^{(\mu)},t_n) \right\rangle  + {\mathcal O} (\Delta).
\end{align*}
	It follows that
	\begin{align*}
	\left|\mbox{\rm div} _{k}^{\mbox{\tiny \rm out}} {\vec B}^n -
	\mbox{\rm div} _{k}^{\mbox{\tiny \rm in}} {\vec B}^n \right|
	&\le  \sum\limits_{ j \in {\mathcal N}_k} \big| {\mathcal E}_{kj} \big|
	\sum\limits_{ \mu=1}^Q \omega_\mu
	\left| \left\langle \bm \xi_{kj}, {\bf B}_k^n ({\bf x}_{kj}^{(\mu)}) \right\rangle
	-
	\left\langle \bm \xi_{kj}, {\bf B}_j^n ({\bf x}_{kj}^{(\mu)}) \right\rangle
	\right|
	\\
	&= {\mathcal O} (\Delta) \sum_{ j \in {\mathcal N}_k}
	\big| {\mathcal E}_{kj} \big|.
	\end{align*}
	Hence $\mbox{\rm div} _{k} {\vec B}^n = \frac12 \mbox{\rm div} _{k}^{\mbox{\tiny \rm out}} {\vec B}^n= \frac12 \mbox{\rm div} _{k}^{\mbox{\tiny \rm in}} {\vec B}^n + {\mathcal O} (\Delta) \sum_{ j \in {\mathcal N}_k}
	\big| {\mathcal E}_{kj} \big| =  {\mathcal O} (\Delta) \sum_{ j \in {\mathcal N}_k}
	\big| {\mathcal E}_{kj} \big|$, so that ${\bf \Xi}_2$ may not belong to ${\mathcal G}$.
	However, ${\bf \Xi}_2$ is very close to ${\mathcal G}$ in the sense of that the first component of ${\bf \Xi}_2$ is positive, and for any ${\vec B}^*,\vec v^* \in \mathbb{R}^3$ with $|\vec v^*|<1$, it holds
	\begin{align*}
	{\bf \Xi}_2 \cdot \vec n^* + p_{m}^* &\ge
	-\frac{  {\vec v}^* \cdot  {\vec B}^*  }{\alpha{\sum\limits_{ j \in {\mathcal N}_k} | {\mathcal E}_{kj} | }}\sum\limits_{ j \in {\mathcal N}_k}    \left\langle \bm \xi_{kj}, {\bf B}_j^n({\bf x}_{kj}^{(\mu)}) \right\rangle   \big| {\mathcal E}_{kj} \big|
	\\
	&
	= -\frac{  {\vec v}^* \cdot  {\vec B}^*  }{\alpha{\sum\limits_{ j \in {\mathcal N}_k} | {\mathcal E}_{kj} | }} \mbox{\rm div} _{k}^{\mbox{\tiny \rm out}} {\vec B}^n  = -{\mathcal O} (\Delta),
	\end{align*}
	{where the derivation of the inequality is  similar to that of Theorem \ref{theo:RMHD:LLFsplit2Dus}.}
	Because ${\bf \Xi}_1 \in {\mathcal G}$, one concludes from
	\eqref{eq:2DRMHD:split:proof} that $\bar D_{k}^{n+1}>0$ and
	\begin{align*}
	& \bar{\vec U}_{k}^{n+1} \cdot \vec n^* + p_{m}^*
	= (1-2\beta_k) \big({\bf W}_k^n\cdot \vec n^* + p_{m}^* \big)
	\\
	& \quad
	+ 2 (\beta_k-\lambda_k) \big({\bf \Xi}_1 \cdot \vec n^* + p_{m}^* \big)  +  2 \lambda_k \big( {\bf \Xi}_2 \cdot \vec n^* + p_{m}^* \big)
	\\
	&> 2 \lambda_k \big( {\bf \Xi}_2 \cdot \vec n^* + p_{m}^* \big)
	\ge -\frac{ 2 \lambda_k {\vec v}^* \cdot  {\vec B}^*  }{\alpha{\sum\limits_{ j \in {\mathcal N}_k} | {\mathcal E}_{kj} | }} \mbox{\rm div} _{k}^{\mbox{\tiny \rm out}} {\vec B}^n  = -{\mathcal O} (\Delta).
	\end{align*}
	The proof is completed.
\end{proof}

Let us further understand the result in Theorem \ref{thm:PCP:2DRMHD} on two special meshes in two dimension ($d=2$), and show that the conditions \eqref{eq:AdmiCon}--\eqref{eq:AdmiCon2} can be met by a PCP limiter.
In the 2D case, we use the $ Q$-point Gauss quadrature rule for the line integrations in \eqref{eq:quadfulx}.
For the accuracy requirement,   $Q$ should satisfy
$Q \ge K+1$ for a $\mathbb{P}^K$-based DG method \cite{Cockburn0},
or $Q \ge (K+1)/2$ for a $(K+1)$-th order accurate finite volume scheme.

\noindent
{\bf Example 1.} Assume $\mathcal M$ is a uniform rectangular mesh with cells
$\{{\mathcal I}_{ij}=[x_{i-\frac12},x_{i+\frac12}] \times [y_{j-\frac12},y_{j+\frac12}]\}$
and spatial step-sizes $\Delta x$ and $\Delta y$ in $x$- and $y$-directions respectively.
Let $\bar {\bf U}_{ij}^n $ and $\bar {\bf U}_{ij}^n({\bf x})$  denote
the approximate cell-averaged value and polynomial vector over the cell ${\mathcal I}_{ij}$, respectively, and
$ \mathbb{S}_i^x = \{  {x}_i^{(\mu)} \}_{\mu=1}^{ Q}$ and $ \mathbb{S}_j^{y} =  \{  {y}_j^{(\mu)} \}_{\mu=1}^{ Q}$ denote the $ Q$-point Gauss quadrature nodes in the intervals $[ { x}_{i-\frac12}, { x}_{i+\frac12} ]$ and $[ { y}_{j-\frac12}, { y}_{j+\frac12} ]$, respectively.
Then the discrete divergence defined in \eqref{eq:DisDivB:cst} becomes
\begin{align*}
\mbox{\rm div} _{ij} {\bf B}^n & = \Delta x \Delta y
\Bigg( \frac1{\Delta x} {\sum\limits_{\mu=1}^{ Q} \omega_\mu \left(   \overline{(B_1)}_{i+\frac{1}{2},j}^{n,(\mu)}
	- \overline{(B_1)}_{i-\frac{1}{2},j}^{n,(\mu)}  \right)}
\\
& \quad + \frac1{\Delta y} {\sum \limits_{\mu=1}^{ Q} \omega_\mu \left(  \overline{ ( B_2)}_{i,j+\frac{1}{2}}^{n,(\mu)}
	-  \overline{ ( B_2)}_{i,j-\frac{1}{2}}^{n,(\mu)}    \right)} \Bigg), 
\end{align*}
where
\begin{align*}
&\overline{(B_1)}_{i+\frac{1}{2},j}^{n,(\mu)} := \frac12 \Big( (B_1)_{ij}^n ({ x}_{i+\frac12},{ y}_j^{(\mu)}) + (B_1)_{i+1,j}^n ({ x}_{i+\frac12},{ y}_j^{(\mu)}) \Big),
\\
&\overline{ ( B_2) }_{i,j+\frac{1}{2}}^{n,(\mu)}  :=
\frac12 \Big( (B_2)_{ij}^n ({ x}_i^{(\mu)},{ y}_{j+\frac12}) + (B_2)_{i,j+1}^n ({ x}_i^{(\mu)},{ y}_{j+\frac12}) \Big).
\end{align*}
Let $\hat{\mathbb{S}}_i^x=\{ \hat { x}_i^{(\nu) }\}_{\nu=1} ^ { L}$ and $\hat{\mathbb{S}}_j^y=\{ \hat { y}_j^{(\nu)} \}_{\nu=1} ^{L}$ be the $ L$-point Gauss-Lobatto quadrature nodes in the intervals
$[{ x}_{i-\frac{1}{2}},{ x}_{i+\frac{1}{2}}]$ and $[{ y}_{j-\frac{1}{2}},{ y}_{j+\frac{1}{2}} ]$ respectively, and
$ \{\hat \omega_\nu\}_{\nu=1} ^ { L}$ be  associated weights satisfying $\sum_{\nu=1}^{ L} \hat\omega_\nu = 1$, where  ${ L}\ge \frac{{\tt K}+3}2$ such that the
associated quadrature has algebraic precision of at least degree $K$.

As a corollary of Theorem \ref{thm:PCP:2DRMHD}, the following
conclusion improves the result in Ref.\ \cite{WuTangM3AS2017} where two DDF
conditions $\mbox{\rm div} _{ij}^{\mbox{\tiny \rm in}} {\vec B}^n=0$ and $\mbox{\rm div} _{ij}^{\mbox{\tiny \rm out}} {\vec B}^n=0 $ were needed.

\begin{corollary}
	Let the mesh $\mathcal M$ consist of uniform rectangular cells ${\mathcal I}_{ij}$ and
	assume that the polynomial vectors
	$\{{\vec U}_{ij}^n({\bf x})\}$ satisfy
	\begin{gather}\label{eq:DivB:cstM1}
	\mbox{\rm div} _{ij}  {\bf B}^n=0, \quad \forall i,j,
	\\
	\label{eq:AdmiConM1}
	{\vec U}_{ij}^n ({\bf x}) \in {\mathcal G}, \quad \forall {\bf x}\in \mathbb{S}_{ij},~\forall i,j,
	\end{gather}
	where the set $
	\mathbb{S}_{ij}:= \big(\hat{\mathbb{S}}_i^x \otimes {\mathbb{S}}_j^y\big) \cup
	\big({\mathbb{S}}_i^x \otimes \hat{\mathbb{S}}_j^y\big)$ consists of several important quadrature nodes in the cell ${\mathcal I}_{ij}$.
	Then under the CFL type condition
	\begin{equation}\label{eq:CFL:2DRMHDM1}
	0< \alpha \Delta t_n \bigg( \frac{1}{\Delta x} + \frac{1}{\Delta y} \bigg)  < \hat \omega_1,
	\end{equation}
	the solution  $\bar{\vec U}_{ij}^{n+1}$ of the scheme \eqref{eq:2DRMHD:cellaverage} belongs to  ${\mathcal G}$.
\end{corollary}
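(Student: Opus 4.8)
The proof will apply Theorem \ref{thm:PCP:2DRMHD} to each rectangular cell ${\mathcal I}_{ij}$ with the choice $\beta_k=\hat\omega_1$, so the plan is to verify its three hypotheses \eqref{eq:DivB:cst}, \eqref{eq:AdmiCon} and \eqref{eq:AdmiCon2}, together with the CFL condition \eqref{eq:CFL:2DRMHD}. First I would record the elementary geometry: $|{\mathcal I}_{ij}|=\Delta x\Delta y$, the set ${\mathcal N}_{ij}$ consists of the four neighbours, the left and right edges have $(d-1)$-measure $\Delta y$ and carry the flux quadrature nodes $(x_{i\mp\frac12},y_j^{(\mu)})$, while the bottom and top edges have measure $\Delta x$ and carry $(x_i^{(\mu)},y_{j\pm\frac12})$. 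Hence $\sum_{j\in{\mathcal N}_{ij}}|{\mathcal E}_{kj}|=2(\Delta x+\Delta y)$, so $\lambda_k:=\frac{\alpha\Delta t_n}{2|{\mathcal I}_{ij}|}\sum_{j\in{\mathcal N}_{ij}}|{\mathcal E}_{kj}|=\alpha\Delta t_n\big(\tfrac1{\Delta x}+\tfrac1{\Delta y}\big)$, and the condition \eqref{eq:CFL:2DRMHDM1} is exactly $0<\lambda_k<\beta_k=\hat\omega_1$, i.e.\ \eqref{eq:CFL:2DRMHD}. Moreover, \eqref{eq:DivB:cstM1} is literally the quantity $\mbox{\rm div}_{ij}{\bf B}^n$ from \eqref{eq:DisDivB:cst}, hence it is \eqref{eq:DivB:cst}; and since $x_{i\mp\frac12}$ are the endpoint Gauss--Lobatto nodes $\hat x_i^{(1)},\hat x_i^{(L)}$ (and likewise in $y$), every edge quadrature node lies in $\mathbb{S}_{ij}=(\hat{\mathbb{S}}_i^x\otimes\mathbb{S}_j^y)\cup(\mathbb{S}_i^x\otimes\hat{\mathbb{S}}_j^y)$, so \eqref{eq:AdmiConM1} gives \eqref{eq:AdmiCon}.

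The substantive step is verifying \eqref{eq:AdmiCon2}, i.e.\ ${\bf W}_{ij}^n:=\frac1{1-2\hat\omega_1}\big(\bar{\vec U}_{ij}^n-2\hat\omega_1\,{\bf \Xi}_1\big)\in{\mathcal G}$, where ${\bf \Xi}_1$ is the flux-free edge average appearing in the proof of Theorem \ref{thm:PCP:2DRMHD}. Here I would use the tensor-product Gauss--Lobatto/Gauss decomposition of Zhang--Shu type. Because each component of $\vec U_{ij}^n({\bf x})$ is a polynomial of degree $\le K$, the Gauss rule in $y$ (exact up to degree $2Q-1\ge K$) and the $L$-point Gauss--Lobatto rule in $x$ (exact up to degree $2L-3\ge K$ by $L\ge\frac{K+3}2$) yield the two exact identities
\begin{align*}
\bar{\vec U}_{ij}^n=\sum_{\mu=1}^Q\omega_\mu\sum_{\nu=1}^L\hat\omega_\nu\,\vec U_{ij}^n\big(\hat x_i^{(\nu)},y_j^{(\mu)}\big)=\sum_{\mu=1}^Q\omega_\mu\sum_{\nu=1}^L\hat\omega_\nu\,\vec U_{ij}^n\big(x_i^{(\mu)},\hat y_j^{(\nu)}\big).
\end{align*}
Forming the convex combination of these with weight $\frac{\Delta y}{\Delta x+\Delta y}$ on the $x$-Gauss--Lobatto representation and $\frac{\Delta x}{\Delta x+\Delta y}$ on the $y$-one, and then separating the endpoint terms $\nu=1$ and $\nu=L$ (common weight $\hat\omega_1$; their nodes are precisely the left/right, resp.\ bottom/top, edge quadrature points) from the interior ones, one obtains
\begin{align*}
\bar{\vec U}_{ij}^n=2\hat\omega_1\,{\bf \Xi}_1+(1-2\hat\omega_1)\,{\bf W}_{ij}^n,
\end{align*}
with ${\bf W}_{ij}^n$ a convex combination of the interior tensor nodes $\vec U_{ij}^n(\hat x_i^{(\nu)},y_j^{(\mu)})$ and $\vec U_{ij}^n(x_i^{(\mu)},\hat y_j^{(\nu)})$, $2\le\nu\le L-1$, all lying in $\mathbb{S}_{ij}$. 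Then \eqref{eq:AdmiConM1} and the convexity of ${\mathcal G}$ (Theorem \ref{theo:RMHD:convex}) give ${\bf W}_{ij}^n\in{\mathcal G}$, which is \eqref{eq:AdmiCon2}.

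The step I expect to need the most care is the bookkeeping inside this last decomposition: one has to check that the total weight on the edge quadrature nodes equals exactly $2\hat\omega_1$, and that node by node the coefficient of each edge value matches $2\hat\omega_1$ times its coefficient in ${\bf \Xi}_1$ — this is where the convex-combination ratios $\frac{\Delta y}{\Delta x+\Delta y}$ and $\frac{\Delta x}{\Delta x+\Delta y}$ must cancel against the factors $\frac{\Delta y}{2(\Delta x+\Delta y)}$ and $\frac{\Delta x}{2(\Delta x+\Delta y)}$ that appear in ${\bf \Xi}_1$ on this mesh. Once this is confirmed, all hypotheses of Theorem \ref{thm:PCP:2DRMHD} hold and $\bar{\vec U}_{ij}^{n+1}\in{\mathcal G}$. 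In the degenerate case $L=2$ (which, under $L\ge\frac{K+3}2$, occurs only for $K\in\{0,1\}$) one has $\hat\omega_1=\frac12$, the interior sum is empty, the decomposition collapses to $\bar{\vec U}_{ij}^n={\bf \Xi}_1$, and I would instead invoke the variant of Theorem \ref{thm:PCP:2DRMHD} pointed out in the remark following it, i.e.\ with $\beta_k=\frac12$ and \eqref{eq:AdmiCon3} in place of \eqref{eq:AdmiCon2}.
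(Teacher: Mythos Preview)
Your proposal is correct and follows essentially the same approach as the paper: reduce to Theorem \ref{thm:PCP:2DRMHD} with $\beta_k=\hat\omega_1$, check that the edge quadrature nodes lie in $\mathbb{S}_{ij}$ via the Gauss--Lobatto endpoints, and obtain \eqref{eq:AdmiCon2} from the Zhang--Shu tensor-product decomposition with the $\frac{\Delta y}{\Delta x+\Delta y}$, $\frac{\Delta x}{\Delta x+\Delta y}$ convex combination. Your explicit treatment of the CFL identity $\lambda_k=\alpha\Delta t_n(\tfrac1{\Delta x}+\tfrac1{\Delta y})$ and of the degenerate $L=2$ case (handled via the remark after Theorem \ref{thm:PCP:2DRMHD}) are details the paper leaves implicit, but the argument is otherwise the same.
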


\begin{proof}
	It only needs to verify that \eqref{eq:AdmiConM1} can ensure the conditions \eqref{eq:AdmiCon} and \eqref{eq:AdmiCon2} in Theorem \ref{thm:PCP:2DRMHD}.
	
	Note that $\hat x_i^{(1)}=x_{i-\frac12}$ and $\hat x_i^{(L)}=x_{i+\frac12}$,
	which implies that
	$\{x_{i-\frac12},x_{i+\frac12}\} \otimes {\mathbb{S}}_j^y \subseteq  \mathbb{S}_{ij}.$
	Similarly, one has $
	{\mathbb{S}}_i^x \otimes \{y_{j-\frac12},y_{j+\frac12}\} \subseteq  \mathbb{S}_{ij}.
	$
	Thus the condition \eqref{eq:AdmiConM1} implies \eqref{eq:AdmiCon}.
	
	The following shows that the condition \eqref{eq:AdmiCon2} can be met for $\beta_k=\hat \omega_1$.
	Using the exactness of the Gauss-Lobatto quadrature rule with $ L$ nodes and the Gauss quadrature rule with $ Q$ nodes for the polynomials of degree $ K$,
	one can decompose (cf. \cite{WuTangM3AS2017} for details) the cell average value into
	\begin{equation*} 
	\bar{\bf U}_{ij}^n = \sum_{\nu=2}^{L-1} \hat \omega_\nu   {\bf \Pi}_{\nu} + 2 \hat \omega_1 {\bf \Pi}_1,
	\end{equation*}
	where $\hat \omega_1 = \hat \omega_{ L}$ has been used, and
	\begin{align*}
	&{\bf \Pi}_{\nu} :=  \sum \limits_{\mu = 1}^{Q}  \omega_\mu \bigg(
	\frac{\Delta y}{\Delta x+ \Delta y}    {\bf U}_{ij}^n\big(\hat { x}_i^{(\nu)},{y}_j^{(\mu)}\big) + \frac{\Delta x}{\Delta x+ \Delta y}   {\bf U}_{ij}^n\big( { x}_i^{(\mu)},\hat { y}_j^{(\nu)} \big) \bigg),~2\le \nu < L,
	\\
	&
	{\bf \Pi}_1 := \frac{1}{2(\Delta x + \Delta y)} \bigg(
	\Delta y \sum \limits_{\mu = 1}^{\tt Q}  \omega_\mu {\bf U}_{ij}^n\big({ x}_{i-\frac12},{y}_j^{(\mu)}\big)
	+ \Delta y \sum \limits_{\mu = 1}^{\tt Q}  \omega_\mu {\bf U}_{ij}^n\big({ x}_{i+\frac12},{y}_j^{(\mu)}\big)
	\\
	& \qquad \quad +\Delta x \sum \limits_{\mu = 1}^{\tt Q}  \omega_\mu {\bf U}_{ij}^n\big( { x}_i^{(\mu)}, { y}_{j-\frac12} \big)
	+  \Delta x \sum \limits_{\mu = 1}^{\tt Q}  \omega_\mu
	{\bf U}_{ij}^n\big( { x}_i^{(\mu)}, { y}_{j+\frac12} \big)
	\bigg).
	\end{align*}
	Thanks to the convexity of $\mathcal G$, one has ${\bf \Pi}_{\nu} \in {\mathcal G}$ by \eqref{eq:AdmiConM1}.
	Hence there exists a constant $\beta_k = \hat \omega_1$ such that
	$$
	{\bf W}_{ij}^n:=\frac{1}{1-2\beta_k} \left( \bar{\bf U}_{ij}^n - 2 \beta_k {\bf \Pi}_1 \right)
	=
	\sum_{\nu=2}^{L-1} \frac{\hat \omega_\nu}{1-2\hat \omega_1}   {\bf \Pi}_{\nu},
	$$
	which belongs to $\mathcal G$. Hence \eqref{eq:AdmiCon2} is satisfied for $\beta_k=\hat \omega_1$.
	Using Theorem \ref{thm:PCP:2DRMHD} completes the proof.
\end{proof}

\noindent
{\bf Example 2.} Assume that $\mathcal M$ is a 2D triangular mesh.
Following the approach in Ref.\ \cite{zhang2012}, one
can decompose the cell average $\bar{\bf U}_k^n$
into a convex combination of point values of the polynomial  ${\bf U}_k^n({\bf x})$
by a 2D quadrature satisfying:
\begin{itemize}
	\item The quadrature rule is with positive weights and exact for integration of ${\bf U}_k^n({\bf x})$ in the cell ${\mathcal I}_k$.
	\item The set of the 2D quadrature nodes, denoted by $\mathbb{S}_k$, should include
	all the Gauss quadrature nodes ${\bigcup} _{j\in{\mathcal N}_{k}} \{ {\bf x}_{kj}^{(\mu)},\mu=1,\cdots,Q \}$ on the edges of ${\mathcal I}_k$.
\end{itemize}
A qualified 2D quadrature rule was constructed in Ref.\ \cite{zhang2012}
and summarized below.
Let $\{\zeta_\mu \}_{\mu=1}^Q$ denote the Gauss quadrature nodes on $[-\frac12,\frac12]$
and $\{\hat \zeta_\nu \}_{\nu=1}^L$ be the Gauss-Lobatto quadrature nodes on $[-\frac12,\frac12]$, and $\{\hat \omega_\nu\}_{\nu=1} ^ { L}$ be  associated weights satisfying $\sum_{\nu=1}^{ L} \hat\omega_\nu = 1$, where  ${ L}\ge \frac{{\tt K}+3}2$ such that the
associated quadrature has algebraic precision of at least degree $K$.
Then the set of local barycentric coordinates of the 2D quadrature nodes in $\mathbb{S}_k$ is
given by
\begin{align*}
& \Bigg\{
\bigg( \frac12 + \zeta_\mu, (\frac12 + \hat\zeta_\nu)(\frac12 - \zeta_\mu),
(\frac12-\hat \zeta_{\nu})(\frac12 - \zeta_{\mu})   \bigg), \\
& \quad
\bigg( (\frac12 - \hat\zeta_\nu)(\frac12 - \zeta_\mu), \frac12 + \zeta_\mu,
(\frac12+\hat \zeta_{\nu})(\frac12 - \zeta_{\mu})   \bigg),
\\
&
\quad
\bigg( (\frac12 + \hat\zeta_\nu)(\frac12 - \zeta_\mu),
(\frac12-\hat \zeta_{\nu})(\frac12 - \zeta_{\mu}) , \frac12 + \zeta_\mu  \bigg),
1\le \mu \le Q, 1\le \nu \le L
\Bigg\}.
\end{align*}
The cell average value $\bar{\bf U}_k^n$ can be written as
\begin{equation}\label{eq:tridecop}
\begin{split}
\bar{\bf U}_k^n &= \frac{1}{|{\mathcal I}_k|} \int_{{\mathcal I}_{k}}
{\bf U}_k^n({\bf x}) {d {\bf x}}
= \sum_{ {\bf x} \in \mathbb{S}_k }  \varpi_{\bf x}  {\bf U}_k^n({\bf x})
\\
&
= \frac{2}{3} \hat \omega_1 \sum_{j \in {\mathcal N}_k}  \bigg( \sum_{ \mu =1 }^Q \omega_\mu {\bf U}_k^n ({\bf x}_{kj}^{(\mu)}) \bigg)
+ \sum_{ {\bf x} \in \mathbb{S}_k^{\mbox{\tiny \rm int}} }  \varpi_{\bf x}  {\bf U}_k^n({\bf x}),
\end{split}
\end{equation}
where  $\varpi_{\bf x}$ is the quadrature weight for
the node ${\bf x} \in \mathbb{S}_k $,  and $\mathbb{S}_k^{ \mbox{\tiny \rm int} } $
is the set of the points in
$\mathbb{S}_k$ that lie in
the interior of ${\mathcal I}_k$. The specific expressions of $\varpi_{\bf x}$
for ${\bf x} \in \mathbb{S}_k^{ \mbox{\tiny \rm int} } $ are omitted here, because we only use $\varpi_{\bf x}>0$ and $\sum_{ {\bf x} \in \mathbb{S}_k^{\mbox{\tiny \rm int}} }  \varpi_{\bf x}=1-2 \hat \omega_1$.

As a corollary of Theorem \ref{thm:PCP:2DRMHD}, the following conclusion holds.

\begin{corollary}
	Let the mesh $\mathcal M$ consist of triangular cells ${\mathcal I}_{k}$.
	If the polynomial vectors
	$\{{\vec U}_{k}^n({\bf x})\}$ satisfy the DDF condition \eqref{eq:DivB:cst},
	and
	\begin{gather}
	\label{eq:AdmiConM2}
	{\vec U}_{k}^n ({\bf x}) \in {\mathcal G}, \quad \forall {\bf x}\in \mathbb{S}_{k},~\forall k,
	\end{gather}
	then under the CFL type condition
	\begin{equation}\label{eq:CFL:2DRMHDM2}
	0<    \frac{ \alpha \Delta t_n }{2 |{\mathcal I}_k|  } \sum_{j \in {\mathcal N}_k} |{\mathcal E}_{kj}|  < \beta_k,
	\end{equation}
	the solution  $\bar{\vec U}_{k}^{n+1}$ of the scheme \eqref{eq:2DRMHD:cellaverage} belongs to  ${\mathcal G}$, where
	\begin{equation}\label{eq:def:betak}
	\beta_k:=\frac{\hat \omega_1}{3} \times
	\frac{ \sum_{j \in {\mathcal N}_k} |{\mathcal E}_{kj}| }{ \max_{j \in {\mathcal N}_k} |{\mathcal E}_{kj}| } \in \left( \frac{2\hat \omega_1}{3}, \hat \omega_1  \right].
	\end{equation}
\end{corollary}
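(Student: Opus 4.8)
The plan is to derive the corollary directly from Theorem~\ref{thm:PCP:2DRMHD}, by verifying on each triangular cell ${\mathcal I}_k$ that the three hypotheses \eqref{eq:DivB:cst}, \eqref{eq:AdmiCon} and \eqref{eq:AdmiCon2} of that theorem hold once \eqref{eq:DivB:cst} and \eqref{eq:AdmiConM2} are assumed and $\beta_k$ is taken as in \eqref{eq:def:betak}. The DDF condition \eqref{eq:DivB:cst} is assumed outright. For \eqref{eq:AdmiCon} I would invoke the second defining property of the $2$D quadrature rule, namely that the node set $\mathbb{S}_k$ contains every edge Gauss node $\bigcup_{j\in{\mathcal N}_k}\{{\bf x}_{kj}^{(\mu)},\mu=1,\dots,Q\}$; hence \eqref{eq:AdmiConM2} immediately yields ${\vec U}_k^n({\bf x}_{kj}^{(\mu)})\in{\mathcal G}$ for all $\mu$ and all $j\in{\mathcal N}_k$.

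The substantive step is to exhibit the constant $\beta_k$ satisfying \eqref{eq:AdmiCon2}. Starting from the quadrature decomposition \eqref{eq:tridecop} of the cell average and writing $G_{kj}:=\sum_{\mu=1}^Q\omega_\mu{\vec U}_k^n({\bf x}_{kj}^{(\mu)})$, one regroups
\begin{align*}
&\bar{\bf U}_k^n-\frac{2\beta_k}{\sum_{j\in{\mathcal N}_k}|{\mathcal E}_{kj}|}\sum_{j\in{\mathcal N}_k}|{\mathcal E}_{kj}|\,G_{kj}
\\
&\qquad=\sum_{j\in{\mathcal N}_k}\left(\frac{2\hat\omega_1}{3}-\frac{2\beta_k|{\mathcal E}_{kj}|}{\sum_{j'\in{\mathcal N}_k}|{\mathcal E}_{kj'}|}\right)G_{kj}
+\sum_{{\bf x}\in\mathbb{S}_k^{\mbox{\tiny \rm int}}}\varpi_{\bf x}\,{\vec U}_k^n({\bf x}).
\end{align*}
By the convexity of ${\mathcal G}$ (Theorem~\ref{theo:RMHD:convex}) together with \eqref{eq:AdmiCon}, each $G_{kj}\in{\mathcal G}$; and each ${\vec U}_k^n({\bf x})$ with ${\bf x}\in\mathbb{S}_k^{\mbox{\tiny \rm int}}\subset\mathbb{S}_k$ lies in ${\mathcal G}$ by \eqref{eq:AdmiConM2}. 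The coefficient of $G_{kj}$ is nonnegative for every $j$ exactly when $\beta_k\le\frac{\hat\omega_1}{3}\cdot\frac{\sum_{j'\in{\mathcal N}_k}|{\mathcal E}_{kj'}|}{|{\mathcal E}_{kj}|}$ for all $j$, i.e.\ when $\beta_k\le\frac{\hat\omega_1}{3}\cdot\frac{\sum_{j'\in{\mathcal N}_k}|{\mathcal E}_{kj'}|}{\max_{j\in{\mathcal N}_k}|{\mathcal E}_{kj}|}$, which is precisely the choice \eqref{eq:def:betak} (the bound being attained on the longest edge). Since ${\mathcal I}_k$ has three edges and $\sum_{{\bf x}\in\mathbb{S}_k^{\mbox{\tiny \rm int}}}\varpi_{\bf x}=1-2\hat\omega_1$, the coefficients above sum to $2\hat\omega_1-2\beta_k+(1-2\hat\omega_1)=1-2\beta_k$; hence the bracket in \eqref{eq:AdmiCon2} is $(1-2\beta_k)$ times a convex combination of admissible states, so ${\bf W}_k^n\in{\mathcal G}$. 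Finally the triangle inequality gives $\sum_{j\in{\mathcal N}_k}|{\mathcal E}_{kj}|>2\max_{j\in{\mathcal N}_k}|{\mathcal E}_{kj}|$, so that $\beta_k\in(\frac{2\hat\omega_1}{3},\hat\omega_1]$ as stated, and $\beta_k\in(0,\frac12)$ since $\hat\omega_1\le\frac12$ (the borderline case $\beta_k=\frac12$, arising only when $L=2$ and ${\mathcal I}_k$ is equilateral, being covered by the remark after Theorem~\ref{thm:PCP:2DRMHD} via \eqref{eq:AdmiCon3}).

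With \eqref{eq:DivB:cst}, \eqref{eq:AdmiCon} and \eqref{eq:AdmiCon2} established, Theorem~\ref{thm:PCP:2DRMHD} applies verbatim: under \eqref{eq:CFL:2DRMHDM2}, which is \eqref{eq:CFL:2DRMHD} with the present $\beta_k$, the updated cell average $\bar{\vec U}_k^{n+1}$ of \eqref{eq:2DRMHD:cellaverage} belongs to ${\mathcal G}$, which completes the proof. The one place needing care is the regrouping displayed above: the decomposition \eqref{eq:tridecop} assigns each of the three edges the same weight $\frac{2}{3}\hat\omega_1$, whereas \eqref{eq:AdmiCon2} removes a length-weighted edge average, and matching the two is what forces the extra factor $\sum_{j\in{\mathcal N}_k}|{\mathcal E}_{kj}|/(3\max_{j\in{\mathcal N}_k}|{\mathcal E}_{kj}|)$ into $\beta_k$; everything else reduces to the bookkeeping already carried out in the proof of Theorem~\ref{thm:PCP:2DRMHD}.
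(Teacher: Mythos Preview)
Your proof is correct and follows essentially the same approach as the paper: both verify the hypotheses of Theorem~\ref{thm:PCP:2DRMHD} by observing that the edge Gauss nodes lie in $\mathbb{S}_k$ and then use the decomposition \eqref{eq:tridecop} to write $(1-2\beta_k){\bf W}_k^n$ as a nonnegative combination of admissible states, with the coefficient of the $j$-th edge term equal to $\frac{2\hat\omega_1}{3}\big(1-|{\mathcal E}_{kj}|/\max_{j'}|{\mathcal E}_{kj'}|\big)$. Your version includes a few extra details the paper leaves implicit (the coefficient sum check, the triangle-inequality justification of the range of $\beta_k$, and the $\beta_k=\tfrac12$ borderline case), but the argument is the same.
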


\begin{proof}
	It only needs to show that the conditions \eqref{eq:AdmiCon} and \eqref{eq:AdmiCon2} in Theorem \ref{thm:PCP:2DRMHD} are ensured by \eqref{eq:AdmiConM2} for the constant $\beta_k$.
	Because ${\bigcup}_{j\in{\mathcal N}_{k}} \{ {\bf x}_{kj}^{(\mu)},\mu=1,\cdots,Q \} \subseteq \mathbb{S}_{k}$, the condition \eqref{eq:AdmiConM2} implies \eqref{eq:AdmiCon}.
	The following verifies the condition \eqref{eq:AdmiCon2} for $\beta_k$ in \eqref{eq:def:betak}. Denote ${\mathcal E}_k^\infty:= \max_{j \in {\mathcal N}_k} |{\mathcal E}_{kj}|$, then using \eqref{eq:tridecop} gives
	\begin{align*}
	(1-2\beta_k){\bf W}_k^n & =\bar{\bf U}_k^n - \frac{ 2 \hat \omega_1 }{ 3 {\mathcal E}_k^\infty }
	\sum_{j \in {\mathcal N}_k} |{\mathcal E}_{kj}| \bigg( \sum_{ \mu =1 }^Q \omega_\mu {\bf U}_k^n ({\bf x}_{kj}^{(\mu)}) \bigg)
	\\
	& = \frac{2}{3} \hat \omega_1 \sum_{j \in {\mathcal N}_k}
	\bigg(1-\frac{ |{\mathcal E}_{kj}| }{{\mathcal E}_k^\infty} \bigg)
	\bigg( \sum_{ \mu =1 }^Q \omega_\mu {\bf U}_k^n ({\bf x}_{kj}^{(\mu)}) \bigg)
	+ \sum_{ {\bf x} \in \mathbb{S}_k^{\mbox{\tiny \rm int}} }  \varpi_{\bf x}  {\bf U}_k^n({\bf x}),
	\end{align*}
	which implies that ${\bf W}_k^n$ is a convex combination of several admissible states.
	Hence ${\bf W}_k^n \in {\mathcal G}$ by the convexity of $\mathcal G$. The proof is completed.
\end{proof}

\begin{remark}
	It can be similarly shown that the bound-preserving high-order methods in Ref.\ \cite{zhang2012} allow a CFL condition like \eqref{eq:CFL:2DRMHDM2}, which is milder
	than that used in Ref.\ \cite{zhang2012} (corresponding to $\beta_k = \hat{ \omega}_1/3$ here), see also the derivation in Ref.\ \cite{Zhang2017}.
\end{remark}

For an arbitrary 2D mesh consisting of non-self-intersecting polygons, one can
subdivide ${\mathcal I}_k$ into several non-overlapping sub-domains such
that those subdomains which share edges with $\{{\mathcal E}_{kj}\}_{j \in {\mathcal N}_k}$
are all triangles.
Then a 2D quadrature rule can be constructed to decompose the cell average $\bar{\bf U}_k^n$
into a convex combination of point values of the polynomial  ${\bf U}_k^n({\bf x})$
at some quadrature nodes ${\mathbb{S}}_k$,
so that the conditions \eqref{eq:AdmiCon} and \eqref{eq:AdmiCon2}
in Theorem \ref{thm:PCP:2DRMHD} can be ensured under an admissibility condition like \eqref{eq:AdmiConM2}. The details are similar to those in Ref.\ \cite{DuShu2017}
and omitted here.


The condition \eqref{eq:AdmiConM2} or \eqref{eq:AdmiConM1} can be enforced by a PCP limiting procedure, in which the polynomial vector
${\vec U}^n_k({\bf x})$ is limited as $\widetilde{\vec U}_k({\bf x})$ such that $\widetilde{\vec U}_k ( {\bf x} ) \in {\mathcal G} $ for all ${\bf x} \in \mathbb{S}_k$.
To avoid the effect of the rounding error, we introduce a sufficiently small positive number  $\epsilon$ such that $ \bar {\vec U}_k^n \in {\mathcal G}_\epsilon$, where
\begin{align} \label{set-G-epsilon}
{\mathcal G}_\epsilon = \left\{   \vec U=(D,\vec m,\vec B,E)^{\top} \Big|  D\ge\epsilon,~q(\vec U)\ge\epsilon,~{\Psi}_\epsilon(\vec U) \ge 0\right\},
\end{align}
with
$$
{\Psi}_\epsilon(\vec U) : = {\Psi}(\vec U_\epsilon),\quad \vec U_\epsilon : = \big(D,\vec m,\vec B,E-\epsilon\big)^{\top}.
$$
%
%
%
Then the PCP limiting procedure is divided into the following three steps.

\noindent
{\bf Step (i)}: Enforce the positivity of $D(\vec U)$. Let $D_{\min} = \min \limits_{ {\bf x} \in {\mathbb S}_k}^{} D_k^n ( {\bf x} )$.
If $D_{\min} < \epsilon$,
then  $D_k^n ( {\bf x} )$ is limited as
$$
\hat D_k({\bf x}) = \theta_1 \big( D_k^n({\bf x}) - \bar D_k^n \big) + \bar D_k^n,
$$
where $\theta_1 = (\bar D_k^n - \epsilon)/ ( \bar D_k^n - D_{\min} ) $. Otherwise, take $\hat D_k({\bf x}) =  D_k^n({\bf x})$.
Denote
$$\hat {\vec U}_k({\bf x}) :=  \left( \hat D_k({\bf x}), \vec m_k^n({\bf x}),\vec B_k^n({\bf x}), E_k^n({\bf x}) \right)^{\top}.$$

\noindent
{\bf Step (ii)}: Enforce the positivity of $q(\vec U)$. Let
$q_{\min} = \min \limits_{{\bf x} \in {\mathbb S}_k}^{} q(\hat {\vec U}_k ( {\bf x} ))$. If $q_{\min} < \epsilon$, then limit $\hat {\vec U}_k ( {\bf x} )$ as
\begin{align*}
\check {\vec U}_k({\bf x}) &= \Big(  \theta_2 \big( \hat {D}_k ({\bf x}) - \bar {D}_k^n \big) + \bar {D}_k^n,
\theta_2 \big(  {\vec m}_k^n ({\bf x}) - \bar {\vec m}_k^n \big) + \bar {\vec m}_k^n,
\\
& \qquad
{\vec B}_k^n ({\bf x}) ,
\theta_2 \big(  {E}_k^n ({\bf x}) - \bar {E}_k^n \big) + \bar {E}_k^n \Big)^\top,
\end{align*}
where $\theta_2 = (q(\bar {\vec U}_k^n) - \epsilon)/ ( q(\bar {\vec U}_k^n) - q_{\min} ) $. Otherwise, set $\check {\vec U}_k({\bf x}) =  \hat {\vec U}_k({\bf x})$.

\noindent
{\bf Step (iii)}: Enforce the positivity of ${\Psi}(\vec U)$.
For each ${\bf x} \in {\mathbb S}_k$, if ${\Psi}_\epsilon ( \check {\vec U}_k({\bf x}) ) < 0$, then define $\tilde \theta({\bf x})$ by solving the nonlinear equation
\begin{equation}\label{eq:limiterEq}
{\Psi}_\epsilon \Big( (1- \tilde  \theta) \bar{\vec U}_k^n + \tilde  \theta \check {\vec U}_k({\bf x}) \Big) =0, \quad  \tilde  \theta \in [0,1).
\end{equation}
Otherwise,
set $\tilde  \theta({\bf x})=1$. Let $\theta_3 = \min \limits_{{\bf x}\in {\mathbb S}_k} \{\tilde  \theta({\bf x})\}$  and
\begin{equation}\label{eq:PCPpolynomial}
\widetilde {\vec U}_k({\bf x}) = \theta_3 \big( \check {\vec U}_k ({\bf x}) - \bar {\vec U}_k^n \big) + \bar {\vec U}_k^n.
\end{equation}

\begin{remark}
	The above analyses and proofs  can be directly applied to
	any other system if the admissible state set of the system
	has the generalized LxF splitting property
	and convexity. {Therefore, the  analyses can also be applied} to
	the numerical schemes for the RHDs {on general meshes}. For the RHDs which correspond to
	RMHDs with zero magnetic field, the above theorems and corollaries
	hold without the DDF condition.
\end{remark}

\section{Conclusions}\label{sec:con}

The ideal gas EOS with a constant adiabatic index is a poor approximation for most relativistic astrophysical flows, although it is commonly used in {the relativistic magnetohydrodynamics (RMHD)}.
The paper   extended the physical-constraints-preserving (PCP) analysis in Ref.\ \cite{WuTangM3AS2017}
to the numerical schemes for the multi-dimensional RMHDs with a general EOS  on a general mesh
(with  non-self-intersecting polytopes).
It was built on several important properties of the admissible state set,
whose derivations for a general EOS was nontrivial and partly involved analytically handling a EOS function without a specific expression.
Based on those properties, we   provided rigorous PCP analyses of finite volume or discontinuous Galerkin schemes with the Lax-Friedrichs type flux.
The results  showed that in the general case,
there also existed a divergence-free condition in discrete sense that was critical
for designing the PCP schemes.
Moreover, the discrete divergence-free condition was  proposed {on general meshes} and milder than that in Ref.\ \cite{WuTangM3AS2017}.
It {is} worth emphasizing that the present analyses  can be directly applied to the relativistic hydrodynamic equations (without  magnetic field).

Our future work will include numerical experiments to further demonstrate
the theoretical analyses, and
the exploration of numerical techniques which can meet the discrete divergence-free condition
and also work in conjunction with the PCP limiter.
%

\end{document}